\theoremstyle{plain} 
\newtheorem{theorem}{Theorem}[section]
\newtheorem{prop}{Proposition}[section]
\newtheorem{corollary}{Corollary}[section]
\newtheorem{conjecture}{Conjecture}[section]
\newtheorem{observation}{Observation}[section]
\newtheorem{definition}{Definition}[section]
\def\refeq#1{\if\workingver y(\ref{#1})-[[#1]]\else(\ref{#1})\fi}
\def\refth#1{\if\workingver y\ref{#1}-[[#1]]\else\ref{#1}\fi}
\def\mylabel#1{\if\workingver y\label{#1}{\bf\ \ [[#1]]\ \ }\else\label{#1}\fi}
\def\mybibitem#1{\if\workingver y\bibitem{#1}{\bf\ \ [[#1]]\ \
}\else\bibitem{#1}\fi}
\renewcommand{\emptyset}{\varnothing}
\renewcommand{\rho}{\varrho}
\renewcommand{\phi}{\varphi}
\renewcommand{\epsilon}{\varepsilon}
\def\cV{\text{$\mathcal V$}}
\def\bE{\text{$\mathbf E$}}
\def\be{\text{$\mathbf e$}}
\newcommand{\id}{\operatorname{id}}
\newcommand{\cl}{\operatorname{cl}}
\renewcommand{\emptyset}{\varnothing}
\def\begeq#1{\begin{equation}\mylabel{#1}}
\def\endeq{\end{equation}}
\def\mathobj#1{\mbox{$#1$}}
\def\CC{\mathobj{\mathbf{C}}}
\def\FF{\mathobj{\mathbf{C}}}
\def\II{\mathobj{\mathbb{I}}}
\def\PP{\mathobj{P}}
\def\ZZ{\mathobj{\mathbb{Z}}}
\def\scalprod#1{\langle #1 \rangle}
\def\implies{\;\Rightarrow\;}
\def\setof#1{\mbox{$\{\,#1\,\}$}}
\def\0#1{\hbox{\kern25pt}$ #1 $\\}
\def\1#1{\hbox{\kern40pt}$ #1 $\\}
\def\2#1{\hbox{\kern55pt}$ #1 $\\}
\def\3#1{\hbox{\kern70pt}$ #1 $\\}
\newcounter{li}
\def\begalg#1{\begin{algo}\mylabel{#1}\normalshape:\small\baselineskip 10pt\\}
\def\endalg{\end{algo}}
\def\Figures(include=#1,cat=#2){
  \renewcommand{\textfraction}{.20}
  \renewcommand{\topfraction}{.80}
  \renewcommand{\bottomfraction}{.80}
  \renewcommand{\floatpagefraction}{.80}
  \newcount\figcount
  \figcount=0
  \let\includefigures=#1
  \def\figcat{#2}
}
\def\FigureFromFile[#1][#2](#3)#4
\def\FigureFromFileTwoD[#1][#2,#3](#4)#5
\def\FigureF<#1>[#2](#3)#4
\def\Figure[#1](#2)#3
\def\0{\hbox{\kern5pt}}
\def\1{\hbox{\kern20pt}}
\def\2{\hbox{\kern35pt}}
\def\3{\hbox{\kern50pt}}
\def\4{\hbox{\kern65pt}}
\def\5{\hbox{\kern80pt}}
\def\6{\hbox{\kern95pt}}
\DeclareMathOperator{\out}{out}
\newcommand{\gr}{{\sf gr}}
\newcommand{\homo}{H}
\newcommand{\mv}{\mathcal{V}}
\newcommand{\pgrad}[2]{\gr_{#1}(#2)}
\newcommand{\compl}[1]{{#1}^*}
\definecolor{yellow}{RGB}{255,225,55}
\newcommand{\tamal}[1]{\textcolor{blue}{#1}}
\newcommand\michal[1]{\textcolor{ForestGreen}{[ML: #1]}}
\newcommand\todo[1]{\textcolor{red}{[TODO: #1]}}
\newcommand{\low}{{\tt low}}
\newcommand{\leqlin}{\mathbin{\leq_{\mbox{{\scriptsize lin}}}}}
\newcommand{\Poset}{P}
\newcommand{\md}{\mathcal{M}}
\newcommand{\pers}{\mathrm{pers}}
\definecolor{dark-gray}{RGB}{64,64,64}
\definecolor{medium-gray}{RGB}{114,114,114}
\definecolor{light-gray}{RGB}{190,190,190}
\newcommand{\cancel}[1]
\title{Computing a Connection Matrix and Persistence Efficiently from a Morse Decomposition}
\author{Tamal K. Dey\thanks{Department of Computer Science, Purdue University, West Lafayette, Indiana, USA. \texttt{tamaldey@purdue.edu}}
\and Andrew Haas\thanks{Department of Computer Science, Purdue University, West Lafayette, Indiana, USA. \texttt{haas60@purdue.edu}}
\and Micha\l{} Lipi\'nski\thanks{Institute of Science and Technology, Austria.
\texttt{michal.lipinski@ist.ac.at}}
}
\begin{document}

\maketitle

\begin{abstract}
Morse decompositions partition the flows in a vector field into equivalent
structures. Given such a decomposition, one can define a further summary
of its flow structure by what is called a connection matrix.
These matrices, a generalization of Morse boundary operators from classical Morse theory, capture the connections made by the flows among the critical structures - such as attractors, repellers, and orbits - in a vector field. Recently, in the context of combinatorial dynamics, an efficient persistence-like algorithm to compute connection matrices 
has been proposed in~\cite{DLMS24}. We show that, actually, the classical persistence algorithm with exhaustive reduction retrieves connection matrices, both simplifying the
algorithm of~\cite{DLMS24} and bringing the theory of persistence closer to combinatorial dynamical systems. We supplement this main result with an observation:
the concept of persistence as defined for scalar fields naturally adapts to
Morse decompositions whose Morse sets are filtered with a Lyapunov function.
We conclude by presenting preliminary experimental results.

\end{abstract}
\section{Introduction}

A connection matrix is an algebraic summary of the connections between (isolated) invariant sets in a dynamical system. First proposed by R. Franzosa~\cite{Fr1986,Fr1989} for heteroclinic connections in dynamical systems, connection matrices generalize the concept of boundary homomorphims of Morse complexes in classical Morse theory~\cite{K2015}, wherein the flows between critical points of a Morse function defined on a smooth manifold are studied. A \emph{Morse complex} is a chain complex whose $i$th chain group is spanned by the critical points of index $i$ and whose boundary homomorphism is constructed by counting the number of homotopically non-equivalent paths between critical points. A connection matrix represents this boundary homomorphism.

Figure~\ref{fig:VFandCM} (left) illustrates a connection matrix under $\ZZ_2$ coefficients of a gradient flow on a sphere induced by a presumed Morse function. 
Four critical points - repellers $A$ and $B$, saddle $C$, and attractor $D$ - give rise to a $4 \times 4$ matrix wherein an entry indexed with $X,Y \in \{A,B,C,D\}$ (first by column, then by row) is non-zero if there are an odd number of homotopically unique trajectories from $X$ to $Y$ and $X$ is of one higher Morse index than $Y$. 
In this example, single unique (up to homotopy) trajectories exist from repellers $A$ and $B$ to $C$, whereas two trajectories exist from $C$ to~$D$. Figure~\ref{fig:VFandCM} (right) illustrates a similar connection matrix of a gradient flow on a disk with three critical points.

The Morse theory for gradient vector fields has been extended by Conley~\cite{Co78}
by replacing Morse functions on smooth manifolds with flows on compact metric spaces.
In this setting critical points are replaced by isolated invariant sets, called \emph{Morse sets}, encapsulating the recurrent components of the flow.
The distinguished Morse sets constitute a \emph{Morse decomposition} (Definition~\ref{def:ms_and_md}). 
The \emph{Conley index} (Definition~\ref{def:conindex}) of a Morse set captures local dynamics in terms of
the homology group of the set relative to its \textit{exit set} which permits flow to escape.


The philosophy behind the connection matrix for a Morse decomposition is analogous to that of Morse complex.
It captures the global connections among the Morse sets in algebraic terms; it represents the boundary homomorphism of the complex formed by a direct sum of the Conley indices of the Morse sets. 
Robbin and Salamon~\cite{RoSa1992} simplified this theory, further separating algebra from dynamics through the concept of a \emph{filtered} chain complex known as the \emph{Conley complex} (Definition~\ref{def:conley-complex}).


In general, a vector field does not admit a unique Morse decomposition.
In particular, a proper assembling of Morse sets may lead to a new, coarser Morse decomposition.
Moreover, there may even not exist the minimum (the most refined) Morse decomposition. 
Every such Morse decomposition can be described by a connection matrix that summarizes its essential flow structure.
\begin{figure}[htbp]
\centerline{\includegraphics[width=0.75\textwidth, keepaspectratio]{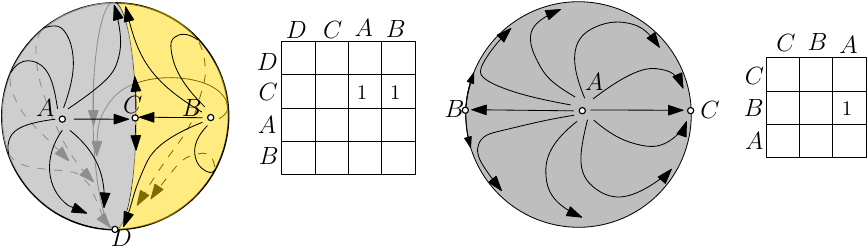}}
\caption{(left) Flow induced by a Morse function on a sphere with four critical points and its connection matrix, and (right) the same for a Morse function on a disk with three critical points.}
\label{fig:VFandCM}
\end{figure}

\begin{figure}[htbp]
\centerline{\includegraphics[width=0.75\textwidth, keepaspectratio]{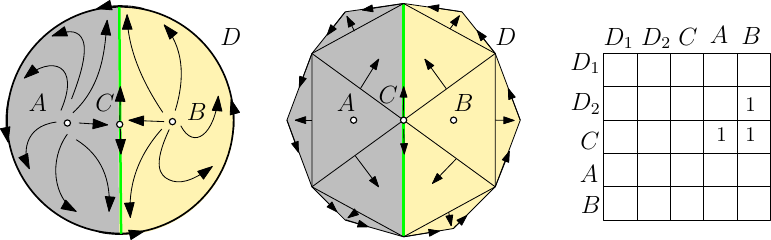}}
\caption{(left) A continuous vector field, (middle) its model with combinatorial Forman vector field, and (right) its connection matrix for the minimum Morse decomposition. The orbit $D$ is split into a saddle ($D_2$) and an attractor ($D_1$) giving a $5\times 5$ connection matrix. See an equivalent multivector field in Figure~\ref{fig:annulus}.}
\label{fig:conleycomplex}
\end{figure}

Figure~\ref{fig:conleycomplex} (left) illustrates a flow on a closed disk with repelling points $A$ and $B$, saddle point $C$, and attracting orbit $D$. 
These constitute the four Morse sets in the minimum Morse decomposition. 
However, in the connection matrix, every Morse set is represented by generators of its Conley index. 
Conley indices of the stationary points admit single generators, but the periodic orbit has two, $D_1$ and $D_2$, in dimension 0 and 1, respectively.
By definition, entries in the connection matrix indexed by generators of the same Morse set are zero, as for $D_1$ and $D_2$. 
An entry between two stationary points is determined identically as when constructing the Morse complex. 
When a more complex Morse set (for instance, a periodic orbit) is involved, the value may not be uniquely determined and depends on the representation of the Conley index generator.
The connection matrix in Figure~\ref{fig:conleycomplex} (right) is one of the two possibilities.
The other one has $1$ in the entry given by the pair $A,D_2$ instead of the pair $B,D_2$.


The Conley index theory and its associated connection matrix theory provide core insights into dynamical systems defined on suitable spaces. In practice, such systems are often available only through sampled data points, necessitating the adaptation of these theories to a discrete setting where flows are defined in terms of combinatorial vectors on discrete spaces built using available data points. Forman~\cite{Forman1998a} pioneered discrete Morse theory, where vectors are defined combinatorially on CW-complexes. Figure~\ref{fig:conleycomplex} illustrates a continuous vector field (left) which is discretized and modeled by Forman vectors (middle).

Harker, Mischaikow, and Spendlove~\cite{HMS2021,HMS2021a} brought a computational framework to the theory of connection matrices. They recognized that the concept of persistence developed in topological data analysis (TDA) could be leveraged to compute connection matrices from Forman vector fields on cubical complexes. Their algorithm uses several passes to iteratively reduce an initial complex and a boundary homomorphism to a final Conley complex. In~\cite{DLMS24}, the authors discovered a stronger connection to persistence theory while giving a more
direct single-pass algorithm which operates on a generalization of Forman vector fields: \emph{multivector fields}, introduced by Mrozek~\cite{Mr2017} and later refined in~\cite{LKMW2022}. 
The connection to persistence theory was enabled by a combinatorial definition of Conley complexes 
in terms of filtered chain complexes derived from the chain spaces of simplicial complexes~\cite{MW2021b}.

Despite these advances, the algorithm {\sc ConnectMat} detailed in~\cite{DLMS24} still admits two steps which distance it from the persistence algorithm: right-to-left column additions, and more frustratingly, row additions. These steps create a larger gap than necessary between connection matrix theory and persistence theory. They also cause the algorithm run far slower than the persistence algorithm in practice (see Appendix~\ref{sec:benchmark}). In this work, we show that, given an input matrix
representing the boundary morphism of a given Morse decomposition,
{\sc ConnectMat} can be run without any right-to-left column additions or row additions to output a connection matrix. Our results imply that connection matrix theory is far closer to persistence theory than previously thought and relevant computations are almost the same as persistence computations with restriction on column additions. The result draws upon observations that link linear algebra in matrix reductions~\cite{DW22,EH2010} to Conley complex theory~\cite{MW2021b}. 

We supplement the above result with some other observations. First, we show that
the computed connection matrix essentially remains unchanged if the order of the Morse sets is shuffled while keeping the order of simplices within them intact. 
Second, we observe that, similar to~\cite{BL24}, the concept of persistence for scalar fields~\cite{EH2010} can be
extended to a vector field, its Morse decompositions in particular.
For this we consider a Lyapunov function that essentially imparts an order on its Morse sets. This definition creates an avenue to compare vector fields through persistence, exactly as done for scalar fields.


\section{Background and overview}
\cancel{
\subsection{Connection matrix} (to be changed)
Connection matrices are a generalization of Morse boundary operators in classical Morse theory for gradient vector fields. 
They represent the connections between Morse sets in the dynamics using homological algebra.
\begin{figure}[htbp]
\centerline{\includegraphics[width=0.75\textwidth, keepaspectratio]{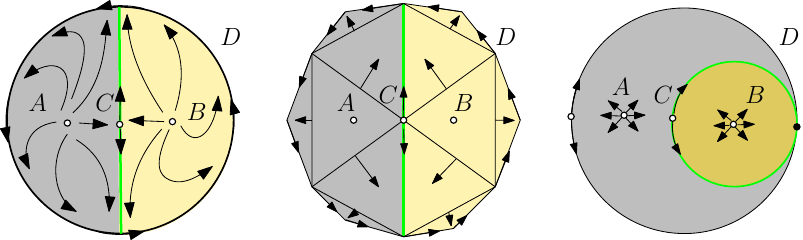}}
\caption{(left) A continuous vector field, (middle) its discretization with Forman vectors, (right) and its Conley complex represented with $5$ cells and their incidence structure.}
\label{fig:conleycomplex}
\end{figure}

The Conley index of a Morse set $M$ encapsulates the behavior of 
the local dynamics in terms of the homology
group of $M$ relative to its exit set. 
In particular, it differentiates between repellers, attractors and saddles.
Connection matrices, on the other hand, capture the dynamics 
at the global level by indicating the existence of connecting trajectories between \emph{non-trivial} Morse sets. 
In this sense, it is a useful
invariant of the complete dynamics given by the input combinatorial vector field. 
Loosely speaking,
if $H(\cl{M_p}, E_p)$ denotes the Conley index for a Morse set $M_p$, that is, the homology group of the closure of $M_p$ relative to
the exit set $E_p$, then a
connection matrix represents a boundary operator
\begin{equation*}
\Delta: \bigoplus_{p\in P} H(\cl M_p,E_p)\rightarrow \bigoplus_{p\in P} H(\cl M_p,E_p).
\end{equation*}
Alternatively,
a connection matrix can be viewed as a simplification of the vector field producing what is called a Conley complex which is a generalization of the Morse complex studied in Morse theory.
It embodies the essential dynamics by coalescing the trivial Morse sets into larger invariant
sets with a homotopy. See e.g. Figure~\ref{fig:conleycomplex} where
a vector field with two repellers, one saddle, and one attracting orbit
is simplified with five cells, two $2$-cells representing two $2$-dimensional repellers,
one $1$-cell representing the saddle, and another $1$-cell together with
a $0$-cell representing the attracting orbit. The continuous vector field shown on the left
may be discretized with a combinatorial vector field on a simplicial complex. The Conley complex
essentially simplifies this input complex to a cell complex while preserving the
Morse sets.
Depending on how the input boundary operator at the chain level of the simplicial complex
is provided for the Morse decomposition,
the homotopy-induced simplification can be different resulting in different Conley complexes.
In Figure~\ref{fig:conleycomplex}, these different Conley complexes 
indicate the fact that the system, while preserving the Morse sets, 
may be deformed by breaking the attracting periodic orbit into an attracting stationary point and a saddle. This extra saddle may be reached from only one of the two repellers. Depending on whether this is the right or the left repeller, we get two different Conley complexes.
See, for example, the two Conley complexes in Figure~\ref{fig:annulus} (right).
Corresponding to these two Conley complexes, we have two different connection matrices
shown in Figure~\ref{fig:matrixalgo}.
}

\subsection{Combinatorial multivector fields and Morse decompositions}
\label{sec:Cvec}
In this subsection, we introduce several necessary definitions from combinatorial (multi)vector field theory; see~
\cite{LKMW2022} for further detail.
Throughout this paper, we restrict our attention to simplicial complexes of arbitrary but finite dimension. For a simplicial complex $K$, we use $\leq$ to denote the face relation; that is, $\sigma \leq \tau$ if $\sigma$ is a face of $\tau$. We define the \emph{closure} of $\sigma$ as $\cl( \sigma ):= \{ \tau \; | \tau \leq \sigma \}$ and extend this notion to a set of simplices $A \subseteq K$ as $\cl(A) := \cup_{\sigma \in A} \cl( \sigma )$. The set $A$ is \emph{closed} if $A = \cl(A)$. 

\begin{definition}[Multivector and multivector field]
Given a finite simplicial complex $K$, a \emph{multivector} $V\subset K$ is a \emph{convex} subset, i.e., if $\sigma,\tau\in V$ with $\sigma\leq \tau$ then every simplex $\mu$ with $\sigma\leq \mu\leq\tau$ is in $V$.
A \emph{multivector field} $\mv$ on $K$ is a partition of $K$ into multivectors.
\end{definition}

Following~\cite{LKMW2022}, we introduce a notion of dynamics on a combinatorial multivector field. These dynamics take the form of a multivalued map $F_{\mv} \; : \; K \multimap K$, with $F_{\mv}(\sigma) = [ \sigma ]_{\mv} \cup \cl( \sigma )$ where $[ \sigma ]_{\mv}\subset K$ 
is the unique element of the partition $\cV$ containing $\sigma$. 
A finite sequence of simplices $\sigma_1, \sigma_2, \ldots, \sigma_n$ is a \emph{path} if for $i = 2, \ldots, n$, we have $\sigma_i \in F_{\mv{}}( \sigma_{i-1} )$.


\begin{definition}[Morse set and decomposition]\label{def:ms_and_md}
Given a multivector field $\mv$ on a finite simplicial complex $K$, a subset $M\subseteq K$ 
is called
a \emph{Morse set} if 
for every path $\sigma_1, \sigma_2, \ldots, \sigma_n$ in $K$ with $\sigma_1, \sigma_n \in M$, each $\sigma_i$, $i\in \{1,\ldots,n\}$, is necessarily in $M$. 
We call $M$ \emph{minimal} if there are no two Morse sets $M_1\neq \emptyset$
and $M_2\neq \emptyset$ so that $M=M_1\sqcup M_2$.
A collection of Morse sets
$\md_\mv=\{M_p\,|\, p\in \Poset\}$ indexed by a poset $(\Poset, \leq_P)$ is called
a \emph{Morse decomposition} of $\mv$ if 
$K=\sqcup_{p\in \Poset}M_p$
and for every path
$\sigma_1,\ldots, \sigma_n$ with $\sigma_1\in M_p$ and $\sigma_n\in M_q$, we have
$q\leq_P p$. A Morse decomposition is minimum if every Morse set in the decomposition
is minimal.
\end{definition}

Although in this paper we consider any Morse decomposition and not necessarily
the minimum Morse decomposition, it may be interesting to know the following fact
(an easy consequence of \cite[Theorem 7.3]{LKMW2022})
which necessarily holds for minimal Morse sets.
\begin{prop}
    For any pair of simplices $\sigma$ and $\tau$ in a minimal Morse set,
    there exists a path $\sigma=\sigma_1,\ldots, \sigma_n=\tau$.
    \label{prop:minmorse}
\end{prop}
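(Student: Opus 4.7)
The plan is to establish an equivalence relation on $M$ based on mutual path-reachability and then invoke the minimality hypothesis. Concretely, define $\sigma \sim \tau$ on $M$ by the condition that there exist both a path $\sigma=\sigma_1,\ldots,\sigma_n=\tau$ and a path $\tau=\tau_1,\ldots,\tau_m=\sigma$ in $K$. This relation is reflexive (the trivial length-one sequence $\sigma_1 = \sigma$ is a path), symmetric by definition, and transitive by concatenating paths, so it partitions $M$ into equivalence classes $C_1, \ldots, C_k$.

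The heart of the argument is to show that each equivalence class $C_i$ is itself a Morse set in $K$. Take any path $\sigma_1,\ldots,\sigma_n$ in $K$ whose endpoints $\sigma_1, \sigma_n$ lie in $C_i \subseteq M$. Since $M$ is a Morse set, every intermediate $\sigma_j$ already belongs to $M$. The prefix $\sigma_1,\ldots,\sigma_j$ gives a path $\sigma_1 \to \sigma_j$, while the suffix $\sigma_j,\ldots,\sigma_n$ yields a path $\sigma_j \to \sigma_n$. Because $\sigma_n \sim \sigma_1$ inside $C_i$, a path $\sigma_n \to \sigma_1$ exists as well; concatenating with the suffix gives a path $\sigma_j \to \sigma_1$. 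Combined with $\sigma_1 \to \sigma_j$, this shows $\sigma_j \sim \sigma_1$, so $\sigma_j \in C_i$. Hence $C_i$ is a Morse set.

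With the claim established, we write $M = C_1 \sqcup \cdots \sqcup C_k$ as a disjoint union of nonempty Morse sets. Minimality of $M$ forces $k=1$. Therefore, any pair $\sigma, \tau \in M$ lies in a single equivalence class, which in particular provides a path from $\sigma$ to $\tau$ as required.

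The main obstacle I anticipate is the second step: one must verify that the equivalence classes are Morse sets in $K$ rather than merely in $M$. The subtlety is that the Morse set condition universally quantifies over paths in the ambient complex $K$, so the argument must reach outside $M$ and pull back in. The key lever is precisely that $M$ itself is a Morse set, which confines the ambient path into $M$ and lets the equivalence relation machinery take over. Everything else—reflexivity, symmetry, transitivity, and the contradiction with minimality—is a routine unpacking.
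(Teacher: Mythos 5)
Your construction of the mutual-reachability equivalence relation and your verification that each class $C_i$ is a Morse set are both correct; the prefix/suffix argument in the second step is exactly the right way to handle the fact that the Morse-set condition quantifies over paths in all of $K$. (For reference, the paper offers no proof of this proposition at all—it is cited as a consequence of an external theorem—so a self-contained argument like yours is welcome.) However, the final step has a genuine gap. The definition of minimality only forbids writing $M = M_1 \sqcup M_2$ with \emph{two} nonempty Morse sets, so from $M = C_1 \sqcup \cdots \sqcup C_k$ with $k \geq 3$ you cannot conclude anything until you group the classes into two blocks that are each Morse sets. A union of Morse sets is \emph{not} in general a Morse set: if $a \to b \to c$ is a path with $\{a\},\{b\},\{c\}$ all singleton classes, then $\{a\} \cup \{c\}$ fails the Morse-set condition because the path passes through $b$. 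So the candidate decomposition $C_1$ versus $C_2 \cup \cdots \cup C_k$ does not work for an arbitrary choice of $C_1$.

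The fix is short but it is a real additional idea. Reachability between classes (there is a path from some element of $C_i$ to some element of $C_j$) descends to a partial order on $\{C_1,\ldots,C_k\}$: antisymmetry holds because mutual reachability between classes would merge them, by the same concatenation argument you already used. Choose $C_1$ minimal in this order, i.e., a ``sink'' class admitting no path out to any other class. Then $M \setminus C_1$ is a Morse set: any path between two simplices of $M \setminus C_1$ stays in $M$, and if it visited $C_1$ its suffix would be a path from $C_1$ to a different class, contradicting minimality of $C_1$ in the order. Now $M = C_1 \sqcup (M \setminus C_1)$ exhibits a decomposition into two nonempty Morse sets whenever $k \geq 2$, and minimality of $M$ forces $k = 1$ as you intended.
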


Our notion of Morse decomposition, taken from~\cite{DLMS24}, differs slightly from earlier definitions in~\cite{DJKKLM19,LKMW2022}; notably it admits \textit{trivial} Morse sets: sets $M\in\md_\mv$ containing no invariant parts.
\begin{definition}[Conley index]
    The Conley index of a Morse set $M$ is defined as 
    the relative homology group $H(\cl(M),\cl(M)\setminus M)$.
    \label{def:conindex}
\end{definition}
\begin{figure}[H]
\centerline{\includegraphics[width=0.7425\textwidth, keepaspectratio]{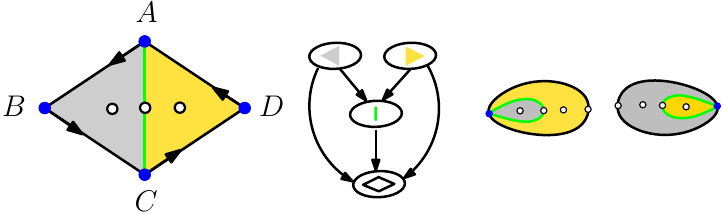}}
\caption{(left) Vector field $\mv=\{\{A,AB\},\{B,BC\},\{C,CD\},\{D,DA\},\{CA\},\{ABC\},\{CDA\}\}$; (middle) the minimum Morse decomposition consisting of 4 Morse sets, triangles $CDA$, $ABC$, edge $CA$, and
the orbit $\{\{A,AB\},\{B,BC\},\{C,CD\},\{D,DA\}\}$; and (right) two cell complexes
depicting Conley complexes - the left corresponds
to a connection matrix shown in Figure~\ref{fig:matrixalgo}.}
\label{fig:annulus}
\end{figure}

\subsection{Overview of the algorithm}
\label{subsec:algorithm_overview}

\cancel{
Let $\mv$ be a multivector field defined on a simplicial complex $K$ and
$C_*(K)$ be its chain spaces with $\ZZ_2$-coefficients. These are $\ZZ_2$-vector spaces generated by elementary chains of simplices. A grading $C_*(K)=\oplus_i C_*(M_p)$ induced by
a Morse decomposition $\sqcup M_p$ results into a filtered boundary matrix $[\partial_K]$
where $\partial_K: \oplus_iC_*(M_p)\rightarrow \oplus_iC_*(M_q)$ is the graded boundary
morphism.
This means that if $M_p$ and $M_q$ are two Morse sets with $p<q$, then simplices in the Morse set $M_p$ necessarily
come before those in the Morse set $M_q$ in the columns ordered from left to right and rows
from top to bottom.
}

Let $\mv$ be a multivector field defined on simplicial complex $K$. Let $C_*(K)$ be the collection of chain spaces of $K$ over $\ZZ_2$; these are $\ZZ_2$ vector spaces generated by elementary chains of simplices in $K$. A Morse decomposition $\md_\mv$ of $\mv$ induces a grading $C_*(K)=\oplus_p C_*(M_p)$ and thus a filtered boundary matrix $[\partial_K]$ where $\partial_K: \oplus_p C_*(M_p)\rightarrow \oplus_q C_*(M_q)$ is a graded boundary morphism. This means that in $[\partial_K]$ if $M_p, M_q \in \md_\mv$ with $p \leq_P q$, then columns and rows (ordered left-to-right and top-to-bottom respectively) representing simplices in $M_p$ come before those representing simplices in $M_q$. It is known that $[\partial_K]$ can be obtained from the minimum Morse decomposition which itself can be determined by computing strongly connected components in an appropriate graph representing $\mv$~\cite{DLMS24}.

\cancel{
A Morse decomposition can be obtained from $\mv$ by computing strongly connected components in a directed graph constructed as follows. Each simplex $\sigma\in K$ is represented as a node in the graph and a directed edge goes from a node $\sigma$ to a node $\tau$ if and only if $\tau \in F_\mv(\sigma)$. It is known that the strongly connected components in this graph constitute what are called the \emph{minimal Morse sets}~[Theorem 4.1,\cite{DJKKLM19}].
We do not elaborate
on this aspect any further as this is not the focus of this paper, and we assume
that the matrix $\partial_K$ is given.
}

The algorithm {\sc ConnectMat}, presented in~\cite{DLMS24}, computes a connection matrix through reduction of the matrix $[\partial_K]$. This algorithm bears both similarities to and key differences from the well known exhaustive persistence algorithm (see e.g.~\cite{EH2010}), which we outline below.

\textbf{Input.} The (exhaustive) persistence algorithm takes as input a filtered boundary matrix of a simplicial complex. The algorithm {\sc ConnectMat} presented in~\cite{DLMS24} also takes as input a filtered boundary matrix, however it is filtered as described above: by a poset which indexes a Morse decomposition of a multivector field on the underlying simplicial complex.

\textbf{Execution}. The persistence algorithm reduces the input matrix from left to right by way of left-to-right column additions. Every time the \emph{pivot} (row with the lowest non-zero entry) of a column (target) \emph{conflicts} with (is equal to) the pivot of a column (source) to its left, the source column is added to the target column. These additions continue until the either the target column is entirely zero, or it does not conflict with the pivot of any column to its left. Each column is considered as a target as the matrix is processed from left to right in a single pass. In exhaustive persistence, conflicts are sought and additions performed for each non-zero entry of a target column, including the pivot. The algorithm {\sc ConnectMat} performs essentially the same process as exhaustive persistence, with the following key differences: \textbf{(i)} a source column originally representing a simplex $\sigma$ triggers a conflict only if it is \textit{homogeneous}, that is, if the simplex of its pivot row belongs to the same Morse set as $\sigma$ - unlike the persistence algorithm, this disallows certain
column additions even when there are pivot conflicts; \textbf{(ii)} source columns are always to the left of target columns across Morse sets, but there is no such restriction within a Morse set; and \textbf{(iii)} the algorithm performs row additions for each column addition - if a column representing simplex $\sigma$ is added to a column representing simplex $\tau$, the row for $\tau$ is added to the row for $\sigma$.

\textbf{Output}. After processing all columns, the (exhaustive) persistence algorithm outputs a set of \textit{persistent simplex pairs} $(\sigma,\tau)$ where the pivot row of the column for $\tau$ represents $\sigma$. If the column representing a simplex $\tau$ is zero, and thus has no pivot, $\tau$ is declared \textit{unpaired}. {\sc ConnectMat} outputs a submatrix of the reduced input matrix consisting of columns (and corresponding rows) which are neither homogeneous or \textit{targetable}. A column is targetable if its corresponding row contains the pivot of a homogeneous column.

Our algorithm {\sc ConMat} (Section~\ref{sec:algorithm}) bridges many of the gaps between exhaustive persistence and {\sc ConnectMat}. The row additions of \textbf{(iii)} in {\sc ConnectMat} are actually induced by the right-to-left column additions in \textbf{(ii)}. We show that these right-to-left additions are unnecessary, eliminating both right-to-left columns additions and all row additions. The resulting algorithm {\sc ConMat} differs in execution from exhaustive persistence only in \textbf{(i)} - the necessary homogeneity of source columns.

\begin{figure}[H]
\centerline{\includegraphics{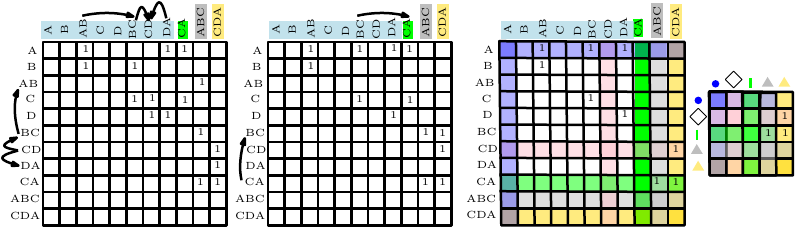}}~\\
\centerline{\includegraphics{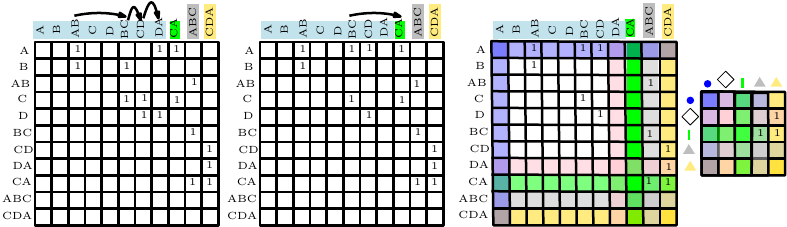}}
\caption{
Operations performed (top) by {\sc ConnectMat}~\cite{DLMS24} and (bottom) by {\sc ConMat} on a boundary matrix of the multivector field in Figure~\ref{fig:annulus}. Connection matrices are shown on the right.
}
\label{fig:matrixalgo}
\end{figure}

\subsubsection*{Example}
Figure~\ref{fig:matrixalgo} illustrates the execution of - and differences between - {\sc ConnectMat}~\cite{DLMS24} and {\sc ConMat} as both algorithms operate on a common example: the simplicial complex and associated multivector field shown in Figure~\ref{fig:annulus}. The top row of Figure~\ref{fig:matrixalgo} begins on the left with a filtered boundary matrix, uses arrows to express reductions performed by {\sc ConnectMat}, and concludes on the right with a reduced boundary matrix and connection submatrix. In the lower row, {\sc ConMat} is executed rather than {\sc ConnectMat}.

\textbf{{\sc ConnectMat} Execution.} We do not describe row additions, making throughout the assumption that if a column $i$ is added to a column $j$, the row $j$ is added to the row $i$. First, column $AB$ is added to column $BC$ due to conflict in row $B$. Next, modified column $BC$ is added to column $CD$ due to conflict in row $C$. Now, column $DA$ is added to column $CD$ due to conflict in row $D$; this addition occurs from the right, breaking the familiar pattern of exhaustive persistence. All of these additions occur within the same Morse set. Following this, column $BC$ is added to column $CA$ due to conflict in row $C$; though this addition occurs across Morse sets, it is allowed as $BC$ is a homogeneous column. Column $ABC$ triggers no conflicts, and although column $CDA$ does conflict with column $ABC$, no addition occurs as column $ABC$ is non-homogeneous.

\textbf{{\sc ConMat} Execution.} First, and entirely within the same Morse set, column $AB$ is added to column $BC$, column $BC$ to column $CD$, and column $CD$ to column $DA$. These additions are triggered by conflicts in rows $B$, $C$, and $D$ respectively. Following this, homogeneous column $BC$ is added to column $CA$.

Both algorithms yield the same connection matrix, a submatrix of the reduced boundary matrix in which homogeneous columns $AB, BC,$ and $DA$ (or $CD$ in {\sc ConnectMat}), and targetable columns $B, C,$ and $D$ are eliminated. This $5 \times 5$ matrix expresses the boundary relation on the first Conley complex shown in Figure~\ref{fig:annulus} (right), or equivalently, the following dynamics: the two ones in column $CDA$ signify flows from repeller $CDA$ to saddle edges $CA$ (extant in the initial complex), and $CD$ (formed by breaking the attracting periodic orbit); and the single one in column $ABC$ signifies flow from repeller $ABC$ toward edge $CA$.

We point out that for a given simplicial complex and Morse decomposition, there may be different boundary matrices. This occurs both because simplicies within a Morse set may be ordered differently, and because there is choice in extending the partial order on Morse sets to a total order. Different boundary matrices may lead to different connection matrices, though their associated Conley complexes depict the same dynamics up to homotopy.

\cancel{
In the example shown in Figure~\ref{fig:matrixalgo},
we illustrate the difference between the old algorithm in~\cite{DLMS24} and the
new algorithm in this paper.
In Figure~\ref{fig:matrixalgo}(left), we show the boundary operator 
filtered according to the Morse decomposition shown
in Figure~\ref{fig:annulus}(left). 
First, we describe the working of the algorithm in~\cite{DLMS24} on the matrix.
It is illustrated in the top row. We do not elaborate on the row operations with the implicit understanding that whenever a column $i$ is added to column $j$, the row $j$ is added to row $i$.
The column $AB$ is added to column $BC$ due to the conflict in the row $B$
even though it is not the pivot row
of column $BC$. 
Next, the modified column $BC$ from left is added to $CD$ because of a conflict
in row $C$.
These additions are akin to additions in 
the persistence algorithm executed with exhaustive reductions~\cite{EH2010}. 
In the old algorithm, apart from exhaustive reductions from left, columns are also
added from right. So, the column $DA$ from right is added to column $CD$ due
to a conflict in row $D$ which makes $CD$ a zero column.
All these additions happen within the same Morse set. In the meantime, corresponding row additions affect columns $ABC$ and $CDA$. Next addition happens between
columns $BC$ and $CA$ which belong to different Morse sets, but the addition of $BC$ to $CA$ is allowed
because $BC$ is homogeneous (simplex $C$ belongs to the same Morse
set as $BC$). This column addition triggers a row addition of $CA$ to $BC$. The column $ABC$ does not trigger any conflict. The final column $CDA$ has
a conflicting pivot row with column $ABC$. However, $ABC$ is not added to $CDA$
because it is not homogeneous (simplices $CA$ and $ABC$ does not come from the same Morse set). 

Next, we describe the working of the new algorithm which only employs exhaustive reductions
from left. It is illustrated in the bottom row. First, column $AB$ is added to column $BC$
and the resulting column $BC$ is added to column $CD$ as before. Now $CD$ is added to $DA$
due to the conflicting row $D$ making $DA$ a zero column. 
These additions happen within the same Morse set without
any additions from right and any additions in rows. The last addition happens from 
$BC$ to $CA$ which makes $CA$ a zero column. 

One main departure of both algorithms from the classical persistence algorithm is the way the output
is determined which is not the entire reduced matrix but only a submatrix. 
All homogeneous and targetable 
columns and their corresponding rows are eliminated from the output. 
For example, the homogeneous columns $AB$, $BC$, and $DA$ ($CD$ in the old algorithm) and
the targetable columns $B$, $C$, $D$ are eliminated from the output matrix.

The resulting
$5\times 5$ connection matrices are shown which represents the dynamics depicted by the
first Conley complex shown in Figure~\ref{fig:annulus} (right). Both algorithms produce
the same connection matrix.
The column for $CDA$ in the connection matrix
has two $1$'s in rows $CA$ and $CD$ signifying that the dynamics flow from the repeller triangle
$CDA$ to 
the existing saddle edge $CA$ and the new saddle edge $CD$ arising 
from breaking the attracting periodic orbit. 
Similarly, the column $ABC$ has a single $1$ in row $CA$ 
but no $1$ in row $CD$ indicating that dynamics 
can flow from the triangle $ABC$ only to the repeller edge $CA$.

We point out that for a given Morse decomposition, the boundary matrix
can be different because the simplices in the same Morse set can be filterted
differently. Such a different filtered boundary matrix for the same
vector field may give a different connection matrix though their Conley complexes
depict the same dynamics up to a homotopy. 
}

\section{Algebraic formulation}
\label{sec:prelim}

Although connection matrices were constructed by R.\ Franzosa in \cite{Fr1986,Fr1989}  
as a tool facilitating the detection of heteroclinic connections between invariant sets of dynamical systems,
they may be decoupled from dynamics, defined purely algebraically and studied as a part of homological algebra. In this formal setting, a simplicial complex $K$ is presented alongside a multivector field $\mv$ which partitions $K$ into convex sets, and a Morse decomposition. This decomposition is viewed as a 
\emph{chain complex} with a \emph{boundary map} that is \emph{filtered} w.r.t. a poset $\mathbb{P}=(P,\leq_P)$ over which the Morse decomposition is supported. This input
chain complex is iteratively converted to other chain complexes, the last
being the source of a Conley complex. 

\cancel{
Successive chain complexes are always kept connected
with isomorphic \emph{chain maps} presented as matrices w.r.t. some appropriate bases.
The Conley complex is extracted from the last chain complex by an implicit
chain homotopy that is implemented by a simple removal of a certain submatrix 
of the matrix representing the final chain map. 
Using homology groups with a field coefficient for each individual simplicial complex in the
\emph{filtered chain complexes}, we obtain \emph{graded vector spaces} connected by 
\emph{filtered linear
maps} induced by chain maps.
Our algebraic framework 
builds on these concepts/objects, which have
been used in earlier works in the context of connection matrix~\cite{HMS2021a,MW2021b}.

Before presenting this formal, purely algebraic definition of connection matrix,
we present some general assumptions, notations, and
some basic facts from homological algebra. 
}
\cancel{
\subsection{Algebraic preliminaries}
Throughout the paper we consider only finite dimensional vector spaces over the field $\ZZ_2$ and matrices with $\ZZ_2$ entries.
Let $V$ be an $n$-dimensional vector space over  $\ZZ_2$.
Consider the ordered set 
$\II_n:=(1,2,\ldots,n)$.
Since in the paper the order of vectors in a basis $B=(b_1,b_2,\ldots,b_n)$ of $V$ matters,
we consider the basis as a map $b:\II_n\ni i\mapsto b_i\in V$.
Given a basis $B$, we denote the associated scalar product by $\scalprod{\cdot,\cdot}$.
More precisely,  
$\scalprod{\cdot,\cdot}$ is
the bilinear form $V\times V\to\ZZ_2$ defined on generators $b,b'\in B$ by
\[
\scalprod{b,b'}=\begin{cases}
                 1 & \text{ if $b=b'$,}\\
                  0 & \text{ otherwise}
                \end{cases}
\]
and linearly extended to $V\times V$.
An example of an $n$-dimensional vector space over $\ZZ_2$ is the coordinate space $\ZZ_2^n$.
The {\em canonical basis} $\bE_n=\{\be_1,\be_2,\ldots,\be_n\}$ consists of vectors $\be_i$ in which all coordinates except the $i$th are zero.

Given a matrix $A$, we denote its $i$th row by $A[i,\cdot]$, its $j$th column by $A[\cdot,j]$
and its entry in $i$th row and $j$th column by $A[i,j]$.
By $\low_A(j)$ we denote the row index of the lowest $1$ in $A[\cdot,j]$ if the column is nonzero and we set $\low_A(j)=0$ otherwise. 
If $i:=\low_A(j)$ is nonzero,  we say that column  $A[\cdot,i]$ is the {\em target} of column  $A[\cdot,j]$. 


Let $V'$ be an $n'$-dimensional vector space over  $\ZZ_2$ with basis $B'=\{b'_1,b'_2,\ldots,b'_{n'}\}$.
Given a linear map $h:V\to V'$, its {\em matrix in bases} $B$ {\em and} $B'$ is the matrix $(h_{ij})$ where
\begin{equation}
\label{eq:hij}
h_{ij}:=\scalprod{hb_j,b'_i} \text{ for $i\in\II_{n'}$ and $j\in\II_n$.}
\end{equation}
Conversely, given a $\ZZ_2$-matrix $A=(a_{ij})$ with $n'$ rows and $n$ columns 
we have a linear map $h_{B,A,B'}:V\to V'$ defined on  basis $B=(b_1,b_2,\ldots,b_n)$
by $h_{B,A,B'}(b_j):=\sum_{i=1}^{n'}a_{ij}b'_i$. 
Then, clearly, $A$ is the matrix of $h_{B,A,B'}$.
If bases $B$ and $B'$ are clear from context, we adopt the simplified notation $h_A$.

Given a fixed basis $B$ of $V$, we identify $V$ with the coordinate space $\ZZ_2^n$ via isomorphism $h_{B,I_n,\bE_n}$
where $I_n$ denotes an $n\times n$ identity matrix. 
Under this identification, composition of linear maps corresponds to multiplication of matrices.

We recall (see, for instance, \cite[Section 3.1]{KaMiMr2004}) 
that for $1\leq i<j\leq n$ the operation of adding $i$th column to $j$th column followed by 
adding $j$th row to $i$th row in an $n\times n$ $\ZZ_2$-matrix $A$
results in a new matrix $A'$ such that 
\begin{equation}
\label{eq:AE}
E_{i,j} A'=A E_{i,j}
\end{equation}
where $E_{i,j}$ (illustrated below) is the sum of the identity matrix and the matrix whose all entries are zero except a one in $i$th row and $j$th column.
We call it the {\em column/row addition} matrix.
      \begin{equation*}
      \label{eq:cl-matrix}
        E_{i,j}:=\begin{array}{cc}
                     \left[
                     \begin{array}{ccccccccc}
                       1 & & & & & & & &  \\
                         &.& & & & & & &  \\
                         & &1&0&.&0&1& &  \\
                         & & &1& & &0& &  \\
                         & & & &.& &.& &  \\
                         & & & & &1&0& &  \\
                         & & & & & &1& &  \\
                         & & & & & & &.&  \\
                         & & & & & & & & 1\\
                     \end{array}
                     \right]
                     &
                     \begin{array}{c}
                          \\
                          \\
                         (i)\\
                          \\
                          \\
                          \\
                         (j)\\
                          \\
                          \\
                     \end{array}
                   \end{array}
      \end{equation*}
}
We briefly explain this conversion process of chain complexes in terms of algebra, leading to an algebraic view of
Conley complexes. For further details, see~\cite{MW2021b}.

\paragraph{Graded vector spaces and linear maps} Let $P$ be a finite set.
By a $P$-{\em gradation} of a finite dimensional $\ZZ_2$-vector space $V$ we mean 
the collection $\setof{V_p\mid p\in P}$ of subspaces of $V$ such that $V=\bigoplus_{p\in P}V_p$.
We call such a vector space with a given $P$-gradation a $P$-{\em graded vector space}.
Given a $P$-graded vector space $V=\bigoplus_{p\in P}V_p$ 
we denote by $\iota^V_{q}:V_q\to V$ and $\pi^V_{p}:V\to V_p$ respectively the inclusion and projection homomorphisms.
Given another $P$-graded vector space $V'=\bigoplus_{p\in P}V'_p$ 
we identify a linear map $h:V\to V'$ with the matrix $[h_{pq}]_{p,q\in P}$ of partial linear maps
$h_{pq}:V_q\to V'_p$ where $h_{pq}:= \pi^{V'}_p\circ h\circ \iota^V_q$.

\paragraph{Chain complexes and homotopy}
We recall that a {\em chain complex} with $\ZZ_2$ coefficients is a pair $(C,d)$ where $C=\bigoplus_{q\in\ZZ}C_q$
is a $\ZZ_2$ vector space with gradation $C=(C_q)_{q\in \mathbb{Z}}$ 
and $d:C\to C$ is a linear map satisfying $d(C_q)\subset C_{q-1}$ and $d^2=0$.
In our context, $C_q$ will be the chain space consisting of $q$-chains
$\sum_{\tau\in K^q}\alpha_{\tau}\tau$, $\alpha_\tau\in \ZZ_2$,
where $K^q$ denotes the set of $q$-simplices
in a simplicial complex $K$ that supports the vector field
in consideration.

Given another such chain complex $(C',d')$, a {\em chain map} $\varphi:(C,d)\to(C',d')$ is a linear map such that 
$\varphi(C_q)\subset C'_q$ and $d'\varphi=\varphi d$.
A chain map is a {\em chain isomorphism} if it is an isomorphism as a linear map. 
Two chain maps $\varphi,\psi:(C,d)\to(C',d')$ are {\em chain homotopic} if there exists a linear map $S: C\to C'$
satisfying $S(C_q)\subset C'_{q+1}$ and $\psi-\varphi=d' S+S d$. Such an $S$ is called a {\em chain homotopy}.
Two chain complexes $(C,d)$, $(C',d')$ are {\em chain homotopic} if there exist chain maps $\varphi:(C,d)\to(C',d')$
and $\psi:(C',d')\to(C,d)$ such that $\psi\varphi$ is chain homotopic to $\id_C$ and $\varphi\psi$ is chain homotopic to $\id_{C'}$.
In what follows, we consider only finite dimensional chain complexes, allowing
us to work with finite bases.

\paragraph{Filtered complexes and maps} Consider now a fixed finite poset $(P,\leq_P)$ and a linear map $h:V\to V'$ where $V$ and $V'$ are $P$-graded vector spaces. 
We say that $h$ is $\PP$-{\em filtered} or briefly {\em filtered} when $\PP$ is clear from the context if
\begin{equation*}
\label{eq:filt-homo}
h_{pq}\neq 0 \implies p\leq_P q.
\end{equation*}
\cancel{
The following proposition is straightforward.
\begin{prop}
\label{prop:extension}
If $h$ is $\PP$-filtered then it is $\PP'$-filtered for any poset $\PP'=(P,\leq')$ with partial order $\leq'$ extending $\leq$.
\end{prop}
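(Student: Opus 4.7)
The proposition asserts that enlarging the order on the common underlying set $P$ can only preserve, never destroy, the filtered property of $h$. The plan is to reduce the claim directly to the two definitions involved. I would fix an arbitrary pair $p, q \in P$, suppose that the partial map $h_{pq} \colon V_q \to V'_p$ is nonzero, and apply the hypothesis that $h$ is $\PP$-filtered to conclude $p \leq_P q$. The assumption that $\leq'$ extends $\leq_P$ means, in the standard order-theoretic sense, that $\leq_P$ is contained in $\leq'$ as a binary relation on $P$, so the comparability $p \leq_P q$ immediately lifts to $p \leq' q$. This verifies the defining implication of $\PP'$-filteredness for the same map $h$ with respect to the same block decomposition $\{V_p\}_{p \in P}$ and $\{V'_p\}_{p \in P}$.

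There is essentially no obstacle here: the result is a one-line consequence of the two definitions, and it never touches the chain-complex structure, the boundary map, any particular choice of basis, or the ambient dynamical setting. The only point worth isolating is the conventional reading of the word \emph{extends}, which must mean $\leq_P \, \subseteq \, \leq'$ as subsets of $P \times P$ rather than, for example, enlargement of the ground set $P$ itself; once this convention is fixed, the argument is purely mechanical and the filtered property transfers automatically. This also explains why the paper records the fact as a proposition whose proof needs only to be called \emph{straightforward}: it is really a monotonicity observation about the predicate ``$h_{pq} \neq 0 \implies p \leq q$'' under refinement of the ambient order.
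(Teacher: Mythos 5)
Your argument is correct and is exactly the intended one: the paper itself offers no proof, merely labeling the proposition ``straightforward,'' and the one-line unfolding of the two definitions (nonzero $h_{pq}$ gives $p\leq_P q$, which lifts to $p\leq' q$ since $\leq'$ contains $\leq_P$ as a relation) is the whole content. Your remark pinning down the meaning of ``extends'' as containment of relations on the same ground set $P$ is the only point of substance, and you handle it correctly.
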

}
By a $\PP$-{\em filtered} chain complex we mean
a chain complex $(C,d)$ with field coefficients and a given gradation $C=\bigoplus_{p\in P}C_p$
such that the boundary homomorphism $d$ is $\PP$-{\em filtered}.
Given another $\PP$-filtered chain complex  $(C',d')$ we define a $\PP$-{\em filtered chain map} 
$\varphi:(C,d)\to (C',d')$ as a chain map which is also $\PP$-filtered as a homomorphism.
One can easily check that a composition of filtered chain maps is a filtered chain map. 
We say that $\varphi$ is a {\em filtered chain isomorphism} if it is a filtered chain map which is also an isomorphism. 
Note that trivially the identity homomorphism on $(C,d)$, denoted $\id_C$, is a filtered chain isomorphism.
Two filtered chain complexes $(C,d)$ and  $(C',d')$  are {\em filtered chain isomorphic} if there exist filtered chain maps
$\varphi:(C,d)\to (C',d')$ and $\varphi':(C',d')\to (C,d)$ such that $\varphi'\circ\varphi=\id_C$ 
and $\varphi\circ\varphi'=\id_{C'}$.

\paragraph{Conley complexes and connection matrices}
\label{sec:concomplex}
Two filtered chain maps $\varphi,\varphi':(C,d)\to (C',d')$ are said to be {\em filtered chain homotopic} if there exists
a chain homotopy joining  $\varphi$ with $\varphi'$ which is also filtered as a homomorphism.
Two filtered chain complexes $(C,d)$ and  $(C',d')$  are {\em filtered chain homotopic} if there exist filtered chain maps
$\varphi:(C,d)\to (C',d')$ and $\varphi':(C',d')\to (C,d)$ such that $\varphi'\circ\varphi$ is filtered chain homotopic to $\id_C$
and $\varphi\circ\varphi'$ is filtered chain homotopic to $\id_{C'}$.

A {\em Conley complex} of a filtered chain complex $(C,d)$ is defined as 
any filtered chain complex $(\bar{C},\bar{d})$
which is filtered chain homotopic to $(C,d)$ and satisfies $\bar{d}_{pp}=0$ for all $p\in\PP$;
see~\cite{HMS2021,MW2021b}.
The following theorem may be easily obtained as a consequence of results in~\cite[Theorem 8.1, Corollary 8.2]{RoSa1992} and~\cite[Proposition 4.27]{HMS2021}.
\begin{theorem}
\label{thm:connection-matrix}
For every finitely generated chain complex $(C,d)$ there exists a Conley complex and any two Conley complexes of $(C,d)$ are filtered chain isomorphic. 
\qed
\end{theorem}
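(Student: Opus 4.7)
The plan is to handle existence and uniqueness separately, exploiting the vanishing-diagonal condition in both. For existence, I would build a Conley complex by iterated algebraic cancellation. Whenever some diagonal component $d_{pp}$ is non-zero, one can pick basis elements $\sigma,\tau\in C_p$ for which $\tau$ appears with a non-zero coefficient in $d\sigma$, then perform a change of basis supported in $C_p$ that eliminates the pair $(\sigma,\tau)$. Because the operation is confined to a single graded summand, the induced maps are automatically filtered, and an explicit homotopy (given by the inverse of the pivot coefficient) exhibits the reduced complex as filtered chain homotopic to the original. Finite generation guarantees termination, at which point all diagonal blocks vanish and a Conley complex has been produced. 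This is essentially the reduction of \cite[Theorem 8.1]{RoSa1992}, whose existence statement can alternatively be quoted verbatim.

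The uniqueness part is the real substance. Given two Conley complexes $(\bar C,\bar d)$ and $(\bar C',\bar d')$ of $(C,d)$, transitivity of filtered chain homotopy furnishes filtered chain maps $\varphi:\bar C\to\bar C'$ and $\psi:\bar C'\to\bar C$ together with a filtered linear map $S:\bar C\to\bar C$ satisfying $\psi\varphi-\id_{\bar C}=\bar d S+S\bar d$. The key computation examines the $(p,p)$ block
\[
(\psi\varphi)_{pp}-\id_{\bar C_p}=\sum_{r\in P}\bar d_{pr}S_{rp}+\sum_{r\in P}S_{pr}\bar d_{rp}.
\]
Each summand $\bar d_{pr}S_{rp}$ forces $p\leq_P r$ (because $\bar d$ is filtered) and $r\leq_P p$ (because $S$ is filtered), hence $r=p$; but $\bar d_{pp}=0$ kills the term. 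The second sum vanishes by the same sandwich argument, so $(\psi\varphi)_{pp}=\id_{\bar C_p}$ for every $p\in P$.

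To conclude, extend $\leq_P$ to a linear order on $P$ and arrange the basis accordingly: the matrix of $\psi\varphi$ is then block upper triangular by the filtered property, with identity blocks on the diagonal by the preceding calculation, hence invertible with a filtered inverse. The analogous argument applied to $\varphi\psi$ shows that it too is invertible, so $\varphi$ (and $\psi$) is a filtered chain isomorphism between the two Conley complexes. The principal obstacle is really packaging the existence construction cleanly, ensuring that each cancellation step is accompanied by explicit filtered chain maps and a filtered chain homotopy; the uniqueness, once one observes that a doubly filtered sandwich around a vanishing diagonal of $\bar d$ must itself vanish, reduces to the elementary fact that a triangular matrix with unit diagonal is invertible.
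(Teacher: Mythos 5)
Your argument is correct and is essentially the proof that the paper itself outsources: the paper offers no proof, deferring to \cite[Theorem 8.1, Corollary 8.2]{RoSa1992} and \cite[Proposition 4.27]{HMS2021}, and those references proceed exactly as you do --- existence by iterated filtered cancellation of diagonal pivots (terminating by finite generation), uniqueness by noting that each term $\bar d_{pr}S_{rp}$ of the doubly filtered sandwich forces $r=p$ and hence vanishes since $\bar d_{pp}=0$, so $\psi\varphi$ is unitriangular in any linear extension of $\leq_P$ and therefore a filtered chain isomorphism. No gaps; the only point worth spelling out is that the inverse of $\id+N$ with $N$ strictly filtered is again filtered (as $N$ is nilpotent over a finite poset and $\sum_k N^k$ is a sum of compositions of filtered maps), which you implicitly use when asserting the triangular matrix has a filtered inverse.
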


Theorem~\ref{thm:connection-matrix} lets us define a {\em connection matrix} of $(C,d)$ 
as the matrix of homomorphisms $[\bar{d}_{pq}]_{p,q\in\PP}$ for any Conley complex  $(\bar{C},\bar{d})$ of  $(C,d)$. A Morse decomposition $\md_\mv=\{M_p\,|\, p\in \Poset\}$ of a vector
field $\mv$ defines a $P$-filtered 
chain complex $(C,d):=(C_{\md_\mv},d_{\md_\mv})$ where each vector space $C_p$, $p\in P$, is formed by
chains of simplices in $M_p$ and where $d$ is given by the boundary morphism among these chain spaces. This allows us to define:
\begin{definition}[Conley complex for Morse decomposition]\label{def:conley-complex}
  For a Morse decomposition $\md_\mv$, any $(\bar{C}_{\md_\mv},\bar{d}_{\md_\mv})$ is called its Conley complex and $[\bar{d}_{\md_\mv}]$ its
  connection matrix.  
\end{definition}

\paragraph{Matrices for boundary homomorphisms}
Given a $P$-filtered chain complex $(C,d)$, we represent $d$ 
with a matrix in some chosen basis. When $(C,d)=(C_{\md_\mv},d_{\md_\mv})$,
a particular basis of interest
for $C$ is given by the elementary chains $\{\langle\sigma\rangle\}_{\sigma\in K}$, $\langle \sigma \rangle=\sum_{\tau\in K}\alpha_{\tau}\tau$ where
$\alpha_\tau$ is $1$ if $\tau=\sigma$ and is $0$ otherwise.

For a $P$-graded vector space $V$, we say that basis $B=(b_1,b_2,\ldots,b_n)$ is $P$-{\em graded}
if $B\subset\bigcup_{p\in P}V_p$. 
For each basis vector $b_i\in B$ there is exactly one $p\in P$ such that $b_i\in V_p$, and
we denote this $p$ by $\pgrad{P}{b_i}$.
Clearly, every $P$-{\em graded vector space}, and hence the chain complex $C$, admits a $P$-graded basis. 
Indeed, the elementary chains
$\{\langle \sigma \rangle\}_{\sigma\in K}$ constitute a $P$-graded basis
for $C_{\md_\mv}$ with $\pgrad{P}{\langle \sigma \rangle}=p$ if $\sigma\in M_p$.

Given a fixed $P$-graded basis $B=(b_1,\ldots,b_n)$ of $V$, we say that
$B$ is $\PP$-filtered if there is a linear extension $\leqlin$ of $\leq_P$ such that
\begin{eqnarray*}
i\leq j\implies \pgrad{P}{b_i}\leqlin \pgrad{P}{b_j}.
\end{eqnarray*}
An ${n}\times n$ matrix $A$ is presented w.r.t. a basis $B=(b_1,\ldots,b_n)$ if its
$i$th column and row represent the basis element $b_i$.
We say $A$ is $\PP$-{\em filtered} w.r.t. $B$ or briefly {\em filtered} (when $\PP$ and $B$ are clear from the context) if $B$ is $\PP$-filtered. 

Given a $\PP$-filtered chain complex $(C,d)$, a $\PP$-filtered boundary
matrix $A=[d]$ is obtained by considering a $\PP$-filtered chain basis of $C$
where the map $h_A:C\to C$ corresponding to the matrix $A$ is $d$.
Such a matrix always exists and is upper triangular~\cite{MW2021b}. 

\cancel{
\begin{prop}
\label{prop:dab-exists}
A $P$-filtered Chain complex $(C,d)$ admits a filtered boundary matrix $A$ such that the linear map
$h_A: C\rightarrow C$ is $P$-filtered.
\end{prop}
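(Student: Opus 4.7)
The plan is to construct a suitable ordered basis of $C$ explicitly and then read off the matrix of $d$ in that basis. Exploiting the $P$-gradation $C=\bigoplus_{p\in P}C_p$, I would first pick an arbitrary ordered basis $B_p$ of each summand $C_p$; their disjoint union is then a $P$-graded basis of $C$. To fix a global ordering, I would take any linear extension $\leqlin$ of the partial order $\leq_P$ (such an extension exists for every finite poset by topological sort) and concatenate the blocks $B_p$ in that $\leqlin$-order to obtain $B=(b_1,\dots,b_n)$. By construction, $i\leq j$ forces $\pgrad{P}{b_i}\leqlin \pgrad{P}{b_j}$, so $B$ meets the definition of a $\PP$-filtered basis given above.

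Next, I would set $A:=[d]_B$, the matrix of $d$ with respect to $B$. The associated linear map $h_A$ is then literally $d$, which is $\PP$-filtered by hypothesis. To check that the matrix $A$ itself is $\PP$-filtered with respect to $B$, I would consider a nonzero entry $A[i,j]$: setting $p:=\pgrad{P}{b_i}$ and $q:=\pgrad{P}{b_j}$, this entry contributes to the partial map $d_{pq}\colon C_q\to C_p$, so $\PP$-filteredness of $d$ forces $p\leq_P q$, hence $\pgrad{P}{b_i}\leqlin \pgrad{P}{b_j}$, and therefore $i\leq j$. This simultaneously shows that $A$ is upper triangular.

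The argument is essentially pure bookkeeping, so I do not expect any serious obstacle. The one place a genuine choice is made is the linear extension of $\leq_P$; the ordering within each block $B_p$ is entirely free. Both the $\PP$-filteredness of $h_A$ and that of $A$ then fall out of the hypothesis that $d$ is $\PP$-filtered, once $B$ has been ordered consistently with $\leqlin$.
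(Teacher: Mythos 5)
Your argument is correct for the stated proposition and is essentially the same as the paper's (the paper simply asserts existence, citing \cite{MW2021b}): choose a basis of each graded piece $C_p$, concatenate the blocks along a linear extension of $\leq_P$ to get a $\PP$-filtered basis, and note that $h_A=d$ is $P$-filtered by hypothesis. One caveat on your closing remark: from $\pgrad{P}{b_i}\leqlin\pgrad{P}{b_j}$ you may conclude $i\leq j$ only when the two grades are distinct; for a nonzero entry with $\pgrad{P}{b_i}=\pgrad{P}{b_j}$ (a diagonal block) your within-block ordering is arbitrary, so upper triangularity does \emph{not} follow from your construction as given --- it additionally requires ordering each $B_p$ compatibly with $d$ (e.g., by simplex dimension for the elementary chain basis). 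Since the proposition itself does not assert upper triangularity, this side claim does not affect the validity of your proof of the statement.
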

}

Indeed, for a $\PP$-filtered chain complex $(C_{\md_\mv},d_{\md_\mv})$ obtained from a Morse decomposition $\md_\mv=\{M_p\,|\, p\in \Poset\}$, 
the $\PP$-filtered matrix $A$ w.r.t. a $\PP$-filtered elementary chain basis
$(\langle \sigma_1\rangle,\ldots,\langle\sigma_n\rangle)$ gives
$h_A=d$ if $A[i,j]=\alpha_{i,j}$ where $\partial \sigma_j=\sum_{i}\alpha_{i,j}\sigma_i$ for all $i,j\in \{1,\ldots, n\}$.

\section{Algorithm}
\label{sec:algorithm}

We present our algorithm {\sc ConMat}. For comparison, we also present the older algorithm {\sc ConnectMat} of \cite{DLMS24}. Given a filtered chain complex $(C,d):=(C_{\md_\mv},d_{\md_\mv})$, these algorithms compute a connection matrix for $\md_\mv$
from its filtered boundary matrix $A$, which is given with respect to a $\PP$-filtered basis comprised of elementary chains $(\langle \sigma_1\rangle,\ldots,\langle\sigma_n\rangle)$. Definition~\ref{def:alg_notations} formalizes several necessary concepts.

\begin{definition}\label{def:alg_notations}
Let $j$ be any column in $A$. The notation
$\low_A(j)$ denotes
the largest row index $i$ in $A$ so that $A[i,j]$ is non-zero. We also
refer to $\low_A(j)$ as the \emph{pivot} row of $j$.
We say column $j$ and row $j$ are \emph{homogeneous} if $\sigma_j$ and $\sigma_{\low_A(j)}$
are in the same Morse set. If column $j$ is homogeneous, then the column and row
with index $\low_A(j)$ are called \emph{targetable}. The set of indices of
homogeneous and targetable columns (and hence rows)
in matrix $A$ are denoted $J_h$ and
$J_t$ respectively. Let $\II_n$ denote the set $\{1,\ldots,n\}$.
\end{definition}

\begin{algorithm}[H]
    \caption{{\sc{ConnectMat}}}\label{alg:connectmat}
    \KwData{An $n\times n$ matrix $A$ of a filtered boundary homomorphism $d$} 
    \KwResult{A connection matrix}
    \For(){$j:=1$ to $n$}{ 
        \For{$i:=\low_{A}(j)$ \text{down to} $1$}{
            \If{$A[i,j]=1$}{ 
                $S:=\setof{s\in\II_n\mid \text{$s\neq j$ \& $\low_{A}(s)=i$ \& $A[\cdot,s]$ is homogeneous}}$\;
                \If{$S\neq\emptyset$}{ 
                    $s:=\min S$\;
                    add column $A[\cdot,s]$ to column $A[\cdot,j]$\;
                    add row $A[j,\cdot]$ to row $A[s,\cdot]$\; 
                }
            }
        }
    }
$J:=\II_n\setminus J_{h}(A)\setminus J_{t}(A)$\;
\Return{$A$ restricted to columns and rows with indices in $J$}
\end{algorithm}

\begin{algorithm}[H]
    \caption{{\sc{ConMat}}}\label{alg:contmat}
    \KwData{An $n\times n$ matrix $A$ of a filtered boundary homomorphism $d$} 
    \KwResult{A connection matrix}
    \For(){$j:=1$ to $n$}{ 
        \For{$i:=\low_{A}(j)$ \text{down to} $1$}{
            \If{$A[i,j]=1$}{ 
                \If {$\exists$ \text{$s<j$ \& $\low_{A}(s)=i$ \& $A[\cdot,s]$ is homogeneous}}
                   {add column $A[\cdot,s]$ to column $A[\cdot,j]$
                }
            }
        }
    }
$J:=\II_n\setminus J_{h}(A)\setminus J_{t}(A)$\;
\Return{$A$ restricted to columns and rows with indices in $J$}
\end{algorithm}

Both algorithms consist of a reduction phase followed by a simple extraction phase. The reduction phase in {\sc ConnectMat} consists left-to-right and right-to-left additions of source columns to target columns, as well as row additions. The reduction phase of {\sc ConMat} consists only of left-to-right additions of source to target columns.

\subsection{Reductions}
\label{sec:reductions}

 In both algorithms, let $A_{out}$ denote the matrix $A$ after all reductions are performed, that is, $A_{out}$ is the transformed matrix $A$ when the algorithms exit the outermost {\bf for} loop.

We say that a filtered matrix $A$ is {\em reduced} if it satisfies the following three conditions:
\begin{itemize}
   \item (R1) the map
$
  \alpha: J_h(A)\ni j\mapsto \low_{A}(j)\in J_t(A)
$
is a well defined bijection, 
   \item (R2) $J_h(A)\cap J_t(A)=\emptyset$, and
   \item (R3) if $A[i,j]=1$ for some $i\leq \low_A(j)$, then there is no $s\in J_h(A)$ with $s<j$ and $\low_A(s)=i$.
\end{itemize}

In~\cite{DLMS24}, it is proved that the matrix $A_{out}$ after all reduction steps in {\sc ConnectMat} is reduced, and that the submatrix of $A_{out}$ obtained by eliminating all homogeneous and targetable columns and rows
is a connection matrix for the input chain complex $(C,d)$. It is easy to verify that {\sc ConMat} satisfies R1 and R3 due to additions from homogeneous columns within
the innermost \textbf{for} loop. The difficult part is to show that it satisfies R2 also. 

\cancel{
\begin{itemize}
    \item Just to remind myself about why the older algorithm works with row additions (there is a gap in the proof in the current journal version, this note says that the algorithm is alright): Older algorithm adds a homogeneous column $d$ to a column $c$ on its left. This will need to add row($c$) to row($d$). We show that this row additions do not cause any change to the columns left of $d$ and thus $\low(d')$ for any column does not get greater than $\gr(d')$.
    The only columns that can get into this trouble are the ones between $c$ and $d$ because the matrix is always (strictly) upper triangular. We observe that
    the row($c$) have all zeros between the columns of $c$ and $d$. Take any column $d'$ that has a $1$ in row($c$). This means $d'$ represents
    a $(p+1)$-simplex if $c$ represents a $p$-simplex. Since $d$ has been added
    to $c$, the column $d$ also represents a $p$-simplex. We claim that $d$ and $c$ are in the same Morse set. Since $d$ is homogeneous, $\low(d)=\low(c)$ is in the same Morse set as of $d$. Since column $c$ lies between the column $\low(c)$ and $d$, the column $c$ has to be in the same Morse set as  $d$ is in. We can sort the columns of all Morse sets in any order as long as all faces appear before a simplex appear in the Morse set. So, we can sort all simplices by their dimensions in any Morse set. In particular, the $(p+1)$-simplices are placed after $p$-simplices. This implies that row($c$) cannot have any $1$ between the columns of $c$ and $d$.
\end{itemize}
}

Before we prove this fact, we establish certain notations. 
In what follows, we consider a \emph{complete} reduction of a matrix which, for example, can be achieved by 
left-to-right column additions while applying no restriction to homogeneity. 
\begin{definition}
Let $\compl A$ be obtained from a matrix $A$ after a set of
left-to-right column additions henceforth called a \emph{complete column
reduction} where every column $j$ in $\compl A$ is either zeroed out or $\low_A(j)\neq \low_A(j')$ for $j\neq j'$. We say $\compl A$ is a \emph{complete
reduction} of $A$.
\end{definition}
\begin{definition}
We say two columns $s_1$ and $s_2$
are in the same Morse set if $\sigma_{s_1},\sigma_{s_2}$ belong to the same Morse
set and write $s_1\simeq s_2$.
\end{definition}

\begin{observation}\label{obs:non-homogeneous-columns}
    Throughout both {\sc ConMat} and a complete column reduction, a non-homogeneous column cannot become homogeneous.
\end{observation}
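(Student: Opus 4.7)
The plan is to reduce the argument to a single structural fact about the input: the $\PP$-filtered basis. Since neither algorithm alters the basis $(\sigma_1,\ldots,\sigma_n)$---only matrix entries are updated by column additions---the implication $i' < i \implies \pgrad{P}{\sigma_{i'}} \leqlin \pgrad{P}{\sigma_i}$ is preserved throughout, and this is essentially all the proof needs.

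I would proceed by induction on the sequence of column additions. Suppose at some intermediate stage column $j$ is non-homogeneous, and set $i^*:=\low_A(j)$, $q := \pgrad{P}{\sigma_j}$, $p := \pgrad{P}{\sigma_{i^*}}$, where $p \neq q$. Because the initial matrix is upper triangular and left-to-right additions preserve upper triangularity, $i^* < j$, which combined with the filtered-basis inequality yields $p \leqlin q$; since $\leqlin$ is a total order and $p\neq q$, $p$ strictly precedes $q$ under $\leqlin$. Now consider any single column addition performed on column $j$ by either algorithm: in a complete column reduction it satisfies $\low_A(s) = i^*$, while in {\sc ConMat} the inner loop only triggers an addition for $\low_A(s) = i \leq i^*$ with $A[i,j]=1$. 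If $\low_A(s) < i^*$, then column $s$ has no non-zero entries at rows $\geq i^*$, the entry $A[i^*,j] = 1$ is untouched, and the pivot of $j$ remains $i^*$, so non-homogeneity is preserved verbatim. If $\low_A(s) = i^*$, the addition zeroes $A[i^*, j]$ and XORs entries strictly above, producing either the zero column (vacuously non-homogeneous) or a new pivot $i' < i^*$; the filtered basis then gives $\pgrad{P}{\sigma_{i'}} \leqlin p$, which strictly precedes $q$ under $\leqlin$, so $\sigma_{i'} \notin M_q$, and column $j$ remains non-homogeneous.

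The main subtlety to watch for is framing the argument purely in terms of $\leqlin$-ordering of row indices, rather than through $\leq_P$-filteredness of matrix entries: although {\sc ConMat} does preserve the latter because every added $s$ is homogeneous with $\pgrad{P}{\sigma_s} = p \leq_P q$, a complete column reduction need not, since it may add columns whose Morse-set indices are only $\leqlin$-comparable---not $\leq_P$-comparable---to $q$. Once the proof is phrased in terms of the weaker total ordering, however, it applies uniformly to both algorithms.
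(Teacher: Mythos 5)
Your proof is correct and supplies exactly the reasoning the paper implicitly relies on: the paper states this as an observation without proof, and the justification is the one you give --- under left-to-right additions triggered only at rows at or above the current pivot, a column's pivot can only move upward (or the column becomes zero, hence stays non-homogeneous), while the $\PP$-filtered ordering of the fixed basis makes ``pivot strictly above $j$'s Morse block'' an absorbing condition. Your closing remark that the argument must be run against the total order $\leqlin$ rather than $\leq_P$ is the right precaution for the complete reduction, which may add columns from $\leq_P$-incomparable Morse sets.
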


\begin{prop}\label{prop:homogeneity_loss}
    Consider matrix $A$ (at any stage of {\sc ConMat} or a complete column reduction) and a homogeneous column $j$.
    Let $s$ be another column such that $s<j$ and $A[\low_A(s),j]=1$.
    If an addition of a column $s$ to $j$ changes the pivot of $j$ 
    then $s$ is a homogeneous column such that $s\simeq j$ and $\low_A(s)=\low_A(j)$.
\end{prop}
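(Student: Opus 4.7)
The plan is to establish the conclusion in two short steps: first to show $\low_A(s)=\low_A(j)$, and then to show $\sigma_s$ and $\sigma_j$ lie in the same Morse set, which immediately yields both $s\simeq j$ and the homogeneity of $s$.

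I will begin by pinning down $\low_A(s)$. The hypothesis $A[\low_A(s),j]=1$ combined with the definition of $\low_A(j)$ as the largest row index with a $1$ in column $j$ forces $\low_A(s)\leq \low_A(j)$. For the addition of column $s$ to column $j$ to change the pivot $\low_A(j)$, either a new $1$ has to appear in a row of index larger than $\low_A(j)$, or the entry $A[\low_A(j),j]=1$ must be cancelled. The first scenario is impossible, since $A[k,s]=0$ for every $k>\low_A(s)$ and $\low_A(s)\leq \low_A(j)$; the second forces $A[\low_A(j),s]=1$, hence $\low_A(s)\geq \low_A(j)$. Combining, $\low_A(s)=\low_A(j)$; let $i$ denote this common value.

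I will then leverage the filtered basis ordering. The matrix $A$ is upper triangular initially, and upper triangularity is preserved by left-to-right column additions, so it holds at any stage of \textsc{ConMat} or of a complete column reduction. Thus $\low_A(s)=i\leq s$, and the basis being $P$-filtered gives $\pgrad{P}{\langle\sigma_i\rangle}\leqlin \pgrad{P}{\langle\sigma_s\rangle}$. Since $j$ is homogeneous and $i=\low_A(j)$, we have $\pgrad{P}{\langle\sigma_j\rangle}=\pgrad{P}{\langle\sigma_i\rangle}$, and so $\pgrad{P}{\langle\sigma_j\rangle}\leqlin \pgrad{P}{\langle\sigma_s\rangle}$. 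On the other hand, $s<j$ together with the basis ordering yields $\pgrad{P}{\langle\sigma_s\rangle}\leqlin \pgrad{P}{\langle\sigma_j\rangle}$. Antisymmetry of the linear order $\leqlin$ forces $\pgrad{P}{\langle\sigma_s\rangle}=\pgrad{P}{\langle\sigma_j\rangle}$, i.e., $s\simeq j$; and because this common grade equals $\pgrad{P}{\langle\sigma_{\low_A(s)}\rangle}$, column $s$ is also homogeneous.

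The subtle point I anticipate to be the main obstacle is that during a complete column reduction the stronger $\leq_P$-filteredness of the matrix entries need not be preserved, since unrestricted left-to-right additions can introduce nonzero entries whose row grade is not $\leq_P$-comparable to the column grade. My argument sidesteps this issue by leaning only on plain upper triangularity of $A$ (preserved by arbitrary left-to-right additions) and on the fixed $\leqlin$-ordering of the underlying basis (which never changes throughout reduction), both of which persist during both \textsc{ConMat} and any complete column reduction.
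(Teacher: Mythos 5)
Your proposal is correct and follows essentially the same route as the paper's own proof: first forcing $\low_A(s)=\low_A(j)$ from the pivot-change hypothesis, then sandwiching $\low_A(s)\le s<j$ against the $\PP$-filtered basis order and the homogeneity of $j$ to conclude $s\simeq j$ and the homogeneity of $s$. You merely spell out in full the step the paper compresses into ``$t<s<j$, which implies $s\simeq j$,'' and your closing remark that only upper triangularity and the fixed basis ordering are needed (not preservation of $\leq_P$-filteredness of the entries) is a sound observation consistent with what the paper implicitly relies on.
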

\begin{proof}
    The addition can change the pivot of $j$ only if $t:=\low_A(s)=\low_A(j)$.
    Since $j$ is homogeneous we have $t\simeq j$.
    Therefore, $t < s <j$, which implies that $s\simeq j$.
    \qed
\end{proof}

\cancel{
\begin{prop}\label{prop:hom_coincides_with_complete_reduction}
    Consider a matrix $A_{\out}$ obtained from {\sc ConMat} and its complete reduction 
    $\compl{A}$.
    A column $j$ is homogeneous in $A_{\out}$ if and only if $j$ is homogeneous in $\compl{A}$.
\end{prop}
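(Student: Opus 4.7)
\emph{Plan.} Observation~\ref{obs:non-homogeneous-columns} already yields one direction: if $j$ is non-homogeneous in $A_{\out}$, then $j$ remains non-homogeneous in $\compl{A}$ --- equivalently, homogeneity in $\compl{A}$ implies homogeneity in $A_{\out}$. For the reverse implication, the plan is first to show that homogeneous columns in $A_{\out}$ have pivots unique across the whole matrix, and then to argue by induction on the column index that the pivot row of each such column survives in $\compl{A}$, forcing the column to remain homogeneous there.

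\emph{Claim (i): Homogeneous columns in $A_{\out}$ have unique pivots.} Suppose $j \in J_h(A_{\out})$ has pivot $i := \low_{A_{\out}}(j)$ and some $k \neq j$ also satisfies $\low_{A_{\out}}(k) = i$. If $k < j$, then condition R3 applied to $A_{\out}$ forces $k$ non-homogeneous, whereas Proposition~\ref{prop:homogeneity_loss} applied to $A_{\out}$ --- using $A_{\out}[\low_{A_{\out}}(k), j] = 1$ and the observation that hypothetically adding $k$ to $j$ cancels $j$'s pivot at row $i$ --- forces $k$ homogeneous, a contradiction. If $k > j$, I trace iteration $k$ of {\sc ConMat}: column $j$ is already finalized to its $A_{\out}$ state by the start of this iteration (homogeneous, $\low(j) = i$). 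When the inner loop reaches row $i$, either $A[i, k] = 1$, in which case $j$ provides a valid homogeneous source and {\sc ConMat} cancels the entry via the addition, or $A[i, k] = 0$ already. Every subsequent step of the inner loop uses a source whose pivot is strictly below $i$ and therefore has $0$ at row $i$, so the entry cannot be reinstated; hence $A_{\out}[i, k] = 0$, contradicting $\low_{A_{\out}}(k) = i$.

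\emph{Claim (ii): Pivots of homogeneous columns persist in $\compl{A}$.} View $\compl{A}$ as obtained from $A_{\out}$ by further exhaustive left-to-right column reductions, and induct on the column index. For $j \in J_h(A_{\out})$ with $i = \low_{A_{\out}}(j)$, assume the claim for all $j' < j$ in $J_h(A_{\out})$, and suppose for contradiction that $\low_{\compl{A}}(j) \neq i$. Then at some stage during $j$'s processing, a source $s < j$ with current $\low(s) = i$ is added to $j$. If $s \in J_h(A_{\out})$, the induction hypothesis together with Claim~(i) give $\low_{\compl{A}}(s) = \low_{A_{\out}}(s) \neq i$, so at the moment of addition (after $s$'s own processing) $s$'s current pivot is $\low_{\compl{A}}(s) \neq i$, a contradiction. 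If $s \notin J_h(A_{\out})$, then by Observation~\ref{obs:non-homogeneous-columns} $s$ remains non-homogeneous; since $j$ is still homogeneous at the moment of addition, Proposition~\ref{prop:homogeneity_loss} forces $s$ homogeneous, again a contradiction. Hence $\low_{\compl{A}}(j) = i$, so $\sigma_{\low_{\compl{A}}(j)} = \sigma_i \simeq \sigma_j$ and $j$ is homogeneous in $\compl{A}$.

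\emph{Main obstacle.} The crux is Case $k > j$ of Claim~(i), where one must carefully track how entries in column $k$ evolve through {\sc ConMat}'s inner loop to conclude that row $i$ is zeroed out in $A_{\out}$; the induction in Claim~(ii) is then a clean application of Proposition~\ref{prop:homogeneity_loss} and Observation~\ref{obs:non-homogeneous-columns}.
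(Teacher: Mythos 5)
Your proof is correct and follows essentially the same strategy as the paper: the reverse direction is Observation~\ref{obs:non-homogeneous-columns} in both, and the forward direction is a contradiction extracted from the first addition that would change a homogeneous column's pivot, ruled out by combining R3 with Proposition~\ref{prop:homogeneity_loss}. The only difference is organizational --- you isolate R3 into an explicit pivot-uniqueness claim and then induct on the column index, whereas the paper argues via a first-column-to-lose-homogeneity choice followed by a recursive descent through the chain of offending source columns; your version is arguably cleaner and additionally records the slightly stronger fact that the pivots of homogeneous columns are preserved by the complete reduction.
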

\begin{proof}
    Non-homogeneous columns of $A$ do not change their homogeneity status by Observation~\ref{obs:non-homogeneous-columns}. They are also not used to
    reduce a homogeneous column in {\sc ConMat} and 
    in a complete reduction. Therefore, we can forget about these columns in $A$ and
    without loss of generality assume that $A$ consists of only homogeneous columns.

    Suppose that, a homogeneous column $j$ has become non-homogeneous in $A_{\out}$.
    We prove that it becomes non-homogeneous in $\bar A$ as well. 
    By Proposition~\ref{prop:homogeneity_loss}, there is a set of
    columns $s_1<j,\ldots, s_k<j$, $s_i\simeq j$,
    whose additions to $j$ by {\sc ConMat} make
    it non-homogeneous for the first time. The columns $s_i$, $1\leq i \leq k$,
    are also available for a complete reduction of $A$ in addition to possibly
    other column additions. This implies that we have 
    $\low_{\bar A}(j)\leq \low_{A_{\out}(j)}$ showing that $j$ becomes non-homogeneous in $A_{\out}$ as well.

    Next, consider that column $j$ which is homogeneous in $A$ becomes non-homogeneous
    in $\bar A$ by a complete reduction. We show that $j$ is made non-homogeneous
    by {\sc ConMat} in $A_{\out}$ also. By Proposition~\ref{prop:homogeneity_loss}, there is a set of columns $s_1<j,\ldots, s_k<j$, $s_i\simeq j$, whose additions to $j$ by the complete reduction of $A$ make it non-homogeneous for the first time.
    These set of columns are also available for {\sc ConMat} to reduce $j$.
    Indeed, steps ** in {\sc ConMat} adds these columns to $j$ before any other
    column from a different Morse set is added to $j$. By Observation~\ref{obs:non-homogeneous-columns}, these additions do not change the non-homogeneity of $j$
    in $A_{\out}$.
 \end{proof}
}

\begin{prop}\label{prop:hom_coincides_with_complete_reduction}
    Consider the matrix $A_{\out}$ computed by {\sc ConMat} and its complete reduction 
    $\compl{A}_{\out}$.
    A column $j$ is homogeneous in $A_{\out}$ if and only if $j$ is homogeneous in $\compl{A}_{\out}$.
\end{prop}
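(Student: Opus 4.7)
The plan is to handle the two directions of the biconditional separately. The backward direction, that $j$ homogeneous in $\compl{A}_{\out}$ implies $j$ homogeneous in $A_{\out}$, should follow immediately by contrapositive of Observation~\ref{obs:non-homogeneous-columns}: since complete reduction consists solely of left-to-right column additions, any column that is non-homogeneous in $A_{\out}$ remains non-homogeneous throughout the reduction, hence also in $\compl{A}_{\out}$. So all the substance lies in the forward direction.

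For the forward direction I would prove the stronger statement that every column $j$ homogeneous in $A_{\out}$ satisfies $\low_{\compl{A}_{\out}}(j)=\low_{A_{\out}}(j)$, by induction on the column index $j$. The inductive hypothesis will read: for all $j'<j$ homogeneous in $A_{\out}$, $\low_{\compl{A}_{\out}}(j')=\low_{A_{\out}}(j')$. The inductive step will examine the moment during complete reduction when column $j$ is first processed; at that point every column $s<j$ has already reached its final state in $\compl{A}_{\out}$, while column $j$ still matches $A_{\out}$. So it suffices to show that in this current matrix $A'$ there is no $s<j$ with $\low_{A'}(s)=\low_{A'}(j)$, which certifies that no additions to $j$ occur and that its pivot survives unchanged.

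The main obstacle is ruling out such a troublesome $s$. The plan is to suppose for contradiction that some $s<j$ has $\low_{A'}(s)=\low_{A'}(j)=t$, where $t=\low_{A_{\out}}(j)$ by pivot preservation of $j$ up to this point. Since $j$ is still homogeneous in $A'$ and the hypothetical addition of $s$ into $j$ would cancel the pivot entry $A'[t,j]=1$, Proposition~\ref{prop:homogeneity_loss} applies and forces $s$ to be homogeneous in $A'$. Observation~\ref{obs:non-homogeneous-columns} then lifts this backward to yield that $s$ is already homogeneous in $A_{\out}$ (otherwise it could never have become homogeneous during the intervening left-to-right additions). The inductive hypothesis supplies $\low_{A_{\out}}(s)=\low_{\compl{A}_{\out}}(s)=\low_{A'}(s)=t=\low_{A_{\out}}(j)$, exhibiting two distinct homogeneous columns of $A_{\out}$ sharing the same pivot and thereby violating property (R1), already established for the output of {\sc ConMat}. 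The delicate point I expect to be the hardest is orchestrating the four ingredients cleanly: Proposition~\ref{prop:homogeneity_loss} constrains which additions can change a homogeneous pivot; the observation lifts homogeneity back to $A_{\out}$; the inductive hypothesis locks in the earlier columns' pivots; and (R1) for $A_{\out}$ delivers the final clash.
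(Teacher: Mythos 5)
Your proof is correct and rests on the same three ingredients as the paper's: Proposition~\ref{prop:homogeneity_loss} to force any pivot-changing source column to be homogeneous, Observation~\ref{obs:non-homogeneous-columns} to pull that homogeneity back to $A_{\out}$, and one of the already-established reducedness properties of $A_{\out}$ to produce the final clash (you invoke the injectivity in (R1), the paper invokes (R3); both are legitimately available here, since only (R2) is at stake in this part of the argument). The packaging, however, differs. The paper argues by minimal counterexample: it takes the \emph{first} column to lose homogeneity, then recurses backwards through the sequence of additions until it finds a source column whose pivot is unchanged from $A_{\out}$, terminating only via the informal remark that there are finitely many additions. You instead run a forward induction on the column index and prove the strictly stronger invariant that every column homogeneous in $A_{\out}$ retains its exact pivot throughout the complete reduction; the inductive hypothesis then pins down $\low_{A_{\out}}(s)$ for the offending source column $s<j$ directly, so no backward recursion is needed. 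This buys a cleaner termination argument and a slightly stronger conclusion (pivot preservation rather than mere preservation of the homogeneity flag). The one mild caveat is that your inductive step assumes the complete reduction processes columns one at a time, left to right, finalizing each before touching the next; the paper's definition of a complete reduction is looser about the order of operations, but since the pivots of a reduced matrix do not depend on the order of left-to-right additions, you may assume this canonical schedule without loss of generality.
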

\begin{proof}
    Suppose on the contrary a homogeneous column $j$ in $A_{\out}$ becomes non-homogeneous during the complete column reduction of $A_{\out}$.
    Assume additionally, that $j$ is the first such column that loses homogeneity.
    Proposition \ref{prop:homogeneity_loss} implies that just before the first change of the pivot of $j$, a homogeneous column $s$ such that $s\simeq j$ 
    has been added to $j$.
    Let $A'$ denote the matrix immediately before this addition.
    If $\low_{A_{\out}}(s)=\low_{A'}(s)$ then, also by Proposition \ref{prop:homogeneity_loss}, we have 
        $\low_{A_{\out}}(s)=\low_{A'}(s)=\low_{A'}(j)=\low_{A_{\out}}(j)$,
    which contradicts property R3 of $A_{\out}$. 
    Otherwise, we can find an earlier stage of the 
    complete column reduction with corresponding matrix $A''$ when a homogeneous column $s'$ is to be added to $s$ changing its pivot.
    Again, if $\low_{A_{\out}}(s')=\low_{A''}(s')$, we get a contradiction 
    by the same argument.
    Since we have a finite number of column additions, the 
    recursive argument has to stop giving us a contradiction.

    The reverse implication follows directly by Observation \ref{obs:non-homogeneous-columns} as a non-homogeneous column cannot become homogeneous.
    \qed
\end{proof}

Now, we prove
that {\sc ConMat} satisfies R2. We use the following notations.
As {\sc ConMat} keeps
adding columns from the left, the input matrix $A$ keeps changing. Denoting the input
matrix $A$ as $A_0$, let $A_j$ be the matrix
after processing the column $j$. 
Every column $s$ in $A_j$ represents a $p$-chain $D_s^{A_j}$ if $\sigma_s$ is a $p$-simplex. Initially, $D_s^{A_0}=\langle \sigma_s \rangle$ in $A_0$. After the column $s$ is processed, its chain changes. In particular, $D_s^{A_j}=D_s^{A_0}$ if $s> j$ and
$D_s^{A_j}=(\sum_{i}D_{j_i}^{A_j})+\langle\sigma_s\rangle$ where
columns $j_1<j_2<\cdots <j_t$ had been added to column $s\leq j$. 
We say a chain has \emph{support} on a simplex $\sigma$ if the chain has
non-zero coefficient on $\sigma$.
The \emph{content} of a column $s$ in $A_j$ represents
the boundary $(p-1)$-chain $\partial D_s^{A_j}$. Specifically, $A_j[i,s]=1$ if and only if
$\partial D_s^{A_j}$ has support on $\sigma_i$.
In analogy to our notation $\low_A(s)$ for a column $s$
in a matrix $A$, we denote $\low(D)$ to be the largest index $i$ so that
chain $D$ has support on $\sigma_i$. 

\begin{prop}\label{prop:Aout_is_R2}
    Matrix $A_{\out}$ computed by {\sc ConMat} satisfies 
    \emph{R1}, \emph{R2}, and \emph{R3}.
\end{prop}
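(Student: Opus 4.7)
The plan is to verify R1 and R3 directly from the structure of {\sc ConMat}, and then to prove R2 by passing to a complete reduction $\compl{A}_{\out}$ of $A_{\out}$ where the standard persistence machinery can be invoked.

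Property R3 is immediate: the innermost \textbf{for} loop iterates over every row index from the current pivot down to $1$, and at each entry with $A[i,j]=1$ it attempts to add a homogeneous $s<j$ with $\low_A(s)=i$ whenever one exists. If such an $s$ still existed in $A_{\out}$, the entry would have been cleared. Property R1 is verified similarly: were two homogeneous columns $s_1<s_2$ in $A_{\out}$ to share a pivot $i$, then during the processing of $s_2$ the inner loop would reach row $i$, find $s_1$ (already in its $A_{\out}$ form, homogeneous, with $\low_{A_{\out}}(s_1)=i$) as an admissible source, and reduce $s_2$'s pivot below $i$, contradicting $\low_{A_{\out}}(s_2)=i$.

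For R2, suppose toward contradiction that $j\in J_h(A_{\out})\cap J_t(A_{\out})$, and let $k$ be the (by R1 unique) homogeneous column with $\low_{A_{\out}}(k)=j$. The core of the argument is to show that pivots of homogeneous columns are preserved under complete reduction: for every $c\in J_h(A_{\out})$, $\low_{\compl{A}_{\out}}(c)=\low_{A_{\out}}(c)$. I prove this by strong induction on the processed column index. When column $c$ is being processed in the complete reduction of $A_{\out}$, any addition of some source $s<c$ that alters the pivot of the homogeneous $c$ must, by Proposition~\ref{prop:homogeneity_loss}, come from a homogeneous $s$ whose current pivot equals $\low_{A_{\out}}(c)$. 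The inductive hypothesis identifies this current pivot with $\low_{A_{\out}}(s)$, so that $\low_{A_{\out}}(s)=\low_{A_{\out}}(c)$; but by R1 for $A_{\out}$, the only homogeneous column with this pivot is $c$ itself, so $s=c$, contradicting $s<c$. Combined with Proposition~\ref{prop:hom_coincides_with_complete_reduction}, this yields $J_h(\compl{A}_{\out})=J_h(A_{\out})$ and $J_t(\compl{A}_{\out})=J_t(A_{\out})$.

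Finally, $\compl{A}_{\out}$ is obtained from the original filtered boundary matrix $A$ by a sequence of left-to-right column additions and has distinct pivots on its non-zero columns, so it is a complete reduction of $A$ in the standard persistence sense. A well-known consequence of $\partial^2=0$ in this setting is that the set of non-zero column indices (the ``destroyer'' simplices) is disjoint from the set of pivot-row indices (the ``creator'' simplices). Since $J_h(\compl{A}_{\out})$ lies in the former and $J_t(\compl{A}_{\out})$ in the latter, $J_h(\compl{A}_{\out})\cap J_t(\compl{A}_{\out})=\emptyset$; transferring along the preservation result completes the proof of R2. The main obstacle is the pivot-preservation inductive step, which must rule out indirect interference from non-homogeneous columns whose pivots can migrate during the complete reduction; the combination of Proposition~\ref{prop:homogeneity_loss} with R1 for $A_{\out}$ is precisely what closes this gap.
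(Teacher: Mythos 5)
Your proof is correct, and its overall strategy matches the paper's: R1 and R3 are read off from the inner loop, and R2 is proved by contradiction by passing to a complete reduction of $A_{\out}$ and combining Proposition~\ref{prop:hom_coincides_with_complete_reduction} with the standard consequence of $\partial^2=0$ that a pivot row of a nonzero column in a completely reduced matrix must itself index a zero column. Where you diverge is the intermediate step: you prove a pivot-preservation lemma --- every column of $J_h(A_{\out})$ retains its pivot under complete reduction, by induction combining Proposition~\ref{prop:homogeneity_loss} with the injectivity in R1 --- in order to get $J_t(\compl{A}_{\out})=J_t(A_{\out})$ and transfer the creator/destroyer disjointness back to $A_{\out}$. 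The paper sidesteps that lemma entirely: if a homogeneous $s$ were targetable, i.e.\ $s=\low_{A_{\out}}(j)$ for a homogeneous $j$, then $\partial D_j^{A_{\out}}$ is already a cycle in $K$ with lowest simplex $\sigma_s$, which forces column $s$ to vanish in any complete reduction and immediately contradicts preservation of homogeneity of $s$. Your extra lemma is sound (source columns to the left are in final form when used, and any pivot-changing source must be homogeneous by Proposition~\ref{prop:homogeneity_loss}, hence lies in $J_h(A_{\out})$ by Observation~\ref{obs:non-homogeneous-columns}, so R1 applies), so nothing is lost --- it is simply a slightly longer route to the same contradiction.
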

\begin{proof}
As we have already mentioned that R1 and R3 easily follow from the algorithm, only R2 requires non-trivial argument.

We prove by induction that, as the algorithm processes columns from left to right,
it never makes an already processed homogeneous column targetable.
Assume inductively that before processing a column $j$, there was no targetable column $s<j$ that is homogeneous. 
We show that after processing column $j$, the assertion remains true.
To prove this by contradiction, assume that after processing column $j$, a homogeneous column $s<j$ has become targetable.
Since in matrix $A_j$ no column $k>j$ has been modified yet, without loss of the generality, we can assume that $j$ is the last column of the matrix, that is $A_j=A_{\out}$.

\begin{enumerate}
    \item\label{it:j_is_a_cycle} Since $s$ is a targetable column for $j$, $\partial D_j^{A_j}$ is a cycle in $K$ with $\low(\partial D_j^{A_j})=s$.
    \item\label{it:cDs_is_a_cycle} Consider $\compl{A}_j$ -- a \emph{complete reduction} of $A_j$.
        Point(\ref{it:j_is_a_cycle}.) implies that $\sigma_s$ is the last simplex completing a cycle.
        This means $\compl{D}_s:=D_s^{\compl A_j}$ is a cycle in $K$. 
        In particular, $\partial\compl{D}_s=0$, which means that column $s$ in $\compl{A}_j$ consists only of zeros.
    \item Point(\ref{it:cDs_is_a_cycle}.) implies that $s$, as an empty column, is no longer a homogeneous column in $\compl{A}_j$, but $s$ is homogeneous in $A_j$, which contradicts Proposition \ref{prop:hom_coincides_with_complete_reduction}.
    \qed
\end{enumerate}
\end{proof}

\cancel{
\begin{prop}
    Algorithm {\sc ConMat} satisfies \emph{R1}, \emph{R2}, and \emph{R3}.
    \michal{We can merge this proposition with Proposition \ref{prop:Aout_is_R2} as suggested.}
    \label{prop:R1R2R3}
\end{prop}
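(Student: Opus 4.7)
The plan is to dispatch R1 and R3 quickly as immediate consequences of the reduction loop in {\sc ConMat}, and then concentrate all effort on R2, which is the substantive condition. For R3, the inner \textbf{for} loop visits every row index $i\leq\low_A(j)$ where $A[i,j]=1$ and, whenever a homogeneous $s<j$ with $\low_A(s)=i$ exists, adds column $s$ to $j$; after the loop terminates, no such $s$ can remain, so R3 holds for $A_{\out}$. For R1, once R3 is in place, no two distinct homogeneous columns in $A_{\out}$ can share a pivot (the later one would violate R3 with respect to the earlier), so the map $\alpha\colon j\mapsto\low_{A_{\out}}(j)$ on $J_h(A_{\out})$ is well-defined and injective; surjectivity onto $J_t(A_{\out})$ is exactly the definition of targetable, and well-definedness into $J_t(A_{\out})$ is automatic.

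The hard part is R2: no column can be simultaneously homogeneous and targetable in $A_{\out}$. I would argue by contradiction. Suppose some $s\in J_h(A_{\out})\cap J_t(A_{\out})$, so there is a column $j>s$ with $\low_{A_{\out}}(j)=s$. Expressed at the chain level, this says the boundary chain $\partial D_j^{A_{\out}}$ has $\sigma_s$ as its top-indexed supporting simplex. Because $\partial^2=0$, the chain $\partial D_j^{A_{\out}}$ is a cycle whose highest simplex is $\sigma_s$. The strategy is then to carry this information to a complete reduction $\compl{A}_{\out}$ of $A_{\out}$ and contradict Proposition~\ref{prop:hom_coincides_with_complete_reduction}.

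The key claim is that the existence of such a cycle forces the $s$-th column of $\compl{A}_{\out}$ to vanish. At the chain level, the $s$-th column of $\compl{A}_{\out}$ represents $\partial \compl{D}_s$ for a chain $\compl{D}_s$ obtained from $\langle\sigma_s\rangle$ by left-to-right additions of earlier columns. The cycle witnessed by $D_j^{A_{\out}}$ shows that $\sigma_s$ can be cancelled out of any top simplex expression using strictly earlier columns, so upon completing the reduction one can arrange $\partial\compl{D}_s=0$, that is, column $s$ in $\compl{A}_{\out}$ is zero. A zero column is by convention not homogeneous, so $s\notin J_h(\compl{A}_{\out})$. On the other hand, by assumption $s\in J_h(A_{\out})$. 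This directly contradicts Proposition~\ref{prop:hom_coincides_with_complete_reduction}, and the proof is complete.

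The main obstacle I expect is the cycle-to-zero-column step. One must argue cleanly that having a cycle with top simplex $\sigma_s$ exposed by a later column really does force column $s$ to vanish under any complete reduction, rather than leaving behind some spurious non-zero lower entries. The cleanest route is to work entirely at the chain level: use $\partial^2=0$ to show that $\partial D_j^{A_{\out}}+\langle\sigma_s\rangle - \langle\sigma_s\rangle$ together with available left additions builds a chain $\compl{D}_s$ with $\partial\compl{D}_s=0$. Packaging this bookkeeping so that one really invokes only left-to-right additions available during a complete reduction is the delicate point; once it is established, R2 follows immediately from Proposition~\ref{prop:hom_coincides_with_complete_reduction}.
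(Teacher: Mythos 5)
Your proposal is correct and follows essentially the same route as the paper: R1 and R3 are dispatched as immediate consequences of the reduction loop, and R2 is proved by contradiction by noting that a homogeneous column $s$ that is targetable by some column $j$ exhibits the cycle $\partial D_j^{A_{\out}}$ with top simplex $\sigma_s$, which forces column $s$ to vanish in a complete reduction and thereby contradicts Proposition~\ref{prop:hom_coincides_with_complete_reduction}. The ``cycle-to-zero-column'' step you flag as delicate is precisely the standard persistence pairing-uniqueness fact that the paper also invokes at the same point.
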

\begin{proof}
The fact that
$A_{out}$ satisfies R1 and R3 is immediate from the execution of {\sc ConMat} and can be rigorously
proved with the same argument as for Propositions \todo{**} in~\cite{DLMS24}. Proposition~\ref{prop:Aout_is_R2} above
shows that it satisfies R2 also.
\end{proof}
}
\begin{theorem}
    Given an $n\times n$ matrix of a filtered chain complex $(C,d)$ in a fixed $d$-admissible basis on input, {\sc ConMat}
    outputs a connection matrix of $(C,d)$ in $O(n^3)$ time. 
\end{theorem}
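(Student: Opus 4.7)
The plan is to combine Proposition~\ref{prop:Aout_is_R2} with the extraction result already established in \cite{DLMS24}, and to add a straightforward runtime analysis. Correctness splits naturally into two claims: \textbf{(a)} the matrix $A_{\out}$ returned by the reduction phase is a reduced $\PP$-filtered boundary matrix of $d$; and \textbf{(b)} the submatrix obtained by removing all indices in $J_h(A_{\out})\cup J_t(A_{\out})$ is a connection matrix of $(C,d)$.

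For \textbf{(a)}, I would invoke Proposition~\ref{prop:Aout_is_R2} to get R1, R2, R3, and then observe that every column operation in {\sc ConMat} is a left-to-right addition of a homogeneous column $s$ to a column $j$ with $s<j$; since $s$ must lie in a Morse set with filter grade $\leq_P \pgrad{P}{\langle\sigma_j\rangle}$, replacing $D_j^{A_j}$ by $D_j^{A_j}+D_s^{A_j}$ yields a $\PP$-filtered basis of $C$, and $A_{\out}$ remains a $\PP$-filtered matrix of $d$ in this basis. For \textbf{(b)}, I would apply the extraction theorem of \cite{DLMS24}, which states that for any reduced $\PP$-filtered boundary matrix of $d$, removing the rows and columns indexed by the union of $J_h$ and $J_t$ produces the matrix of a filtered chain complex that is filtered chain homotopic to $(C,d)$ and whose diagonal blocks vanish, i.e., a Conley complex. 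By Definition~\ref{def:conley-complex} and Theorem~\ref{thm:connection-matrix}, the returned submatrix is then a connection matrix of $(C,d)$.

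For the complexity, the outer \textbf{for} loop iterates $n$ times. For each target column $j$, the inner \textbf{for} loop visits at most $n$ row indices and, at each visited index, performs at most one $\ZZ_2$ column addition of cost $O(n)$. Locating an admissible source column $s<j$ with $\low_A(s)=i$ and $A[\cdot,s]$ homogeneous can be done in amortized $O(1)$ time by maintaining, per row index, a bucket of homogeneous columns whose current pivot equals that index; each column modification triggers only $O(1)$ bucket updates. Summing, the total runtime is $O(n)\cdot O(n)\cdot O(n)=O(n^3)$.

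The main conceptual hurdle is already dealt with in Proposition~\ref{prop:Aout_is_R2}: if R2 were to fail, the extraction step would be ill-defined, and without the corrective row additions of {\sc ConnectMat} one might worry that a homogeneous column could secretly become targetable. That is precisely what the comparison with a complete column reduction and Proposition~\ref{prop:hom_coincides_with_complete_reduction} rule out. The remainder of the argument is a direct combination of this structural fact with the pre-existing machinery of \cite{DLMS24} and an elementary cost count, so no further substantive ingredient is required.
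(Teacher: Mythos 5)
Your decomposition into \textbf{(a)} reduction-phase correctness plus \textbf{(b)} extraction, together with the cost count, matches the outline of the paper's argument, but step \textbf{(a)} contains a genuine gap, and it is precisely the point that constitutes the main new content of the paper's proof of this theorem. After the column additions, encoded by a matrix $V$, what {\sc ConMat} holds is $A_{\out}=AV$: its columns are expressed in the new basis $\{D_j^{A_{\out}}\}$, but its rows are still indexed by the original elementary chains. So $A_{\out}$ is \emph{not} ``a $\PP$-filtered matrix of $d$ in this basis'' as you assert; the matrix of the endomorphism $d$ in the new basis is the similarity transform $V^{-1}AV$, i.e., the matrix one would obtain by also performing the corresponding row additions that {\sc ConMat} deliberately omits. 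The extraction result of \cite{DLMS24} that you invoke in \textbf{(b)} applies to a reduced matrix whose rows and columns are expressed in a compatible basis, so it cannot be applied to $A_{\out}$ directly.

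The paper closes this gap by showing the omitted row additions are harmless: for each column addition of $s$ to $j$ with $s<j$, the induced addition of row $j$ to row $s$ \textbf{(i)} modifies only rows indexed by homogeneous columns, all of which are discarded at extraction; \textbf{(ii)} changes no pivot, since any modified entry of a column $k$ lies in row $s<j\leq\low_{A}(k)$; and \textbf{(iii)} does not alter the sequence of column additions, because by R2 (Proposition~\ref{prop:Aout_is_R2}) a homogeneous column $s$ is never targetable, so row $s$ never carries the pivot of another column. These facts give $J_h(A_{\out})=J_h(V^{-1}AV)$ and $J_t(A_{\out})=J_t(V^{-1}AV)$ and show that the two matrices agree on the rows and columns indexed by $J=\II_n\setminus J_h\setminus J_t$, which is what licenses applying the extraction result of \cite{DLMS24} and concluding that the returned submatrix is a connection matrix. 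Without some version of this argument, your proof does not establish that the returned submatrix is the matrix of the boundary map of any chain complex in a single basis, let alone of a Conley complex. Your runtime analysis is fine and consistent with the paper's.
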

\begin{proof}
    Let $A'$ be the input matrix $A$ at any stage of {\sc ConMat}.
    Suppose that after addition of a column $s$ to $j$ we also made the corresponding row addition of $j$ to $s$. 
    Observe that the row addition
        \textbf{(i)} affects only entries of the row $s$, corresponding to a homogeneous column,
        \textbf{(ii)} does not change the pivot of any column $k$, because if $A'[j,k]=1$ then $s\neq \low_{A'}(k)$ because $s<j$,
        \textbf{(iii)} does not affect further additions, because by Proposition \ref{prop:Aout_is_R2}, column $s$ is not a pivot for any other column.

        Thus, we could perform all corresponding row additions, and, by \textbf{(iii)} the sequence of column operations remains the same as without them.
        In particular, let $V$ and $U$ denote the matrices encoding column and row additions, respectively.
        Then, we have $A_{\out}=A V$ and $A_{\out}^r:=U A V$.
        Since $A_{\out}$ is reduced by Proposition~\ref{prop:Aout_is_R2} (satisfies R1, R2, and R3), it is easy to conclude from \textbf{(i)}, \textbf{(ii)}, and \textbf{(iii)} that $A_{\out}^r$ is reduced as well.
        Moreover, the columns and rows of $A_{\out}^r$ are expressed with compatible basis, which allow us to apply \cite[Proposition 4.9]{DLMS24} inductively to extract columns and rows corresponding to 
        $J:=\II_n\setminus J_{h}(A)\setminus J_{t}(A)$
        to obtain a connection matrix of $(C,d)$. 

        Notice that due to \textbf{(ii)} we have $J_{h}(A_{\out})= J_{h}(A_{\out}^r)$ and $J_{t}(A_{\out})= J_{t}(A_{\out}^r)$.
        Moreover, by \textbf{(i)} and \textbf{(iii)}, matrices $A_{\out}^r$ and $A_{\out}$ restricted to rows and columns corresponding to $J$ are equal.
        Therefore, {\sc ConMat} computes the connection matrix without the row additions.

        The inner \textbf{for} loop of {\sc ConMat} runs in $O(n^2)$ time (similar to the persistence algorithm) for the column additions.
        Thus, in total the algorithm has $O(n^3)$ time complexity. \qed
\end{proof}

\cancel{
\begin{theorem}
Given an $n\times n$ matrix of a filtered chain complex $(C,d)$ in a fixed $d$-admissible basis on input, {\sc ConMat}
outputs a connection matrix of $(C,d)$ in $O(n^3)$ time. 
\label{thm:main}
\end{theorem}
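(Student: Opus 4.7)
The plan is to bridge {\sc ConMat} to the already-established correctness of {\sc ConnectMat} by arguing that the row additions performed by {\sc ConnectMat} but omitted in {\sc ConMat} can be reintroduced purely as bookkeeping without altering either the sequence of column additions or the final extracted submatrix. First I would run {\sc ConMat} on the input and, for analysis only, accompany each column addition of $s$ into $j$ with a ghost row addition of row $j$ into row $s$. Let $A_{\out}$ be the matrix actually produced by {\sc ConMat} and $A_{\out}^r$ the hypothetical matrix produced if we had also performed these row additions; the goal is to show that (i) $A_{\out}^r$ is produced by a bona fide {\sc ConnectMat}-style reduction to which Proposition~4.9 of \cite{DLMS24} applies, and (ii) restricting both $A_{\out}^r$ and $A_{\out}$ to indices in $J:=\II_n\setminus J_h\setminus J_t$ gives the same submatrix.

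The crux of step~(i) is that, even with the ghost row additions in place, the sequence of column additions performed by the algorithm is unchanged. I would argue this using Proposition~\ref{prop:Aout_is_R2}: every source column $s$ is homogeneous and hence lies in $J_h$; by R1 and R2 it is never a pivot of any column throughout the reduction, so adding row $j$ into row $s$ cannot change any $\low_{A'}(k)$ and cannot create or destroy any conflict in the inner \textbf{for} loop. Moreover, the modified entries are confined to rows indexed in $J_h$, so they are invisible to the restriction to $J$. These two facts together furnish step~(ii): $A_{\out}^r$ and $A_{\out}$ agree on the $J$-submatrix. Writing $A_{\out}^r = U A V$ with $V$ and $U$ the column and row operation matrices respectively establishes that $A_{\out}^r$ is a reduced matrix in a basis compatible with the row basis, precisely the hypothesis of \cite[Proposition 4.9]{DLMS24}.

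To finish, I would invoke \cite[Proposition~4.9]{DLMS24} inductively over the pairs in $J_h\times J_t$, peeling off one homogeneous/targetable pair at a time; each application preserves the filtered chain homotopy type while eliminating a homogeneous column and its targetable pivot row. The residual matrix on indices $J$ has zero diagonal blocks and therefore represents a Conley complex of $(C,d)$, so by Definition~\ref{def:conley-complex} it is a connection matrix. By step~(ii), this is exactly the matrix {\sc ConMat} returns.

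For the runtime, the analysis mirrors the exhaustive persistence algorithm: the outer loop runs over $n$ columns, the inner loop traverses $O(n)$ rows, and each triggered column addition costs $O(n)$, giving $O(n^3)$. The subtlest step I anticipate is making the non-interference argument airtight: one must explicitly verify that neither the pivot-conflict check nor the homogeneity check for the selected source column $s$ can be affected by the ghost row additions, and this is precisely where R2 (proved in Proposition~\ref{prop:Aout_is_R2}) is indispensable, since it rules out the possibility that a homogeneous source column ever doubles as a pivot and thereby propagates the row change into live column data.
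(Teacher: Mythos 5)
Your proposal follows essentially the same route as the paper's proof: reintroduce the omitted row additions as pure bookkeeping, use Proposition~\ref{prop:Aout_is_R2} to argue that they neither alter the sequence of column additions nor touch the entries surviving the restriction to $J$, factor the reduction as $A_{\out}^r=UAV$, and apply \cite[Proposition 4.9]{DLMS24} inductively to extract the connection matrix, with the same $O(n^3)$ operation count. The only (minor) difference is in how pivot-invariance is justified: where you lean on ``row $s$ is never a pivot,'' the paper argues more directly that adding row $j$ to row $s$ only alters entries in columns $k$ having a $1$ in row $j$, for which $\low_{A'}(k)\geq j>s$, so no pivot can move.
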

\begin{proof}
In algorithm {\sc ConMat},
we made two main changes, (i) there are only left-to-right column additions, (ii) there are no row additions as in step 8 of {\sc ConnectMat}. First, we observe that even if we had added the row additions in {\sc ConMat}, it would not have changed the extracted connection matrix. This is because for every column $s$ being added to a column $j$ in {\sc ConMat}, we have $s<j$ where $s$ is a homogeneous column. 
It would have triggered addition of the row $j$ to row $s$. For any column $k$ this row addition does not
change the pivot row $\low_A(k)$ since $j>s$.
This means that no row addition would have made a difference in the homogeneity of the columns (this is the not the case for {\sc ConnectMat}). 
Next, observe that all left-to-right column and bottom-to-top row additions can be summarized 
as multiplying $A$ with an
upper triangular matrix $V$ from right and a lower triangular matrix $U$ from left respectively, that is,
$A_{out}=UAV$. This means that we could have done all row additions after all column operations without affecting $A_{out}$. In that case, since row additions only affect the rows whose corresponding columns are homogeneous, the final connection matrix which eliminates the homogeneous columns and rows is not affected by the row operations. Therefore, eliminating the row additions
in {\sc ConMat} does not affect the final output.

Given the above observations, the argument in Theorem 4.2 of \cite{DLMS24} only requires us to prove that $A_{out}$ satisfies
R1, R2, and R3 after all left-to-right column reductions which follows from Proposition~\ref{prop:R1R2R3}.
\end{proof}
}

\subsection{Invariance to Morse order}
In this section, we make an observation that may be useful on its own right.
We observe that the order of Morse sets (as long as it preserves the poset and keeps the order within each Morse set) does not affect the output connection matrix; see Theorem~\ref{thm:morsefixed} for a precise
statement. The fact that the order within Morse sets matters is shown with
the example in Figure~\ref{fig:annulus}. 
Indeed, in~\cite{DLMS24}, it is shown that two different orders of the simplices in the Morse set for the orbit gives two connection matrices violating Theorem~\ref{thm:morsefixed}.

\begin{definition}
Let $A$ and $A'$ be two possible input $P$-filtered matrices for {\sc ConMat}.
We say $A$ and $A'$ are
\emph{Morse fixed} if the elementary chain bases $(\langle \sigma_{1}^p\rangle,\ldots, \langle \sigma_{k}^p\rangle)$ 
of $A$ and $A'$ restricted to every Morse set $M_p$, $p\in P$,
have the same linear order. We write $i=_\sigma i'$ if $\langle \sigma_i\rangle$
and $\langle \sigma_{i'}\rangle$ represent the same basis elements
in the bases for $A$ and $A'$ respectively. We also write $[i]_P$ to denote
the point $p\in P$ so that $\sigma_i\in M_p$.
\end{definition}

Notice that $A$ and $A'$ that are Morse fixed may differ in the ordering of
the columns and rows across Morse sets because they may correspond to
two different linear extensions of the poset $P$. Proposition~\ref{prop:morsefixed} essentially
says that the output of {\sc ConMat} does not change on algebraical level with different linear extensions
of $P$ as long as they do not alter the ordering of simplices within
each Morse set. We prove Proposition~\ref{prop:morsepath} that helps us to
prove Proposition~\ref{prop:morsefixed}.

\begin{prop}
    For every stage of {\sc ConMat}, $A_i[l,j]\neq 0$ implies $[l]_P\leq_P [j]_P$.
    Moreover, if {\sc ConMat} adds column $k$ to column $j$ during its execution, then
    $[k]_P\leq_P [j]_P$.
    \label{prop:morsepath}
\end{prop}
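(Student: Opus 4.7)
The plan is to prove both assertions simultaneously by induction on the number of column additions performed by {\sc ConMat}. As the base case, $A_0$ is the input boundary matrix, which is $\PP$-filtered by assumption; hence $A_0[l,j]\neq 0$ implies $[l]_P\leq_P [j]_P$, and the ``moreover'' claim is vacuous since no additions have yet occurred. For the inductive step, I would fix a moment in the execution and let $A$ denote the matrix just before the next column addition. I would assume as the inductive hypothesis that $A[l,j']\neq 0$ implies $[l]_P\leq_P [j']_P$ for every entry, and that every previous addition of a column $k$ to a column $j'$ satisfied $[k]_P\leq_P [j']_P$.

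The next addition performed by {\sc ConMat} takes a homogeneous column $s$ with $s<j$ such that $\low_A(s)=i$ and $A[i,j]=1$, and adds $s$ to $j$. For the ``moreover'' claim, I would use the inductive hypothesis applied to the entry $A[i,j]=1$ to conclude $[i]_P\leq_P [j]_P$. Because $s$ is homogeneous in $A$, Definition~\ref{def:alg_notations} gives $[s]_P=[\low_A(s)]_P=[i]_P$, so $[s]_P\leq_P [j]_P$, which establishes the second assertion for this addition.

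For the first assertion after the addition, only column $j$ can gain non-zero entries, and each new non-zero $A'[l,j]$ arises from an index $l$ with $A[l,s]\neq 0$ or $A[l,j]\neq 0$ (before the addition). The inductive hypothesis yields $[l]_P\leq_P [s]_P$ in the former case and $[l]_P\leq_P [j]_P$ in the latter, and transitivity together with $[s]_P\leq_P [j]_P$ from the previous paragraph gives $[l]_P\leq_P [j]_P$ in either case. Thus the filtered property propagates through the addition, completing the induction. The only place where any genuine observation is needed is the identification $[s]_P=[\low_A(s)]_P$ coming from homogeneity of $s$; this is precisely why the algorithm's homogeneity restriction on the source column is the key structural ingredient making the invariant stable under left-to-right column operations, and no further obstacle arises.
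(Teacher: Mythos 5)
Your proposal is correct and follows essentially the same route as the paper's proof: induction on the column additions, using the filteredness of the input matrix as the base case, and in the inductive step combining the nonzero entry at the source's pivot row in the target column with the homogeneity identity $[s]_P=[\low_A(s)]_P$ to get $[s]_P\leq_P[j]_P$, then propagating to new entries by transitivity. No substantive difference from the paper's argument.
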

\begin{proof}
    We prove the proposition by induction.
    The claim is true for $i=0$ because $A_0$ represents the boundary homomorphism that is $P$-filtered.
    In particular, if $A_0[l,j]\neq 0$ then there is a trivial path from $\sigma_j$ to $\sigma_l$ ($\sigma_l\in F_\cV(\sigma_j)$);
        therefore $[l]_P\leq[j]_P$.
    Assume that $A_{i-1}$ satisfies the assumption.
    Note that by passing to $A_{i}$ only $i$th column is modified. 
    Let $k$ be a column that is added to column $i$.
    Let $l$ be a row with a non-zero entry in the column $i$ after the addition of column $k$.
    Then, either the entry becomes non-zero as the result of the addition ($A_{i-1}[l,i]= 0$ and $A_{i}[l,i]\neq 0$), or it was non-zero already before the addition ($A_{i-1}[l,i]=A_{i}[l,i]\neq0$).
    The later case follows from the inductive assumption.
    To see the former case, note that 
        if $m$ is the pivot of $k$, then necessarily, $A_{i-1}[m,k]=A_{i-1}[m,i]\neq 0$.
    Therefore, by the inductive assumption $[m]_P\leq [i]_P$.
    Since $k$ is a homogeneous column, we have $[k]_P=[m]_P$.
    If the addition of $k$ made the entry $A_{i}[l,i]$ non-zero, 
        we have $A_{i-1}[l,k]\neq 0$, implying $[l]_P\leq_P [k]_P$.
    In consequence, $[l]_P\leq_P [k]_P=[m]_P\leq_P[i]_P$.
    \qed
\end{proof}

\begin{prop}
Let $A_{out}$ and $A'_{out}$ be the matrices produced by {\sc ConMat} after
reducing Morse fixed input matrices $A$ and $A'$ respectively. Then,
  $A_{out}[i,j]=A'_{out}[i',j']$ where $i=_\sigma i'$ and $j=_\sigma j'$. 
  Furthermore, if $\ell$ is the pivot row of a homogeneous column $j$ in $A_{out}$,
  then $\ell'=_\sigma \ell$ is the pivot row of the homogeneous column $j'$ in $A_{out}'$.
\label{prop:morsefixed}
\end{prop}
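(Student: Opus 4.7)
The plan is to reduce the statement to the case of swapping two adjacent, $\leq_P$-incomparable Morse sets in the chosen linear extension, then to argue via Proposition~\ref{prop:morsepath} that such a swap cannot change the behavior of {\sc ConMat} up to the identification $=_\sigma$. First I would observe that any two linear extensions of $(P,\leq_P)$ that agree with the common intra-Morse-set order of $A$ and $A'$ can be connected by a finite sequence of transpositions of consecutive, $\leq_P$-incomparable Morse sets. An induction on the length of this sequence then reduces everything to the case where $A'$ is obtained from $A$ by swapping two consecutive Morse-set blocks $M_p$ and $M_q$ with $p$ and $q$ incomparable in $P$.

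The key structural observation, which I would invoke throughout, is a direct consequence of Proposition~\ref{prop:morsepath}: at every stage of {\sc ConMat}, every non-zero entry $A_i[\ell,j]$ satisfies $[\ell]_P \leq_P [j]_P$, and any column addition $k \to j$ satisfies $[k]_P \leq_P [j]_P$. Since $p$ and $q$ are incomparable, no column from $M_p$ is ever added to a column from $M_q$ and vice versa. In particular, when processing a column $j \in M_q$, the candidate homogeneous source columns with matching pivot lie strictly in the common prefix preceding both blocks or within $M_q$ itself, regardless of whether $M_p$ precedes or follows $M_q$ in the linear order; the analogous statement holds for columns in $M_p$.

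Next I would induct on the left-to-right column processing order, maintaining the invariant that the state of $A$ after processing a column and the state of $A'$ after processing its $=_\sigma$-counterpart agree entry-wise under $=_\sigma$. For columns inside $M_p$, the reductions involve only matrix data from $M_p$ and from the common prefix, which by the induction hypothesis is already matched under the identification; the same holds for $M_q$. Hence the sequences of column operations performed inside each block are identical up to relabeling, and their interleaving across the two possible orderings of the two blocks is immaterial. For columns succeeding both blocks, the reductions depend only on already-processed entries that agree by the inductive hypothesis, so the invariant propagates to $A_{out}$ and $A'_{out}$.

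For the pivot assertion, suppose $\ell$ is the pivot of a homogeneous column $j$ in $A_{out}$, so $\sigma_\ell$ and $\sigma_j$ lie in a common Morse set $M_r$; entry-wise agreement yields $A'_{out}[\ell'_\sigma, j'_\sigma]=1$ where $\ell'_\sigma =_\sigma \ell$ and $j'_\sigma =_\sigma j$. Any other non-zero entry $A_{out}[i,j]=1$ satisfies $[i]_P \leq_P r$: if $[i]_P \neq r$ then in any linear extension $\sigma_i$ precedes $\sigma_\ell$, and if $[i]_P = r$ then $i < \ell$ forces $i'_\sigma < \ell'_\sigma$ because the intra-Morse-set order is preserved by hypothesis. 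Hence $\ell'_\sigma$ remains the pivot of $j'_\sigma$ in $A'_{out}$. The main technical obstacle I anticipate is the bookkeeping in the inductive step: one must verify at every iteration of the inner \textbf{for} loop of {\sc ConMat} that the candidate set $\{s < j : \low_A(s)=i,\ A[\cdot,s]\text{ homogeneous}\}$ transports cleanly under $=_\sigma$, and this is precisely where the cross-block isolation supplied by Proposition~\ref{prop:morsepath} is essential.
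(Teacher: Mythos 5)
Your proposal is correct, and its engine is the same as the paper's: an induction over the columns in processing order, driven by Proposition~\ref{prop:morsepath}, which guarantees that any source column $k$ added to $j$ satisfies $[k]_P\leq_P[j]_P$ and hence has a counterpart $k'=_\sigma k$ with $k'<j'$. The genuine difference is your outer layer: you first decompose the passage from $A$ to $A'$ into adjacent transpositions of consecutive $\leq_P$-incomparable Morse sets and prove the claim only for a single swap. The paper skips this reduction and runs the induction directly on an arbitrary Morse-fixed pair: if \textsc{ConMat} adds $k$ to $j$ in $A$, then $[k']_P=[k]_P\leq_P[j]_P=[j']_P$ forces $k'<j'$ under \emph{any} linear extension, so the inductive hypothesis (entrywise agreement and correspondence of pivots of homogeneous columns for all previously processed columns) shows $k'$ is a valid source for $j'$, and the reverse containment follows by exchanging the roles of $A$ and $A'$ --- this two-directional application is exactly how the paper resolves the transport of the candidate set that you flag as the main obstacle. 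Your transposition reduction buys a cleaner local picture (only two incomparable blocks to isolate at a time) at the price of an extra combinatorial lemma on connectivity of linear extensions under adjacent swaps and a transitivity step for $=_\sigma$; the paper's version shows both are dispensable, since the only facts it ever needs about the second ordering are precisely the ones your ``key structural observation'' already provides for arbitrary extensions. Your closing argument for the pivot assertion (entries strictly below $[j]_P$ in the poset precede all rows of $M_{[j]_P}$ in every extension, while ties within $M_{[j]_P}$ are broken identically by the Morse-fixed hypothesis) matches the paper's.
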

\begin{proof}
    We prove the claim by inducting on $j$ where $A_j$ represents
    the processed matrices $A$ up to
    column $j$.
    
    The first column ($j=0$) corresponds to a vertex,
    thus has no non-zero entry in both matrices $A_{out}$ and $A'_{out}$.
    This confirms the base case. For induction, assume that the
    hypothesis is true for $A_{j-1}$. Consider processing the column $j$
    in $A$. Let $j'=_\sigma j$ be the column in $A'$ corresponding to column
    $j$ in $A$. If the algorithm {\sc ConMat} adds a column $k$ to
    column $j$, we have (i) $k <j$,  (ii) 
    $[k]_P \leq [j]_P$
    by Proposition~\ref{prop:morsepath}, and (iii) column $k$ is homogeneous. Let $k=_\sigma k'$. Then, 
    $[k']_P=[k]_P\leq_P [j]_P=[j']_P$ 
    and hence $k'< j'$ in $A'$.
    By inductive hypothesis,
    $A_{out}[i,k]= A'_{out}[i',k']$ for all $i=_\sigma i'$.
    Also, since $k$ is a homogeneous column,
    so is $k'$ by inductive hypothesis and their pivot rows, say $\ell$ and $\ell'$
    satisfy $\ell=_\sigma \ell'$. 
    It follows that when {\sc ConMat} processes the column $j'$, it also
    adds column $k'$ to $j'$. Therefore, {\sc ConMat} adds every column 
    to column $j'$ while processing $A'$ whose corresponding columns it adds to $j$ while processing
    $A$. Reversing the roles of $A$ and $A'$, we can claim that {\sc ConMat}
    adds every column to $j$ while processing $A$ whose corresponding columns it adds to $j'$ while
    processing $A'$. Essentially, this means that $A_{out}[i,j]=A'_{out}[i',j']$
    where $i=_\sigma i'$ and $j=_\sigma j'$. Furthermore, since the order
    of the simplices within each Morse set remains unaltered, the pivot rows of $j$ and $j'$ (if they remain homogeneous) 
    correspond to the
    same simplex completing the induction. \qed
\end{proof}

\begin{theorem}
    Let $S$ and $S'$ be two connection matrices output by {\sc ConMat} from
    the two Morse fixed input matrices $A$ and $A'$ respectively. Then, $S[i,j]=S'[i',j']$ where $i=_\sigma i'$ and $j=_\sigma j'$.
    \label{thm:morsefixed}
\end{theorem}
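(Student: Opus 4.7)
The plan is to deduce Theorem~\ref{thm:morsefixed} as a fairly direct corollary of Proposition~\ref{prop:morsefixed}, since that proposition has already done the real work of pairing up entries of $A_{\out}$ and $A'_{\out}$ under the basis identification $=_\sigma$. What remains is purely bookkeeping: verifying that the index sets being removed by the extraction step ($J_h$ and $J_t$) also correspond under $=_\sigma$, so that the restriction commutes with the identification of entries.

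First, I would show that the homogeneous index sets match under $=_\sigma$: $j \in J_h(A_{\out})$ if and only if $j' \in J_h(A'_{\out})$, whenever $j =_\sigma j'$. The forward direction uses the second conclusion of Proposition~\ref{prop:morsefixed}: if $j$ is homogeneous with pivot row $\ell = \low_{A_{\out}}(j)$, then $j'$ is homogeneous in $A'_{\out}$ with pivot row $\ell' =_\sigma \ell$. Because $=_\sigma$ identifies simplices within each Morse set, $\sigma_\ell$ lies in the same Morse set as $\sigma_j$ iff $\sigma_{\ell'}$ lies in the same Morse set as $\sigma_{j'}$. The reverse direction follows by symmetry, swapping the roles of $A$ and $A'$ (Proposition~\ref{prop:morsefixed} is stated symmetrically).

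Next, I would establish the analogous correspondence for the targetable sets: $k \in J_t(A_{\out})$ iff $k' \in J_t(A'_{\out})$ for $k =_\sigma k'$. By property R1 of a reduced matrix, $J_t$ is exactly the image of $J_h$ under the pivot map $\alpha$. Since both the homogeneous columns (by the previous step) and their pivot rows (by Proposition~\ref{prop:morsefixed}) correspond under $=_\sigma$, so do the targetable indices. Consequently, the index sets $J = \II_n \setminus J_h(A) \setminus J_t(A)$ for $A$ and $J' = \II_n \setminus J_h(A') \setminus J_t(A')$ for $A'$ correspond under $=_\sigma$.

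Finally, applying Proposition~\ref{prop:morsefixed} to any pair of corresponding indices $i =_\sigma i'$ and $j =_\sigma j'$ with $i,j \in J$, $i',j' \in J'$, gives $A_{\out}[i,j] = A'_{\out}[i',j']$; since $S$ and $S'$ are precisely the restrictions of $A_{\out}$ and $A'_{\out}$ to these index sets, $S[i,j] = S'[i',j']$ as required. There is no real obstacle here — the only subtlety is being explicit that $=_\sigma$ preserves Morse set membership (which is built into the definition of Morse fixed), so that homogeneity is transported across the identification. The heavy lifting has already been done in Proposition~\ref{prop:morsefixed}.
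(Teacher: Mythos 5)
Your proposal is correct and follows essentially the same route as the paper: the paper's own (very brief) proof likewise invokes Proposition~\ref{prop:morsefixed} to conclude that homogeneity and targetability are preserved under the identification $=_\sigma$, and then observes that the restricted entries agree. Your additional detail about $J_t$ being the image of $J_h$ under the pivot map merely makes explicit what the paper leaves implicit.
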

\begin{proof}
    It follows from Proposition~\ref{prop:morsefixed} that a column (row) $j$ remains homogeneous or targetable in $A_{out}$ if and only if
    $j'$ with $j'=_\sigma j$ is homogeneous or targetable in $A_{out}'$. Then, the
    claim follows from the fact that $A[i,j]=A'[i',j']$ where $i=_\sigma i'$ and
    $j=_\sigma j'$. \qed
\end{proof}
The following corollary is immediate for the two connetion matrices $S$ and $S'$ mentioned in the above theorem.
\begin{corollary}
    There exists a permutation matrix $R$ so that $R^T S R=S'$.
\end{corollary}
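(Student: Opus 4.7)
The plan is to promote the index bijection implicit in Theorem~\ref{thm:morsefixed} to an actual permutation matrix, and then check that the required matrix identity falls out from the formula relating simultaneous row/column permutation to conjugation by a permutation matrix.

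First, I would identify the index sets of $S$ and $S'$. Write $J\subset \II_n$ and $J'\subset \II_n$ for the sets of indices of $A_{out}$ and $A'_{out}$ that are neither homogeneous nor targetable, so that $S$ and $S'$ are the submatrices of $A_{out}$ and $A'_{out}$ supported on $J$ and $J'$ respectively. In the proof of Theorem~\ref{thm:morsefixed} it is already observed that an index $j$ is homogeneous (resp.\ targetable) in $A_{out}$ exactly when its $=_\sigma$ partner $j'$ is homogeneous (resp.\ targetable) in $A'_{out}$; hence the relation $=_\sigma$ restricts to a well-defined bijection $J\leftrightarrow J'$. After identifying $J$ and $J'$ with $\{1,\ldots,m\}$ by their inherited orderings inside $A_{out}$ and $A'_{out}$ (which in general differ, since $A$ and $A'$ may correspond to distinct linear extensions of $P$), this bijection becomes a permutation $\pi$ of $\{1,\ldots,m\}$.

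Next, I would take $R$ to be the permutation matrix of $\pi$, namely $R[k,\ell]=1$ iff $k=\pi(\ell)$. A direct computation then gives $(R^{T} S R)[i,j] = S[\pi(i),\pi(j)]$ for all $i,j\in\{1,\ldots,m\}$. By the way $\pi$ was defined, the pair $(\pi(i),\pi(j))$ is exactly the pair of positions in $S$ whose basis elements are $=_\sigma$-equal to those at positions $(i,j)$ in $S'$. Applying Theorem~\ref{thm:morsefixed} to this pair then yields $S[\pi(i),\pi(j)]=S'[i,j]$, so $R^{T} S R = S'$.

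There is no real obstacle here: the only subtle point is keeping the two layers of indexing straight (global position within $A_{out}$ versus position within the $J$-restricted connection matrix $S$), which is why I would phrase $\pi$ as a permutation of $\{1,\ldots,m\}$ rather than of $\II_n$. Once the bijection on the restricted index set is in place, the statement reduces to the standard fact that simultaneous row/column permutation of a square matrix coincides with conjugation by the corresponding permutation matrix.
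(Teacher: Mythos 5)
Your argument is correct and is exactly the elaboration the paper has in mind: the paper states this corollary as ``immediate'' from Theorem~\ref{thm:morsefixed} without writing out a proof, and your construction of $\pi$ from the $=_\sigma$ bijection restricted to the retained index sets, followed by the standard identity $(R^{T}SR)[i,j]=S[\pi(i),\pi(j)]$, is the intended justification. The care you take in distinguishing global indices in $A_{out}$ from positions within the $J$-restricted submatrix is a worthwhile clarification, not a deviation.
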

If the input matrices $A$ and $A'$ are not Morse fixed, the above corollary may or may not hold. For the example in Figure~\ref{fig:matrixalgo}, one can verify that the output connection matrices are the same up to a permutation even though input matrices are not kept Morse fixed; see the two connection matrices shown in~\cite{DLMS24}.
However, the example shown below establishes that the two connection matrices may not
be equivalent up to a permutation.
\begin{figure}[htbp]
\centerline{\includegraphics[width=0.75\textwidth, keepaspectratio]{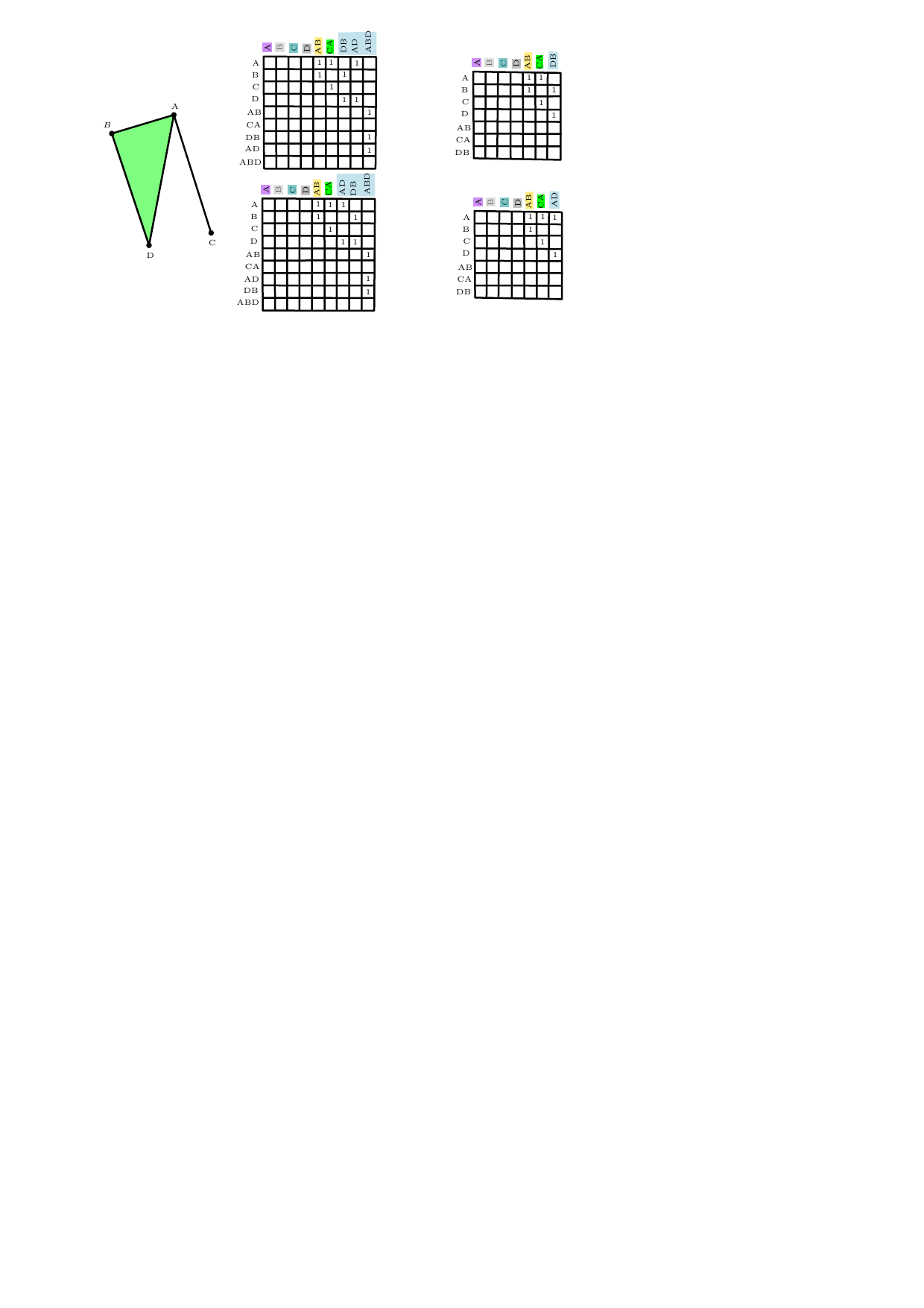}}
\caption{(Minimum) Morse decomposition $\{\{A\},\{B\},\{C\},\{D\}, \{AB\}, \{CA\},\{AD,DB,ABD\}\}$, (middle) two matrices with two different orders on the Morse set
$\{AD,DB,ABD\}$, (right) two connection matrices which cannot be transformed to the other by a permutation.
}
\label{fig:PermutMorseFix}
\end{figure}
\cancel{
\tamal{\section{Updating connection matrix}
There are three operations: (i) Extend a Morse set $M$ on right by one
more column, (ii) Extend $M$ on left by one column, (iii) Exchange two columns within the same Morse sets.
\begin{remark} Exchanging two columns across two Morse sets can be done using the three operations. Let the column $c_1$ on left belong to Morse set $M_1$
which is to be exchanged with the column $c_2$ on right belonging to Morse
set $M_2$. First, extend $M_1$ on right, exchange $c_1$ and $c_2$, extend $M_2$ on left. 
\end{remark}
(i) Extend $M$ on right: Let $c$ be the column just after $M$ on right.\\
{\bf Case a}: $c$ is non-homogeneous: After including $c$ into $M$, if it remains to
be non-homogeneous, nothing needs to be done. Otherwise, $c$ has now become
homogeneous.
\begin{itemize}
\item  Take out the column $c$ and its target column $\low(c)$
from the connection matrix. 
\item $c$ could be added to columns on right that has a $1$ in the
row $\low(c)$. We collect all such columns and add $c$ to them. We claim
that, for a column $c'$ no column other than $c$ needs to eb added. If this were not true, after addition with $c$, the column $c'$ would have a $1$ in a row that is equal to $\low(b)$ where $b$ is a homogeneous column to the left of $c$. The column $c'$ aquired this $1$ after addition with $c$, so the column $c$ had a $1$ in the row $\low(b)$ which should have triggered an addition of $b$ to $c$ to eliminate that $1$.
\item Before moving $c$, it could be a target of a homogeneous column $d$, that is $\low(d)=c$. We make $d$ non-homogeneous and update the connection matrix. We add $d$ to any column $d'$ on right to which $d$ was added before. The columns like $d'$ can be determined from the $V$ matrix. We consider the status of $d'$ before and after the addition with $d$. 
\begin{itemize}
\item First assume $d'$ was homogeneous.
Then, it cannot be the case that $\low(d)=c>\low(d')$ before because then column for 
$\low(d')$ will be to the left of $c$ and then $d'$ could not be homogeneous. Also, $\low(d)=\low(d')$ is not allowed because in that case addition of $d$ to $d'$ before would have caused $\low(d')<\low(d)$. So, the only possibility is that $\low(d)<\low(d')$. Then, adding $d$ back to $d'$ cannot make $d'$ non-homogeneous and also cannot change $\low(d')$.
Thus, $d'$ does not have to added to any more columns to its right because of being changed by addition of $d$ to it.
\item Now assume that $d'$ was not homogeneous. Since $d$ was homogeneous, $\low(d)>\low(d')$. Then,
by adding $d$ to $d'$, we keep $d'$ non-homogeneous because $\low(d')=\low(d)=c$ after the addition.
\end{itemize}
\end{itemize}
{\bf Case b}: $c$ is homogeneous: This is not possible. The leftmost column of any
Morse set cannot be homgeneous because $\low(c)$ belongs to a Morse set to the left of $M$ and cannot belong to $M$.}\\

\tamal{(ii): Extend $M$ on left: Let $c$ be the column just before $M$ on left.\\
{\bf Case a}: $c$ is non-homogeneous. In this case $c$ remains to be non-homogeneous
after being included into the Morse set $M$. So, nothing needs to be done with column $c$. However, including $c$ into $M$ may make some other column $c'$ in $M$ to be homogeneous. Then, $c'$ needs to be added to all columns conflicting with it. Using the previous arguments, one can show that no more additions will be necessary. In this case, take out $c'$ and its target column out of the connection matrix.\\
{\bf Case b}: $c$ is homogeneous. In this case $c$ necessarily becomes
non-homogeneous after being included into $M$.
\begin{itemize}
    \item  We make $c$ non-homogeneous and its target non-targettable; update the connection matrix by bringing these two columns. 
    \item We find out the columns $c'$ to which $c$ were added. This can be found out by maintaining the $V$ matrix. Add $c$ to these columns. We claim that no more columns need to be added to $c'$ because $c$ is made non-homogeneous. If there is a homogeneous column $b$ with $c'$ having a $1$ in the row $\low(b)$ after addition with $c$, then $c$ also has a $1$ in the row $\low(b)$. This means $b$ belongs to a Morse set to the left of $M$ because $c$ belonged to a Morse set to the left of $M$ before moving to $M$. We reach a contradiction because $b$ being a homogeneous column would have been added to $c$ to eliminate the $1$ in row $\low(b)$. 
\begin{itemize}
\item observe that if $c'$ were homogeneous, it remains to be homogeneous 
with the same $\low(c')$ because $\low(c) < \low(c')$ (because $\low(c')$ is in $M$ and $\low(c)$ is to the left of $M$.
\item If $c'$ were non-homogeneous, adding $c$ to it cannot make it homogeneous because the new $\low(c')$ can at best be moved down to $\low(c)$ which keeps $c'$ non-homogeneous.
\item Above observations mean that no changes are required for the connection matrix due to additions of $c$ to $c'$.
\end{itemize}
    \item It may happen that by bringing $c$ to $M$, some other column $c'$ becomes homogeneous. Identify $c'$ and add $c'$ to all columns conflicting with it. Using the same argument as above, one can show that no more additions are necessary. Again, take out $c'$ and its target from the connection matrix.
\end{itemize}
(iii): Exchange column $c$ with the column $c'$ immediate to its right but within the same Morse set.\\
{\bf Case a}: If $c$ is non-homogeneous, nothing needs to be done.\\
{\bf Case b}: If $c$ is homogeneous and $c$ was not added to $c'$, nothing needs to be done.\\
{\bf Case c}: If $c$ is homogeneous and $c$ was added to $c'$.
\begin{itemize}
    \item Add $c$ to $c'$. No change in connection matrix. We claim that no more
column additions are required. Suppose that there is a homogeneous column
$b$ that needs to be added to $c'$ now after $c$ is added to $c'$. In that case, $c$ would have a $1$ in the row $\low(b)$. But, then $b$ should have been added to $c$ to eliminate its $1$ in the row $\low(b)$, a contradiction.
\end{itemize}
In all cases, we have at most $O(n)$ column additions each requiring at most $O(n)$ time giving us an $O(n^2)$ time complexity.}
}

\section{Persistence of Conley complex and Morse decomposition} \label{sec:persistence}
In this section, given a suitable function $f:K\rightarrow \mathbb{R}$, first we define a concept of persistence for Conley complexes induced by a Morse decomposition $\mathcal M_{\mathcal V}$ of a vector field $\mathcal V$ on $K$. Then, we derive that
this persistence is the same for all Conley complexes for $\mathcal M_{\mathcal V}$. Thus, the
persistence of any Conley complex induced by $\mathcal M_{\mathcal V}$ defines the persistence of $\mathcal M_{\mathcal V}$ under $f$.

\cancel{
    In order to have a persistence of the Conley complex we need to assume that combinatorial (multi)vector field comes with a function $f$ defined on the simplicial complex $K$. (i) For Forman gradient vector field, we
    assume the discrete Morse function with simplices in the doubleton having the same value. (ii) For multivector field in general, we assume that the function values for all simplices in a Morse set are the same.
 }   
Let $\md_{\mathcal V}=\{M_p\,|\, p\in P\}$ be a Morse decomposition indexed by a poset $\mathbb{P}=(P,\leq_P)$ for a (multi)vector field $\mv$ on a simplicial complex $K$. 
Let $(C_{\md_{\mathcal V}}, d_{\md_{\mathcal V}})$ be the $\mathbb{P}$-filtered chain complex
given by the Morse decomposition $\md_{\mathcal V}$ and $(\bar{C}_{\md_{\mathcal V}},\bar{d}_{\md_{\mathcal V}})$ be one of its
Conley complexes as defined in Section~\ref{sec:concomplex}. 

\begin{definition}
    We say a function $f: K\rightarrow \mathbb{R}$ is $\md_\mv$-Lyapunov if (i) for every $p\in P$,
    and every pair $\sigma_1,\sigma_2\in M_p$, $f(\sigma_1)=f(\sigma_2)$, (ii) for any pair 
    $p \leq_{P} q$, $\sigma\in M_p$ and $\sigma'\in M_q$ imply $f(\sigma)\leq f(\sigma')$.
\end{definition}
\begin{definition}
Any function $f:K\rightarrow \mathbb{R}$ induces a \emph{chain function} $f_C: C\rightarrow \mathbb{R}$
on any chain complex $(C,d)$ defined with simplices in $K$ by the assignment that,
for a chain $c\in C$, we have
$f_C(c)=\max_{\sigma\in {\mathrm supp}(c)} f(\sigma)$. 
\label{def:chainf}
\end{definition}
Let $f$ be a
$\md_\mv$-Lyapunov function and $(C,d):=(C_{\md_\mv},d_{\md_\mv})$. It follows
that $f_C(c)$ is the same for every $c\in C_p$, $\forall p\in P$.
The following is obvious from the definitions.
\begin{prop}
$p\leq_{P} q \implies f_C(C_p) \leq f_C(C_q)$.
\label{prop:inducedf}
\end{prop}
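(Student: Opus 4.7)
The plan is to unpack the definition of $f_C$ on each graded piece $C_p$ and then read off the inequality directly from the Lyapunov condition (ii). The only subtlety is the interpretation of the notation $f_C(C_p)$, which, in view of the remark immediately preceding the proposition, refers to the common value that $f_C$ takes on every nonzero chain in $C_p$.

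First I would observe that for any nonzero chain $c \in C_p$, every simplex $\sigma \in \mathrm{supp}(c)$ lies in $M_p$, since $C_p$ is generated by elementary chains of simplices of $M_p$. By Lyapunov condition (i), $f$ is constant on $M_p$, so write $v_p$ for this common value; then $f_C(c) = \max_{\sigma \in \mathrm{supp}(c)} f(\sigma) = v_p$, independently of the choice of nonzero $c \in C_p$. This is exactly the content of the remark stated just before the proposition, and it lets me set $f_C(C_p) := v_p$.

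Next, assume $p \leq_P q$. Choose any $\sigma \in M_p$ and any $\sigma' \in M_q$ (both Morse sets are nonempty in the decomposition). By Lyapunov condition (ii), $f(\sigma) \leq f(\sigma')$, hence $v_p \leq v_q$, giving $f_C(C_p) \leq f_C(C_q)$.

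There is essentially no obstacle here: the result is a direct translation of the two defining clauses of an $\md_\mv$-Lyapunov function into the language of the induced chain function $f_C$. If one wanted to be pedantic about the edge case $c = 0$, one could either restrict $f_C$ to nonzero chains or adopt the convention $f_C(0) = -\infty$; in either case, the conclusion $f_C(C_p) \leq f_C(C_q)$ remains valid and unchanged.
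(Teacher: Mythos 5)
Your proof is correct and follows essentially the same route as the paper's: both arguments reduce $f_C(C_p)$ to the common value of $f$ on $M_p$ (the paper via a single elementary chain $\langle\sigma\rangle$, you via condition (i) applied to all of $\mathrm{supp}(c)$) and then invoke Lyapunov condition (ii). Your treatment of the well-definedness of $f_C(C_p)$ and the $c=0$ edge case is slightly more careful than the paper's, but the substance is identical.
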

\begin{proof}
    The chain space $C_p$ for any $p$ must contain the elementary chain
    $\langle \sigma\rangle$ for a simplex $\sigma \in M_p$. Then, 
    $f_C(C_p)=f(\sigma\in M_p)\leq f(\tau\in M_q)=f_C(C_q)$.
    \qed
\end{proof}
Proposition~\ref{prop:inducedf} allows us to introduce the following definition.
\begin{definition}
Let $m=|P|$. Then, a total order
$p_1\leq p_2\leq \cdots\leq p_m$ that
extends the poset $P$
satisfying  
$f_C(C_{p_i})\leq f_C(C_{p_j})$ for $i\leq j$ is called an $f$-compatible
order $P_f$.
\end{definition}

Consider an $f$-compatible order $P_f: p_0\leq \cdots\leq p_m$ and the chain complex $(C,d):=(C_{\md_\mv},d_{\md_\mv})$. For each $p_i\in P$, consider the chain space $\CC_{p_i} := \oplus_{j\leq i}{C}_{p_j}$ with $f(\CC_{p_i}):=f_C(C_{p_i})$
and the chain complex $(\CC_{p_i},\oplus_{j\leq i} d_{p_ip_j})$ which is the restriction of $(C,d)$ on the
downset of $p_i$ in $P_f$. Clearly, $\CC_{p_i}$ is a subspace of $\CC_{p_j}$ where $p_i\leq_{P_f} p_j$, giving
us a filtration
\begin{equation*}
(\FF,P_f): \CC_{p_0}\subset \CC_{p_1}\subset\cdots\subset\CC_{p_m}
\end{equation*}
Similarly, define the filtration derived from a Conley complex $(\bar{C}, \bar{d})$
\begin{equation*}
(\bar\FF,P_f): \bar\CC_{p_0}\subset \bar\CC_{p_1}\subset\cdots\subset\bar\CC_{p_m}
\end{equation*}
where 
$\bar\CC_{p_i}:=\oplus_{j\leq i}\bar{C}_{p_j}$ and 
$(\bar\CC_{p_i},\oplus_{j\leq i}\bar{d}_{p_ip_j})$ is
a chain complex for each $p_i\in P$.

We obtain persistence modules
by taking the homology functor:
\begin{equation*}
H({\FF},P_f): H_*(\CC_{p_0})\rightarrow H_*(\CC_{p_1})\rightarrow\cdots\rightarrow H_*(\CC_{p_m})
\end{equation*}

\begin{equation*}
H(\bar{\FF},P_f): H_*(\bar\CC_{p_0})\rightarrow H_*(\bar\CC_{p_1})\rightarrow\cdots\rightarrow H_*(\bar\CC_{p_m})
\end{equation*}
The persistence $\pers(\FF,P_f)$ and $\pers(\bar\FF, P_f)$ given by the above persistence modules are the persistence of the chain complexes $(C,d)$ and $(\bar{C},\bar{d})$ respectively w.r.t. the linearization $P_f$. 
\cancel{
We observe that the above persistence can be computed from the
original input filtered boundary matrix $A$ from which we obtained the connection matrix.
It turns out that a complete reduction of $A$ indeed provides the persistence
of the Conley complex. First, similar to the chain complex
$(\bar\CC_{p_i},\oplus_{j\leq i} \bar{d}_{p_ip_j})$, define the chain complex
$(\CC_{p_i},\oplus_{j\leq i} {d}_{p_ip_j})$
from the chain complex $(C,d)$.
}
We observe that two persistence modules $H(\FF,P_f)$  and $H(\bar\FF,P_f)$
have pointwise isomorphic vector spaces because of homotopy equivalences.
\begin{prop}
    $\CC_{p}\cong \bar{\CC}_p$ and hence $H_*(\CC_p)\cong H_*(\bar{\CC}_p)$ for every $p\in P_f$.
    \label{prop:pwise}
\end{prop}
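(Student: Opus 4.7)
The plan is to exploit the defining property of the Conley complex, namely that $(C,d)$ and $(\bar{C},\bar{d})$ are $\PP$-filtered chain homotopic, together with the fact that $P_f$ is a linear extension of $\leq_P$. This will imply that both the connecting filtered chain maps and the filtered chain homotopies restrict to the downset subcomplexes $\CC_p$ and $\bar{\CC}_p$.

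First, I would establish a general restriction lemma: for any $\PP$-filtered linear map $h\colon V\to V'$ between $P$-graded vector spaces and any $p\in P_f$, we have $h(\CC_p)\subseteq \bar{\CC}_p$, where $\CC_p := \bigoplus_{r\leq_{P_f}p} V_r$ and $\bar{\CC}_p := \bigoplus_{r\leq_{P_f}p} V'_r$. The reason is transparent: if $h_{qr}\neq 0$ then $q\leq_P r$, and since $P_f$ extends $\leq_P$, we get $q\leq_{P_f} r\leq_{P_f} p$; hence $h(V_r)\subseteq \bar{\CC}_p$ for every $r\leq_{P_f} p$.

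Next, by Definition of a Conley complex there exist filtered chain maps $\varphi\colon(C,d)\to(\bar{C},\bar{d})$ and $\varphi'\colon(\bar{C},\bar{d})\to(C,d)$ and filtered chain homotopies $S\colon C\to C$ and $\bar{S}\colon \bar{C}\to \bar{C}$ satisfying $\varphi'\varphi - \id_C = dS+Sd$ and $\varphi\varphi' - \id_{\bar{C}} = \bar{d}\bar{S}+\bar{S}\bar{d}$. Applying the restriction lemma to $\varphi$, $\varphi'$, $S$, $\bar{S}$ (and to the filtered boundaries $d$, $\bar{d}$ themselves), all of these maps restrict to the downset subcomplexes, yielding chain maps $\varphi|_{\CC_p}\colon\CC_p\to \bar{\CC}_p$ and $\varphi'|_{\bar{\CC}_p}\colon \bar{\CC}_p\to\CC_p$ together with chain homotopies $S|_{\CC_p}$ and $\bar{S}|_{\bar{\CC}_p}$ that still witness $\varphi'|_{\bar{\CC}_p}\circ\varphi|_{\CC_p}\simeq \id_{\CC_p}$ and $\varphi|_{\CC_p}\circ\varphi'|_{\bar{\CC}_p}\simeq\id_{\bar{\CC}_p}$. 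This shows that $\CC_p$ and $\bar{\CC}_p$ are chain homotopy equivalent, which is how the isomorphism symbol $\cong$ in the statement should be read; applying the homology functor then gives $H_*(\CC_p)\cong H_*(\bar{\CC}_p)$.

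The only subtle point, and the one I would be most careful about, is verifying that the linear extension $P_f$ really does preserve the downset structure of all the maps involved. Once the restriction lemma is set up cleanly, everything else is formal. No additional induction or computation is required; the statement is essentially a compatibility check between the notions of $\PP$-filtration and the $P_f$-indexed filtration $\FF$.
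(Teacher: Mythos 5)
Your proposal is correct and matches the argument the paper intends: the paper offers no written proof of this proposition, only the remark that the pointwise isomorphisms hold ``because of homotopy equivalences,'' and your restriction lemma (filtered maps and filtered homotopies send $V_r$ into $\bigoplus_{q\leq_P r}V'_q$, hence preserve every downset of the linear extension $P_f$) is exactly the missing detail. Your caveat that $\cong$ must be read as chain homotopy equivalence rather than vector-space isomorphism is also the right reading, since $\CC_p$ and $\bar{\CC}_p$ generally have different dimensions.
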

Proposition~\ref{prop:pwise} leads to the isomorphism of the two modules.
\begin{prop}
   $H(\FF,P_f)\cong H(\bar\FF, P_f)$. 
\end{prop}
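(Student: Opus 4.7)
The plan is to promote the pointwise isomorphism of the previous proposition to an isomorphism of persistence modules by exploiting the fact that the Conley complex $(\bar C,\bar d)$ is obtained from $(C,d)$ via a \emph{filtered} chain homotopy equivalence, and that filtered maps automatically restrict to every downset of $P_f$.

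By definition of Conley complex, there are $\PP$-filtered chain maps $\varphi:(C,d)\to(\bar C,\bar d)$ and $\psi:(\bar C,\bar d)\to(C,d)$ together with $\PP$-filtered chain homotopies $S,\bar S$ with $\psi\varphi-\id_C=dS+Sd$ and $\varphi\psi-\id_{\bar C}=\bar d\bar S+\bar S\bar d$. The first step is to observe that since $P_f$ is a linear extension of $P$, any downset $D_i:=\{p_0,\ldots,p_i\}$ in $P_f$ is also a downset in $P$. A $\PP$-filtered map $h$ satisfies $h_{pq}\neq 0 \implies p\leq_P q \implies p\leq_{P_f}q$, so for $q\in D_i$ every nonzero image component $h_{pq}$ lies in $D_i$ as well. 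Hence $\varphi$, $\psi$, $S$, and $\bar S$ all restrict to maps between $\CC_{p_i}$ and $\bar\CC_{p_i}$, yielding a chain homotopy equivalence between $(\CC_{p_i},d|)$ and $(\bar\CC_{p_i},\bar d|)$ for every $i$.

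The second step is to pass to homology. The restrictions $\varphi|_{\CC_{p_i}}$ induce isomorphisms $\varphi_*^{(i)}:H_*(\CC_{p_i})\to H_*(\bar\CC_{p_i})$, giving the pointwise isomorphism already asserted. The key point is naturality: the restriction of $\varphi$ to $\CC_{p_i}$ is literally the composition of the inclusion $\CC_{p_i}\hookrightarrow\CC_{p_{i+1}}$ with $\varphi|_{\CC_{p_{i+1}}}$, because $\varphi$ itself is a single globally defined map. Applying the homology functor therefore yields a commutative square
\[
\begin{array}{ccc}
H_*(\CC_{p_i}) & \longrightarrow & H_*(\CC_{p_{i+1}}) \\
\downarrow\varphi_*^{(i)} & & \downarrow\varphi_*^{(i+1)} \\
H_*(\bar\CC_{p_i}) & \longrightarrow & H_*(\bar\CC_{p_{i+1}})
\end{array}
\]
for every $i$, where the horizontal maps are induced by the canonical inclusions defining the filtrations $(\FF,P_f)$ and $(\bar\FF,P_f)$.

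Concatenating these commuting squares along the entire filtration gives a natural transformation $\{\varphi_*^{(i)}\}$ between the persistence modules $H(\FF,P_f)$ and $H(\bar\FF,P_f)$ whose components are all isomorphisms; hence $H(\FF,P_f)\cong H(\bar\FF,P_f)$ as persistence modules. The one delicate point, and the only nontrivial step, is checking that "filtered" in the poset $\PP$ forces invariance under the linearized downsets $D_i$, which as noted above follows from $P_f$ being a linear extension of $\leq_P$; everything else is naturality of homology applied to restrictions of a single filtered chain homotopy equivalence.
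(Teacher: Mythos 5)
Your proof is correct and follows essentially the same route as the paper: pointwise isomorphisms from Proposition~\ref{prop:pwise} assembled into a commuting ladder over the inclusions of the filtration. You in fact supply the detail the paper leaves implicit --- that the vertical isomorphisms are restrictions of one globally defined filtered chain equivalence (well-defined because prefixes of the linear extension are downsets of $P$), which is exactly why the squares commute.
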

\begin{proof}
    We observe that the following diagram commutes. This is because
    the vertical morphisms
    are isomorphisms due to Proposition~\ref{prop:pwise} and the
    horizontal morphisms are induced by inclusions in chain complexes.
\begin{equation*}
\xymatrix
{
H\FF:\ar@{=}[d]^{\simeq} & \homo_*(\CC_{p_0}) \ar@{->}[r] \ar@{=}[d]^{\simeq}
& \ \homo_*(\CC_{p_1}) \  \ar@{->}[r]\ar@{=}[d]^{\simeq}
& \ \homo_*(\CC_{p_2}) \  \ar@{->}[r]\ar@{=}[d]^{\simeq}
& \ \ldots \ldots \ar[r] \
& \  \homo_*(\CC_{p_n}) \ar@{=}[d]^{\simeq}
\\
H\bar\FF: & \homo_*(\bar\CC_{p_0}) \ar@{->}[r]
& \ \homo_*(\bar\CC_{p_1})\  \ar[r]
& \ \homo_*(\bar\CC_{p_2})\ \ar@{->}[r]
& \ \ldots \ldots\ \ar[r]  
& \ \homo_*(\bar\CC_{p_n})
}
\label{eq:isomorph}
\end{equation*}
\end{proof}

Let $K_i:=\sqcup_{j\leq i} M_{p_j}$ be the subcomplex of the input complex $K$
containing simplices in every Morse set $M_{p_j}$ for $j\leq i$. Letting
$C(K_i)$ denote the chain group of $K_i$, we get the following proposition.
\begin{prop}
    Chain complexes $(C(K_i),\oplus_{j\leq i} d_{p_ip_j})$ and $(\CC_{p_i},\oplus_{j\leq i} {d}_{p_ip_j})$
    are identical.
    \label{prop:chaincomplex}
\end{prop}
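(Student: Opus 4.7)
The plan is to verify that the two chain complexes agree on generators, on the direct-sum decomposition, and on boundary maps. The only nontrivial point is to check that $K_i$ is actually a simplicial subcomplex, after which the identification becomes essentially tautological.

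First, I would verify that $K_i=\sqcup_{j\leq i}M_{p_j}$ is closed under taking faces. Pick $\sigma\in M_{p_j}$ with $j\leq i$ and a face $\tau\leq \sigma$. Then $\tau\in\cl(\sigma)\subseteq F_\mv(\sigma)$, so the two-element sequence $\sigma,\tau$ is a path. Letting $\tau\in M_q$ for the unique $q\in P$, the Morse decomposition condition (Definition~\ref{def:ms_and_md}) gives $q\leq_P p_j$. Because $P_f$ is a linear extension of $P$, writing $q=p_k$ we have $k\leq j\leq i$, hence $\tau\in K_i$. Thus $K_i$ is closed and so is a genuine subcomplex.

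Next, I would identify the underlying chain groups. The elementary chains $\{\langle\sigma\rangle\}_{\sigma\in K_i}$ form a basis of $C(K_i)$. Since the union $K_i=\sqcup_{j\leq i}M_{p_j}$ is disjoint, this basis splits according to which Morse set each simplex belongs to, yielding
\[
C(K_i)=\bigoplus_{j\leq i}\mathrm{span}\{\langle\sigma\rangle:\sigma\in M_{p_j}\}=\bigoplus_{j\leq i}C_{p_j}=\CC_{p_i},
\]
by the very definition of $C_{p_j}$ (Section~\ref{sec:prelim}) and of $\CC_{p_i}$.

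Finally, I would match the boundary maps. Under the canonical basis identification above, the simplicial boundary on $C(K_i)$ sends $\langle\sigma\rangle$ for $\sigma\in M_{p_j}$ to $\partial\sigma=\sum_{\tau\leq\sigma,\,\dim\tau=\dim\sigma-1}\langle\tau\rangle$, and every such face $\tau$ lies in some $M_{p_k}$ with $k\leq j\leq i$ by the argument of the first paragraph. Hence the restriction of $d$ to $C(K_i)$ lands in $\CC_{p_i}$ and its $(p_k,p_j)$-block coincides with $d_{p_kp_j}=\pi^{C}_{p_k}\circ d\circ \iota^{C}_{p_j}$. Thus the boundary on $C(K_i)$ equals $\bigoplus_{k,j\leq i}d_{p_kp_j}$, which is the same operator denoted $\bigoplus_{j\leq i}d_{p_ip_j}$ in the excerpt (the notation being shorthand for the restriction of $d$ to the downset $\{p_j:j\leq i\}$). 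The two chain complexes are therefore identical, not merely isomorphic.

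The only real obstacle is the closure argument of the first paragraph, and it is handled cleanly by combining the definition of $F_\mv$ (which always contains the closure) with the poset condition on Morse paths and the fact that $P_f$ extends $\leq_P$.
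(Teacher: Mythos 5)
Your proof is correct. The paper states this proposition without any proof, treating it as immediate from the definitions; your verification --- in particular the check that $K_i$ is closed under taking faces, via the path condition of Definition~\ref{def:ms_and_md} combined with the fact that $P_f$ linearly extends $\leq_P$ --- is exactly the routine argument the authors leave implicit, and the rest (matching bases and boundary blocks) is the tautological identification they intend.
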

We observe that $K_i\subseteq K_j$ for $i\leq j$ giving us a filtration
of chain spaces and the corresponding persistence module
\begin{eqnarray*}
(\mathbf{F},P_f)&:& C(K_0)\subset C(K_1)\subset\cdots\subset C(K_m)\\
H(\mathbf{F},P_f)&:& \homo_*(C(K_0))\rightarrow \homo_*(C(K_1))\rightarrow\cdots\rightarrow \homo_*(C(K_m))
\end{eqnarray*}

Proposition~\ref{prop:chaincomplex} leads to the following isomorphic persistence
modules.
\begin{prop}
    $H(\CC,P_f)\cong H(\mathbf{F}, P_f)$.
\end{prop}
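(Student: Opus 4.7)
The plan is to derive the isomorphism directly from Proposition~\ref{prop:chaincomplex}, which already tells us that the chain complexes at each stage of the two filtrations are identical. So the main work is just to verify that the connecting (inclusion) maps also agree under this identification, after which applying the homology functor yields the desired isomorphism.

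First I would fix an index $i$ and invoke Proposition~\ref{prop:chaincomplex} to identify $(C(K_i),\oplus_{j\leq i} d_{p_ip_j})$ with $(\CC_{p_i},\oplus_{j\leq i} d_{p_ip_j})$ as chain complexes. The identification is natural: a basis of $C(K_i)$ is given by the elementary chains $\langle \sigma\rangle$ for $\sigma \in \sqcup_{j\leq i} M_{p_j}$, and these are exactly the basis elements of $\CC_{p_i} = \oplus_{j\leq i} C_{p_j}$ using the $P$-graded basis induced by the Morse decomposition. Under this identification, the boundary maps coincide because both are obtained by restricting $d_{\md_\mv}$ to the downset $\{p_0,\ldots,p_i\}$.

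Next I would check that the inclusion maps match up. For $i\leq j$, the inclusion $\CC_{p_i} \hookrightarrow \CC_{p_j}$ is the natural inclusion of direct sums, while the inclusion $C(K_i) \hookrightarrow C(K_j)$ is induced by $K_i \subseteq K_j$. Since both inclusions send $\langle \sigma\rangle$ to $\langle \sigma\rangle$ for every $\sigma\in K_i$, they agree under the identification of the previous paragraph. Thus $(\FF, P_f)$ and $(\mathbf{F},P_f)$ are the same filtered chain complex up to this canonical identification.

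Finally, applying the homology functor $H_*$ preserves this identification at every level and commutes with the inclusion-induced maps, giving a natural isomorphism between the persistence modules $H(\FF,P_f)$ and $H(\mathbf{F},P_f)$. There is no substantive obstacle here; the entire content of the proof is already packaged into Proposition~\ref{prop:chaincomplex}, and the only task is to observe that the identification is compatible with the filtration structure.
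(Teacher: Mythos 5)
Your proposal is correct and follows essentially the same route as the paper: both rest on the identification of chain complexes from Proposition~\ref{prop:chaincomplex} at each filtration level, together with the observation that the inclusion-induced horizontal maps commute with this identification, so that applying the homology functor yields isomorphic persistence modules. Your extra verification that both inclusions agree on elementary chains just makes explicit the commutativity of the squares that the paper asserts.
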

\begin{proof}
The claim follows from the fact that the following diagram commutes. 
This is because
    the vertical morphisms
    are isomorphisms due to Proposition~\ref{prop:chaincomplex} and the
    horizontal morphisms are induced by inclusions in chain spaces.
\begin{equation*}
\xymatrix
{
H\CC: \ar@{=}[d]^{\simeq} & \homo_*(\CC_{p_0}) \ar@{->}[r] \ar@{=}[d]^{\simeq}
& \ \homo_*(\CC_{p_1}) \  \ar@{->}[r]\ar@{=}[d]^{\simeq}
& \ \homo_*(\CC_{p_2}) \  \ar@{->}[r]\ar@{=}[d]^{\simeq}
& \ \ldots \ldots \ar[r] \
& \  \homo_*(\CC_{p_m}) \ar@{=}[d]^{\simeq}
\\
H\mathbf{F}:& \homo_*(C(K_0)) \ar@{->}[r]
& \ \homo_*(C(K_1))\  \ar[r]
& \ \homo_*(C(K_2))\ \ar@{->}[r]
& \ \ldots \ldots\ \ar[r]  
& \ \homo_*(C(K_m))
}
\label{eq:isomorph2}
\end{equation*}
\end{proof}
So, we have
\begin{prop}
    $H(\bar\CC,P_f)\cong H(\CC,P_f) \cong H(\mathbf{F},P_f)$.
    \label{prop:equivalence}
\end{prop}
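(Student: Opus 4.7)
The plan is to obtain the statement as an immediate composition of the two isomorphisms established in the two preceding propositions. Observe that the filtration $(\FF,P_f)$ and the filtration $(\CC,P_f)$ denote the same sequence of chain subspaces $\CC_{p_0}\subset\cdots\subset \CC_{p_m}$ (with the usual slight abuse of notation between the filtration and the induced persistence module $H\CC$). So one isomorphism, $H(\CC,P_f)\cong H(\mathbf{F},P_f)$, is exactly the content of the proposition immediately above, whose diagram was shown to commute by Proposition~\ref{prop:chaincomplex}. The other isomorphism, $H(\bar\CC,P_f)\cong H(\CC,P_f)$, is exactly the earlier proposition whose commuting diagram was supplied using Proposition~\ref{prop:pwise}.

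The forward plan is therefore: first invoke the earlier proposition giving $H(\bar\FF,P_f)\cong H(\FF,P_f)$ and rename $\FF$ as $\CC$, since by construction these are literally the same filtration of chain spaces; then invoke the proposition just above to get $H(\CC,P_f)\cong H(\mathbf{F},P_f)$. Composing the two natural isomorphisms of persistence modules yields $H(\bar\CC,P_f)\cong H(\mathbf{F},P_f)$, which together with the first isomorphism establishes the chain $H(\bar\CC,P_f)\cong H(\CC,P_f)\cong H(\mathbf{F},P_f)$ claimed in the statement.

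There is essentially no obstacle: both constituent isomorphisms are already established, and composition of isomorphisms of persistence modules is functorial. The only bookkeeping step is to be explicit that the square-by-square isomorphisms from the two previous commuting ladders can be vertically concatenated into a single commuting ladder linking $H\bar\CC$ to $H\mathbf{F}$, so that the composite is still a natural isomorphism of persistence modules (and not merely a pointwise one). This follows because both vertical families of isomorphisms were induced by chain-level identifications compatible with the inclusion maps at each stage, so their composition commutes with the horizontal inclusion-induced maps.
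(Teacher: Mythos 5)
Your proposal is correct and matches the paper's treatment: the paper gives no explicit proof, presenting the proposition (via ``So, we have'') as the immediate composition of the two preceding isomorphisms $H(\FF,P_f)\cong H(\bar\FF,P_f)$ and $H(\CC,P_f)\cong H(\mathbf{F},P_f)$, exactly as you do. Your added remark that $\FF$ and $\CC$ (respectively $\bar\FF$ and $\bar\CC$) denote the same filtration, and that the two commuting ladders concatenate vertically, is just a careful spelling-out of the bookkeeping the paper leaves implicit.
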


Now consider two $f$-compatible linearizations $P_f$ and $P'_f$ of the poset $P$ and the two corresponding filtrations $(\mathbf{F},P_f)$ and $(\mathbf{F}',P'_f)$. If $f$ takes different
values on Morse sets, we have a unique linear order $P_f=P_{f'}$. Non-uniqueness is only
possible when two or more Morse sets have the same $f$-value. However, in such a case,
reordering the simplices with the same $f$-value to obtain different filtration
does not affect the persistence diagrams, see e.g.~\cite{DW22,EH2010}. Hence we have:
\begin{prop}
    $\pers(H(\mathbf{F},P_f))=\pers(H(\mathbf{F}',P'_f))$.
    \label{prop:filtrations}
\end{prop}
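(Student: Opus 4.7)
The plan is to reduce the claim to the standard fact that the persistence diagram of a filtration is insensitive to the ordering of entities sharing the same function value. First I would observe that $f$-compatibility of a linear extension $P_f$ of $(P,\leq_P)$ forces a completely determined relative order on Morse sets whose $f$-values differ, so any two $f$-compatible linearizations $P_f$ and $P'_f$ agree on the coarser partition of $P$ obtained by grouping Morse sets with the same $f$-value. The only freedom is therefore a permutation within each such $f$-level, and within a level the Morse sets being permuted must be pairwise $\leq_P$-incomparable (otherwise the poset order would force them into a fixed sequence).

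Second, I would argue that any two permutations of a fixed $f$-level block are connected by a sequence of adjacent transpositions of incomparable Morse sets. Hence it suffices to show that swapping two consecutive Morse sets $M_p$ and $M_q$ with $f_C(C_p)=f_C(C_q)$ and $p,q$ incomparable in $\leq_P$ yields an isomorphic persistence module, since the proposition then follows by composing such swaps.

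Third, for one such swap I would invoke the sublevel-set viewpoint. For every $r\in\mathbb{R}$, the sublevel chain space
\[
\CC^{\leq r} \;:=\; \bigoplus_{p\in P,\, f_C(C_p)\leq r} C_p
\]
is independent of the choice of $f$-compatible linear extension, because it is determined only by the values of $f$. Both discrete filtrations $(\mathbf{F},P_f)$ and $(\mathbf{F}',P'_f)$ are refinements of the same $\mathbb{R}$-indexed sublevel filtration $\{\CC^{\leq r}\}_r$: at consecutive indices where the $f$-value strictly increases, both pass through the identical chain complex; at consecutive indices within a single $f$-level, they visit different intermediate chain complexes but start and end at the same one. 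Applying the homology functor and invoking the standard result (as in~\cite{EH2010, DW22}) that different refinements of a common $\mathbb{R}$-indexed filtration induce isomorphic persistence modules and hence the same persistence diagram, we conclude that $\pers(H(\mathbf{F},P_f))=\pers(H(\mathbf{F}',P'_f))$.

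The step I expect to require the most care is the sublevel-set argument, since one must check that the intermediate chain complexes at a fixed $f$-value (before all Morse sets at that level have been added) do not contribute spurious persistence pairs that could differ between the two orderings. This is handled by the standard transposition lemma: any bar that is born and dies strictly within the processing of a single $f$-level corresponds to a simultaneous birth--death at the same function value, and thus contributes a zero-persistence point that is not recorded in the diagram, irrespective of which intermediate ordering produced it. The remaining bars have birth and death at values determined only by $f$ and are identical for both linearizations.
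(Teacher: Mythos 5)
Your proposal is correct and follows essentially the same route as the paper, which simply observes that two $f$-compatible linearizations can differ only in the ordering of Morse sets sharing an $f$-value and then cites the standard fact (e.g.\ \cite{DW22,EH2010}) that reordering entities with equal function values does not change the persistence diagram. Your write-up merely fleshes out that citation with the adjacent-transposition and common-sublevel-filtration details, which is a reasonable elaboration but not a different argument.
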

Proposition~\ref{prop:equivalence} and Proposition~\ref{prop:filtrations} 
imply the following result.
\begin{theorem}
    $\pers(H(\bar\CC,P_f))=\pers(H(\mathbf{F},P_f))=\pers(H(\mathbf{F}',P'_f))=\pers(H(\bar\CC',P'_f)$
    \label{thm:equiv}
\end{theorem}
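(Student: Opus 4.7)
The plan is to obtain the chain of equalities by concatenating the two preceding propositions, using the standard fact that isomorphic persistence modules yield identical persistence diagrams. There is no new technical content to produce; all the work has been set up already.

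Concretely, I would proceed as follows. First, apply Proposition~\ref{prop:equivalence} to the unprimed linearization $P_f$, which gives $H(\bar\CC,P_f)\cong H(\mathbf{F},P_f)$. Since pointwise isomorphisms commuting with the structure maps preserve persistence barcodes, this yields
\[
\pers(H(\bar\CC,P_f))=\pers(H(\mathbf{F},P_f)).
\]
Second, invoke Proposition~\ref{prop:filtrations} to get $\pers(H(\mathbf{F},P_f))=\pers(H(\mathbf{F}',P'_f))$. Finally, apply Proposition~\ref{prop:equivalence} a second time but now to the primed linearization $P'_f$, which gives $H(\mathbf{F}',P'_f)\cong H(\bar\CC',P'_f)$ and hence
\[
\pers(H(\mathbf{F}',P'_f))=\pers(H(\bar\CC',P'_f)).
\]
Chaining these three equalities produces exactly the statement of the theorem.

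The only genuinely delicate point is ensuring that Proposition~\ref{prop:equivalence} really does give isomorphism at the level of persistence modules, not merely a pointwise vector-space isomorphism. The commutative diagram used to prove Proposition~\ref{prop:equivalence} is precisely what is needed: the vertical arrows are isomorphisms and horizontal arrows are induced by inclusions, so the natural transformation is actually an isomorphism of persistence modules. Once this is acknowledged, the proof is a one-line concatenation, which matches the informal statement that the theorem is ``immediate''. The final write-up will therefore be short, essentially just citing Propositions~\ref{prop:equivalence} and~\ref{prop:filtrations} and invoking invariance of persistence diagrams under isomorphism.
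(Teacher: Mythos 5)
Your proposal matches the paper's own argument: the theorem is derived exactly by chaining Proposition~\ref{prop:equivalence} (applied to each of $P_f$ and $P'_f$) with Proposition~\ref{prop:filtrations}, using that isomorphic persistence modules have equal persistence. Your added remark that the isomorphisms are genuinely module-level (via the commuting diagrams with inclusion-induced horizontal maps) is precisely the justification the paper relies on, so the approaches coincide.
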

Theorem~\ref{thm:equiv} implies three things: \textbf{(i)} the persistence of every Conley complex for a Morse decomposition $\md_\mv$ under a given $\md_\mv$-Lyapunov function $f$ remains the same and thus
can be taken as the persistence of $\md_\mv$ under $f$, \textbf{(ii)} this
persistence can be computed by computing the persistence given by a
filtration of $K$ that is induced by $f$, \textbf{(iii)} this persistence is stable under perturbation of the
function $f$ in the infinity norm because
the bottleneck distance $d_B(\pers(H(\mathbf{F},P_f),\pers(H(\mathbf{G},P_g))\leq \|f-g\|_\infty$
by a well known result in TDA~\cite{CEH2007}.

\textbf{Computation:} Recall that the input for computing
a Conley complex is a $P$-filtered matrix $A$. We also make it
$P_f$-filtered for a linearization $P_f:p_0\leq p_1\leq \cdots\leq p_m$ of $P$
in the following sense.
Let columns $k$ and $k'$ represent simplices in $M_{p_i}$ and $M_{p_j}$ respectively.
Then $i\leq j \implies k \leq k'$. The rows are also ordered similarly.
In other words, all columns and rows for simplices in $M_{p_i}$ come before the columns and rows respectively for simplices in $M_{p_j}$ if $i\leq j$. Columns and rows for simplices within each
Morse set are ordered arbitrarily but respecting the non-decreasing order of dimensions.

    Next, compute the ordinary persistence of this filtered matrix $A$ by a complete reduction. Retain only the non-homogeneous columns and zeroed out columns that are not targeted (just like the output rule of our connection matrix computation). These reduced columns constitute the persistence pairing of $f$; see Appendix~\ref{sec:continuous} for an implementation.

\cancel{
\begin{conjecture}
    Let $f_1$ and $f_2$ be two functions compatible with the input vector field. Then, the connection matrices remain the same under fixed orders of simplices within each Morse sets: follows from the fact that arranging incomparable Morse sets in any order does not change the connection matrix. We could not make similar statements for Conley complex persistence.
\end{conjecture}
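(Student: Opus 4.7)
The conjecture has two parts: a positive invariance statement for connection matrices across compatible functions, and a negative statement that no analogous invariance holds for Conley complex persistence. I would address them separately.

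For the positive part, the plan is to reduce directly to Theorem~\ref{thm:morsefixed}. Two ``compatible'' functions $f_1, f_2$, interpreted as $\md_\mv$-Lyapunov in the sense of Section~\ref{sec:persistence}, induce two $f_i$-compatible linear extensions $P_{f_1}$ and $P_{f_2}$ of the underlying poset $P$. Given a common, fixed ordering of simplices inside each Morse set, the two resulting filtered input matrices $A_{f_1}$ and $A_{f_2}$ can differ only in the relative placement of Morse sets that are incomparable in $P$ and happen to be ordered oppositely by $f_1$ and $f_2$. By construction $A_{f_1}$ and $A_{f_2}$ are then Morse fixed, so Theorem~\ref{thm:morsefixed} applies and yields $S_{f_1}[i,j]=S_{f_2}[i',j']$ whenever $i=_\sigma i'$ and $j=_\sigma j'$. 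This is precisely the invariance claimed, and it formalizes the conjecture's own parenthetical hint that ``arranging incomparable Morse sets in any order does not change the connection matrix.''

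For the negative part, I would exhibit a small explicit counterexample. The strategy is to pick a multivector field whose Morse decomposition contains two incomparable Morse sets $M_p, M_q$ and two $\md_\mv$-Lyapunov functions $f_1, f_2$ with $f_1(M_p) < f_1(M_q)$ but $f_2(M_p) > f_2(M_q)$. The filtrations $(\mathbf{F},P_{f_1})$ and $(\mathbf{F},P_{f_2})$ of Section~\ref{sec:persistence} then introduce the simplices of $M_p$ before those of $M_q$ in one case and after in the other, producing distinct birth-death pairings when a homology class born in one Morse set is killed only after simplices of the other are included. By Proposition~\ref{prop:equivalence} the difference at the chain level transfers to differing persistence diagrams of any Conley complex, so $\pers(H(\bar\CC,P_{f_1}))\neq\pers(H(\bar\CC,P_{f_2}))$ in this example, confirming that no analogue of the positive assertion holds for persistence.

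The positive part is a one-line appeal to Theorem~\ref{thm:morsefixed}. The main obstacle is the negative part: constructing a genuinely discriminating counterexample, that is, one in which swapping the Lyapunov order changes the multiset of birth-death intervals themselves rather than merely relabelling them. I would search among the smallest vector fields with two incomparable Morse sets --- for instance, two disjoint repellers later connected through a common saddle--attractor pair, or two equidimensional periodic orbits cobounding an annulus --- where the two Lyapunov orderings produce two persistence diagrams that differ in at least one interval length. Once such an example is in hand, the verification itself is a direct calculation using the standard persistence algorithm on the two orderings.
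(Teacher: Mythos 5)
Your positive half is exactly the argument the paper intends. Note first that this ``conjecture'' survives in the source only as a cancelled remark whose entire justification is the clause ``follows from the fact that arranging incomparable Morse sets in any order does not change the connection matrix,'' i.e., a reduction to Theorem~\ref{thm:morsefixed}. Your observation that two $\md_\mv$-Lyapunov functions $f_1,f_2$ induce two linear extensions $P_{f_1},P_{f_2}$ of $P$, so that under a common within-Morse-set order the two input matrices are Morse fixed and Proposition~\ref{prop:morsefixed} together with Theorem~\ref{thm:morsefixed} give $S_{f_1}[i,j]=S_{f_2}[i',j']$ for $i=_\sigma i'$, $j=_\sigma j'$ (equivalently $R^T S R=S'$), is precisely that reduction. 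No gap there.

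The negative half is where you go astray, for two reasons. First, ``we could not make similar statements for Conley complex persistence'' is not an assertion that persistence differs; it is an admission that the authors have no analogous invariance theorem, so it carries no proof obligation, and certainly not one requiring a ``genuinely discriminating counterexample.'' Second, your counterexample program conflates two phenomena. If $f_1\neq f_2$ as functions, the persistence diagrams differ trivially because their interval endpoints are function values (take $f_2=2f_1$), and the paper's substitute for invariance is exactly the stability bound $d_B\leq\|f_1-f_2\|_\infty$ stated after Theorem~\ref{thm:equiv}. If instead you intend $f_1$ and $f_2$ to differ only in how they linearize incomparable Morse sets of equal filtration value, then Proposition~\ref{prop:filtrations} and Theorem~\ref{thm:equiv} prove that the persistence \emph{is} invariant under that choice, so the example you propose to search for cannot exist. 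The correct reading is that the connection matrix depends only on order data (hence the invariance via Theorem~\ref{thm:morsefixed}), whereas persistence additionally depends on the numerical values of the Lyapunov function, for which only stability, not invariance, can be asserted.
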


\subsection{Stability}
\tamal{This needs to be re-written. It is now obvious.}
Observe that the persistence diagram of the Conley complex is given by the pairing in the non-homogeneous columns of the reduced original boundary matrix. The homogeneous columns and targetted columns are left out. However, those columns gives pairing within Morse sets and thus resulting in zero persistence. This implies that the persistence diagram of the original filetered matrix coincides with the persistence diagram of the Conley complex.
}
\section{Experimental results and implementation}

\subsection{Empirical comparison of {\sc ConnectMat} and {\sc ConMat}} \label{sec:benchmark}

We implement algorithms {\sc ConnectMat} and {\sc ConMat} described in~\cite{DLMS24} and this work respectively. Advances made in the development of {\sc ConMat} eliminate one major obstacle in the use of existing TDA software to compute connection matrices: the interleaving of row and column operations on an input boundary matrix. However, despite this improvement {\sc ConMat} retains a dependency on column homogeneity, prompting us to implement both algorithms from the ground up.

To test performance, we generate seven multivector fields $\mv_1, \ldots, \mv_7$ over simplicial complexes $K_1, \ldots, K_7$ respectively. With the exception of $\mv_7$ and $K_7$, all field/complex pairs are generated via the following procedure: first, create a simplicial complex of the desired size; second, initialize the \textit{least coarse} multivector field over the complex, where each simplex is alone in a singleton vector; third, \textit{coarsen} the field by selecting vectors uniformly at random to merge (resolving convexity issues if necessary) until a desired \textit{connection probability} is reached - that is, a probability that given two simplicies, they belong to the same vector.
    \begin{itemize}
        \item $K_1$ consists of a single twelve-dimensional simplex, all of its faces, their faces, and so on. $\mv_1$ is a multivector field over this complex with connection probability $p \approx 0.459700$. This probability is notably higher than that of the other multivector fields. To understand why, suppose that in the creation of $\mv_1$ multivector field $\mv_1^i$ is coarsened to $\mv_1^{i+1}$ via taking the union of vectors $v$ and $v'$. Due to the density of $K_1$, there is a relatively high probability that simplex $\sigma \in v$ has a high dimensional co-face $\sigma' \in v'$, and thus that there are a large number of simplices $\tau$ where $\sigma \leq \tau \leq \sigma'$; each $[\tau]_{\mv_1^{i}}$ must also be merged with $v$ and $v'$, and this procedure repeated recursively to remove convexity violations.

        \item $K_2$ consists of a high density assortment of triangles. $\mv_2$ is a multivector field over this complex with connection probability $ \approx 0.063600$.
        \item $K_3$ is equal to $K_2$. $\mv_3$ is a multivector field over this complex with connection probability $ \approx 0.096700$
        \item $K_4$ consists of several hundred vertices, and approximately one-hundred-thousand edges; in other words, it is an extremely high density graph. $\mv_4$ is a multivector field over this complex with connection probability $ \approx 0.030900$
        
        \item $K_5$ consist of a large assortment of three, four, and five dimensional simplices and their faces. $\mv_5$ is a multivector field over this complex with connection probability $ \approx 0.000010$.
        \item $K_6$ is equal to $K_5$. $\mv_6$ is a multivector field over this complex with connection probability $ \approx 0.000030$.

        \item $K_7$ is the geometric simplicial complex described in the Appendix~\ref{sec:example_large}. Similarly, $\mv_7$ is created via the discretization process described in Appendix~\ref{sec:continuous}, and has connection probability $  \approx 0.038229$.
    \end{itemize}

For each pair $\mv_i$/$K_i$ we compute the minimum Morse decomposition $\md_{\mv_i} = \sqcup M_p$, and from this a graded boundary operator $\partial: \oplus_pC_*(M_p)\rightarrow \oplus_qC_*(M_q)$ is used to construct a filtered boundary matrix $A$. We input each $A$ to both algorithms, and time the computations, with results summarized in Table~\ref{tab:benchmarks}. Column two gives the number of simplices in each complex, while column three gives the maximum and average simplex dimensions. Column four gives the connection probability described above. Column five gives the runtime of each algorithm. Column six gives the approximate speed-up achieved in moving from {\sc ConnectMat} to {\sc ConMat}; i.e., the nearest integer to the ratio given in the prior column.

\begin{table}[H]
\centering
\begin{tabular}{
>{\raggedright\arraybackslash}p{.5cm} 
>{\raggedright\arraybackslash}p{1.25cm} 
>{\raggedright\arraybackslash}p{2.5cm} 
>{\raggedright\arraybackslash}p{1.5cm} 
>{\raggedright\arraybackslash}p{4cm} 
>{\raggedleft\arraybackslash}p{1.5cm}
}
\hline
$\mv_i$ & $|K_i|$ & Max / Avg Dim & Probability & {\sc ConnectMat} / {\sc ConMat} & Speed Up \\
\hline
$\mv_1$ & \texttt{008,192} & \texttt{12 / 5.500} & \texttt{0.459700} & \texttt{00h00m09s / 00h00m01s} & $\approx $\texttt{  009} \\
$\mv_2$ & \texttt{026,541} & \texttt{02 / 1.940} & \texttt{0.063600} & \texttt{00h23m48s / 00h00m02s} & $\approx $\texttt{  714} \\
$\mv_3$ & \texttt{026,541} & \texttt{02 / 1.940} & \texttt{0.096700} & \texttt{00h22m44s / 00h00m02s} & $\approx $\texttt{  682} \\
$\mv_4$ & \texttt{101,056} & \texttt{01 / 0.990} & \texttt{0.030900} & \texttt{01h09m13s / 00h01m10s} & $\approx $\texttt{  059} \\
$\mv_5$ & \texttt{055,843} & \texttt{05 / 2.744} & \texttt{0.000010} & \texttt{06h24m10s / 00h02m28s} & $\approx $\texttt{  156} \\
$\mv_6$ & \texttt{055,843} & \texttt{05 / 2.744} & \texttt{0.000030} & \texttt{20h18m20s / 00h01m51s} & $\approx $\texttt{  653} \\
$\mv_7$ & \texttt{002,481} & \texttt{02 / 1.146} & \texttt{0.038229} & \texttt{531.209ms / 049.710ms} & $\approx $\texttt{  011} \\
\hline
\end{tabular}
\caption{Runtime and Speed Up for Fields $\mv_1, \ldots, \mv_7$. All tests were run on a laptop with an Intel Xeon E-2276 CPU @ 2.80GHz, 32GB RAM, and Linux operating system.} \label{tab:benchmarks}
\end{table}

We observe a significant speedup, often by several orders of magnitude, when applying {\sc ConMat} rather than {\sc ConnectMat}. This can be attributed to several factors. A greater number of operations are necessary to execute {\sc ConnectMat}, where column additions occur on both left and right, and each column addition requires a corresponding row addition. Further, these row additions have several second-order effects which preclude or complicate the use of optimizations available in {\sc ConMat}. These effects are discussed in the following paragraph.

Consider columns $i,j$ where $i<j$. {\sc ConnectMat} may, during its execution, add column $j$ to column $i$, thus triggering the addition of row $i$ to row $j$. We refer to such an operation as a \textit{downward addition}, as a low index row is added to a higher index row, below it in the matrix. Downward additions cause two primary problems. First, it is expedient to track the pivot row of each column, and a downward addition can potentially modify the pivot row of every column, causing each to be recomputed. Second, a downward addition can break the upper triangularity of the input matrix, which in {\sc ConMat} is always maintained and allows one to store and operate only on the upper right of the input matrix.

Another difficulty presented by row additions involves the structure of the input matrix. In {\sc ConMat}, where only matrix columns must be iterated over, it is an obvious choice to store columns contiguously in memory. The row additions in {\sc ConnectMat} require iterating over rows, and thus prompt the question of whether to store matrix rows or columns contiguously.

\subsection{Discretization and persistence of Morse decompositions} \label{sec:continuous}

Let $\md_\mv$ be a Morse decomposition of a multivector field $\mv$ over a simplicial complex $K$. Section~\ref{sec:persistence} describes a notion of persistence of $\md_\mv$ with respect to an $\md_\mv$-Lyapunov function $f:K\to \mathbb{R}$. In this section, we experiment with this notion of persistence through the discretization of a continuous vector field.

\begin{observation}\label{obs:downset-lyapunov}
    Let $\mv$ be a multivector field on simplicial complex $K$, and $\md_\mv = \{M_p\,|\, p\in \Poset\}$ a Morse decomposition of this field. Let $[\sigma]_{\md_\mv} = M_p$, where $M_p$ is the unique Morse set in $\md_\mv$ which contains $\sigma$. For any Morse set $M_p \in \md_\mv$, let $D_{M_p} = \cup_{q\leq p}M_q$. The \emph{downset function} $f^{\md_\mv}: K \to \mathbb{R}$ is given by $f(\sigma) = |D_{[\sigma]_{\md_\mv}}|$. This function is $\md_\mv$-Lyapunov.
\end{observation}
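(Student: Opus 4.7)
The plan is to verify directly that the downset function $f^{\md_\mv}$ satisfies the two defining properties of an $\md_\mv$-Lyapunov function from the definition in Section~\ref{sec:persistence}: constancy on each Morse set, and monotonicity along the poset order on $P$.

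For the first property, I would simply observe that $f^{\md_\mv}(\sigma)$ depends on $\sigma$ only through $[\sigma]_{\md_\mv}$. Concretely, if $\sigma_1,\sigma_2\in M_p$ then $[\sigma_1]_{\md_\mv}=[\sigma_2]_{\md_\mv}=M_p$, so $D_{[\sigma_1]_{\md_\mv}}=D_{[\sigma_2]_{\md_\mv}}=D_{M_p}$ and hence $f^{\md_\mv}(\sigma_1)=|D_{M_p}|=f^{\md_\mv}(\sigma_2)$. This is immediate from the definition, no dynamics needed.

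For the second property, I would argue set-theoretically. If $p\leq_P q$ then for every $r\in P$ with $r\leq_P p$ we also have $r\leq_P q$ by transitivity, so $\{r\in P\mid r\leq_P p\}\subseteq \{r\in P\mid r\leq_P q\}$, which gives
\[
D_{M_p}=\bigcup_{r\leq_P p}M_r\;\subseteq\;\bigcup_{r\leq_P q}M_r=D_{M_q}.
\]
Because $\md_\mv$ is a partition of $K$, distinct Morse sets are disjoint, so this set inclusion translates directly into the cardinality inequality $|D_{M_p}|\leq |D_{M_q}|$. Thus for $\sigma\in M_p$ and $\sigma'\in M_q$ we get $f^{\md_\mv}(\sigma)=|D_{M_p}|\leq |D_{M_q}|=f^{\md_\mv}(\sigma')$, as required.

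There is no genuine obstacle here; the only thing one must be careful about is that the definition uses the \emph{strict} downset $\{q\mid q\leq p\}$ versus its strict variant, and that the finiteness of $K$ (hence of $P$) is being used implicitly to ensure $|D_{M_p}|$ is a well-defined real number. Since the excerpt restricts throughout to finite simplicial complexes and finite Morse decompositions, both issues are automatic, so the observation reduces to the two one-line verifications above.
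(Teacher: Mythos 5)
Your proposal is correct and follows essentially the same route as the paper's own justification: constancy on each Morse set because $f^{\md_\mv}$ factors through $[\sigma]_{\md_\mv}$, and monotonicity from the inclusion $D_{M_p}\subseteq D_{M_q}$ for $p\leq_P q$. The only difference is that you spell out the transitivity and cardinality steps that the paper leaves implicit (and note that disjointness of the partition is not actually needed for $|D_{M_p}|\leq|D_{M_q}|$ --- monotonicity of cardinality under inclusion suffices).
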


Observation~\ref{obs:downset-lyapunov} is easily justified. Let $\md_\mv=\{M_p\,|\, p\in P\}$ be any Morse decomposition of multivector field $\mv$. For all $p \in P$ and $\sigma_1, \sigma_2 \in M_p$, we observe $D_{[\sigma_1]_{\md_\mv}} = D_{[\sigma_2]_{\md_\mv}}$ and thus $f^{\md_\mv}(\sigma_1) = f^{\md_\mv}(\sigma_2)$. Additionally, for pair $p\leq_P q$, and simplices $\sigma \in M_p, \sigma' \in M_q$, it is clear that $D_{[\sigma]_{\md_\mv}} \subseteq D_{[\sigma']_{\md_\mv}}$ and thus $f^{\md_\mv}(\sigma) \leq f^{\md_\mv}(\sigma')$.

In subsections [\ref{sec:example_small}, \ref{sec:example_large}] we make use of the downset function to describe persistence of Morse decompositions of multivector fields. Let $(\bar{C}, \bar{d}) := (\bar{C}_{\md_{\mathcal V}},\bar{d}_{\md_{\mathcal V}})$ be a Conley complex induced by the finest Morse decomposition $\md_\mv$ of a multivector field $\mv$ over a simplicial complex $K$. From here on, we take $\md_\mv$ to be minimum and write $f^\mv:=f^{\md_\mv}$. Note that the minimum Morse decomposition of a multivector field $\mv$ may be computed through determining the strongly connected components of an appropriate graph representing $\mv$~\cite{DLMS24}.

The induced \textit{chain function} (see Definition~\ref{def:chainf}) $f^\mv_{\bar{C}}$ assigns to each chain $c \in\bar{C}$ the maximal number of simplices reachable from a single simplex in the support of $c$. This captures, in a sense, the quantity of flow outward from a chain. Cycles with high persistence are boundaries of important sources of flow within the field. Throughout both examples, we consider the continuous vector field shown in Figure~\ref{fig:field_sample}.

\begin{figure}[htbp]
\centerline{\includegraphics[width=.85\textwidth, keepaspectratio]{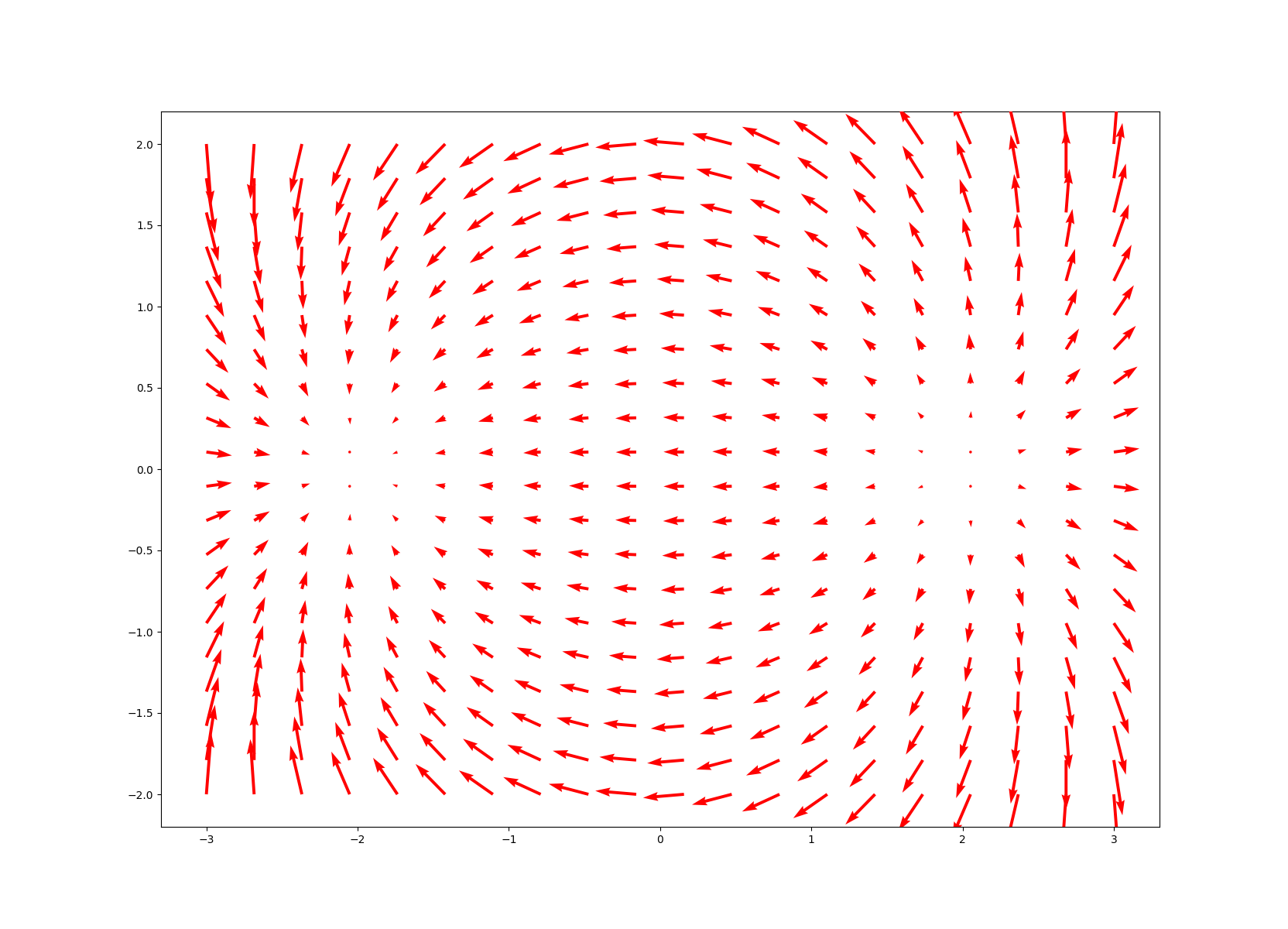}}
\caption{A discrete sample of the vector field determined by the function $g: \mathbb{R}^2 \to \mathbb{R}^2$ where $g(\langle x, y \rangle) = \langle x^2 - y^2 - 4, 2xy \rangle$ over the region $[-3, 3] \times [-2, 2] \subset \mathbb{R}^2$. Observe two critical points: a repeller to the right of the origin at $\langle 2, 0 \rangle$, and an attractor to the left at $\langle -2, 0 \rangle$.}
\label{fig:field_sample}
\end{figure}

\subsubsection{Small example} \label{sec:example_small}

\begin{figure}[htbp]
\centerline{\includegraphics[width=.585\textwidth, keepaspectratio]{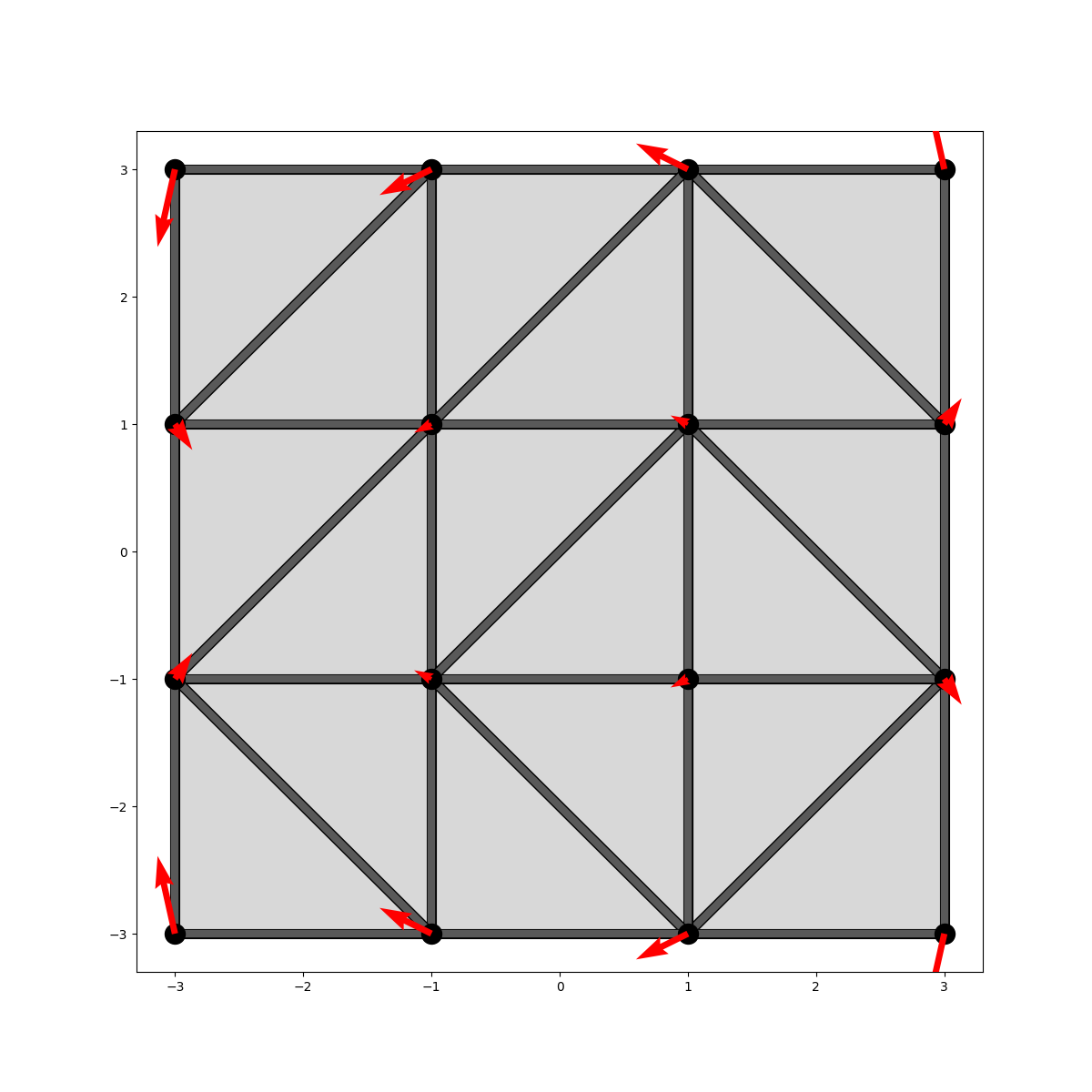}}
\caption{A coarse sample of the field determined by $g$ mentioned in Figure~\ref{fig:field_sample} upon a geometric simplicial complex.}
\label{fig:small_discretization}
\end{figure}

We use a coarse sample of the field $g$ in Figure~\ref{fig:field_sample} over the region $[-3,3]^2 \subset \mathbb{R}^2$ to illustrate the discretization process. We begin by, as is shown in Figure~\ref{fig:small_discretization}, creating a geometric simplicial complex whose vertices coincide with points at which the field is sampled. Multivectors are formed as follows. \textbf{(i)} Initialize the least coarse multivector field over the complex, where each simplex is alone in a singleton vector. \textbf{(ii)} For each vertex with a non-zero sample-vector, determine which coface - if any - of the vertex this sample-vector points into and merge its discrete vector with that of the vertex. \textbf{(iii)} For each edge consider the pseudo sample-vector formed by averaging the sample-vectors at its endpoints, and add the discrete vector of the edge to the discrete vector of the triangle this pseudo sample-vector points into when placed at the edge's midpoint. \textbf{(iv)} Merge the discrete vectors of all vertices and edges along the exterior of the discretization region. Denote this field $\mv$, and the underlying complex $K$.

We compute the minimum Morse decomposition $\md_\mv$ of $\mv$, and the $\md_\mv$-Lyapunov downset function $f^{\mv}$ described in Observation~\ref{obs:downset-lyapunov}. We then apply the procedure described at the end of Section~\ref{sec:persistence}, arriving at a barcode diagram with respect to $f^{\mv}$ of a Conley complex induced by $\md_\mv$, denoted $(\bar{C}, \bar{d}) := (\bar{C}_{\md_{\mathcal V}},\bar{d}_{\md_{\mathcal V}})$. Figure~\ref{fig:small_final} (left) shows the minimum Morse decomposition of $\mv$; simplices which belong to the same discrete vector are assigned the same color. Figure~\ref{fig:small_final} (center) shows a representation of $(\bar{C}, \bar{d})$. The chains corresponding to the columns in $A_{out}$ that are retained in the connection matrix form a basis for $\bar{C}$. Here simplices belonging to the same basis element in $\bar{C}$ are assigned the same color, with one exception: the simplex in $K$ which corresponds to the column of this basis element in the initial input matrix. Notice that this simplex completes (comes last within) the chain basis element. Paths, shown as thin black arrows, represent boundary relations within $(\bar{C}, \bar{d})$; i.e. non-zero entries in the connection matrix. Figure~\ref{fig:small_final} (right) shows the barcode diagram of $(\bar{C}, \bar{d})$ under $f^{\mv}$. The lengths of bars are plotted logarithmically. We observe two short bars corresponding to cycles which become boundaries when the small green and blue chains are introduced, and a long orange bar which corresponds to the cycle which becomes a boundary when the large orange chain is introduced.

\begin{figure}[htbp]
\centerline{\includegraphics[width=1.3\textwidth, keepaspectratio]{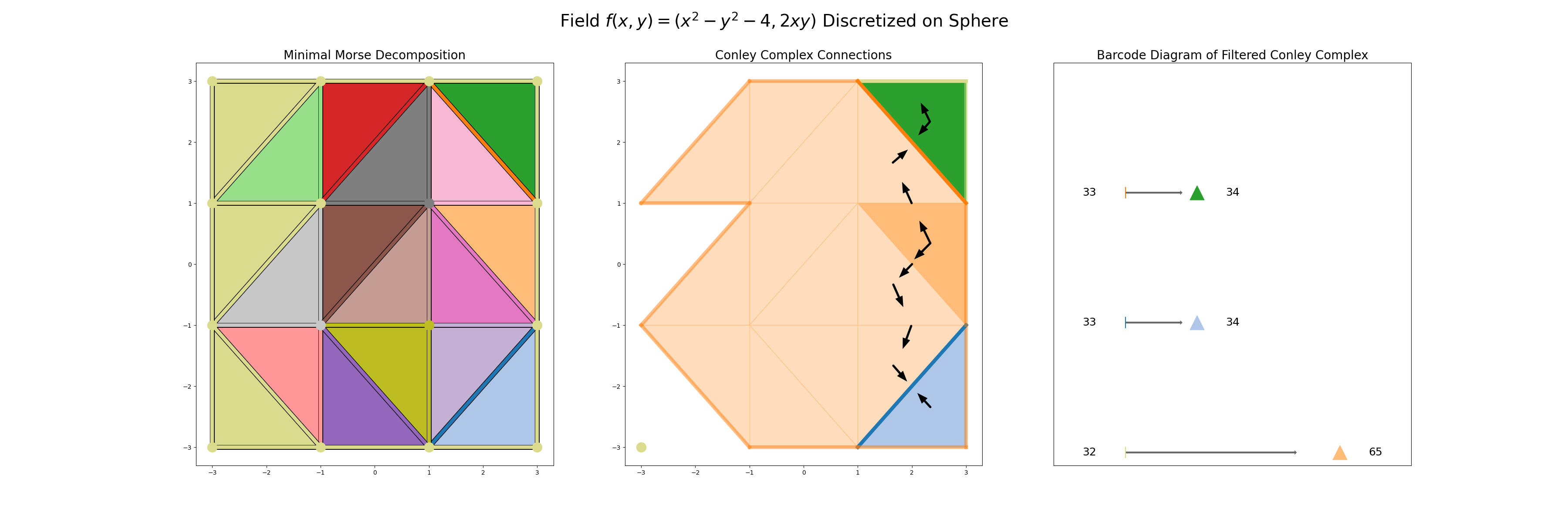}}
\caption{(left) Minimum Morse decomposition of the field $\mv$ - denoted $\md_\mv$, (center) A representation of a Conley complex induced by $\md_\mv$, and (right) the barcode diagram of this Conley complex with respect to the downset function $f^{\mv}$ (Observation~\ref{obs:downset-lyapunov}).}
\label{fig:small_final}
\end{figure}

\subsubsection{Large example} \label{sec:example_large}

We repeat the procedure described in Section~\ref{sec:example_small} with finer sample: our discretization of the field described by function $g$ now consists of a square grid of $21^2$ sample points over the region $[-3,3]^2 \subset \mathbb{R}^2$. Figure~\ref{fig:large_final1} (center) shows a Conley complex dominated by two large repellers, with flows terminating at the attractor described in the caption of Figure~\ref{fig:field_sample}. Figure~\ref{fig:large_final1} (right) shows a persistence diagram with two bars, corresponding to the cycles which encircle these repellers. Observe that both discretizations at coarser and finer levels bring in extra repellers in addition to the original one for the vector field shown in Figure~\ref{fig:field_sample}. However, finer resolution reduces the error by reducing the number of repellers from three to two. At higher resolution, we discern properties of the field more accurately. 

\begin{figure}[htbp]
\centerline{\includegraphics[width=1.3\textwidth, keepaspectratio]{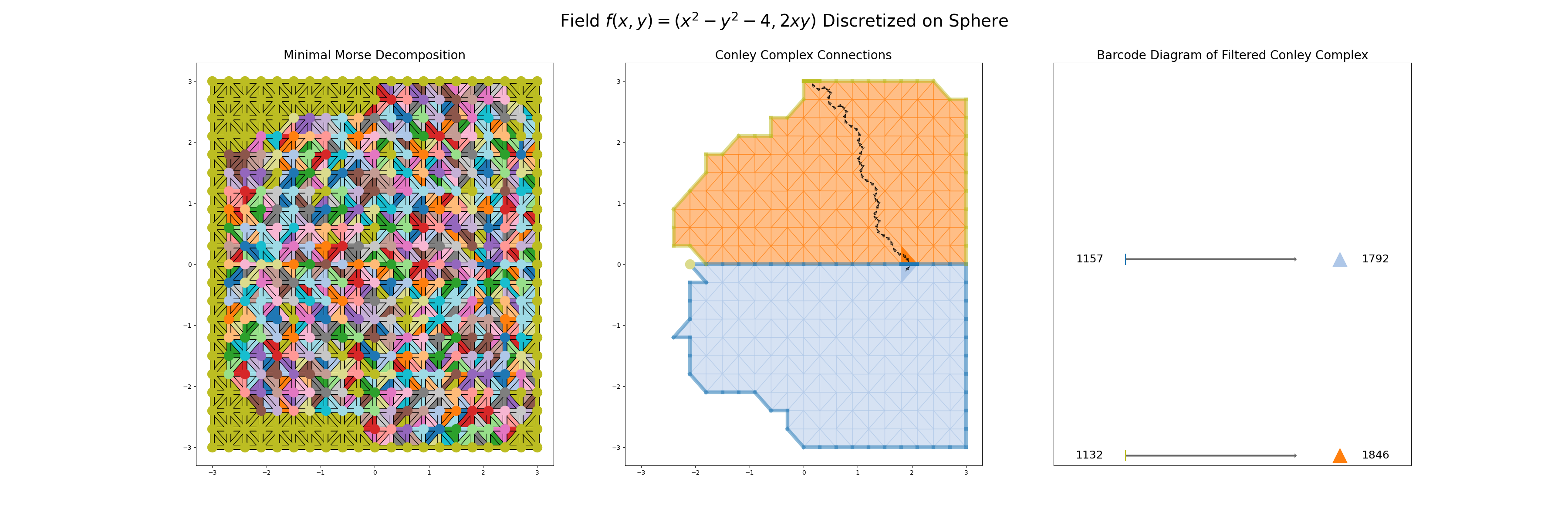}}
\caption{(left) Minimum Morse decomposition of the field $\mv$, (center) A representation of a Conley complex induced by $\mv$, and (right) the barcode diagram of this Conley complex under downset function $f^{\mv}$ (Observation~\ref{obs:downset-lyapunov}).}
\label{fig:large_final1}
\end{figure}

 \section{Conclusions} We have shown that a variation of the well known persistence algorithm with exhaustive reductions produces a connection matrix for a given
    Morse decomposition of a combinatorial (multi)vector field. The implications are twofold: first, it brings
    the theory of combinatorial dynamics closer to the theory of persistence; second,
    it allows a more efficient way of computing connection matrices compared
    to the best known algorithm in~\cite{DLMS24} - see our experimental results in section~\ref{sec:benchmark}. Because of the restrictions of homogeneity on column additions, well-known optimization techniques~\cite{bauer2014clear,wagner2012} for persistence algorithms could not be used
    in our experiments. Future work can be directed to address this shortcoming. It is also worth pursuing further application of the persistence of Lyapunov functions on Morse decompositions of (multi)vector fields. For example, it may aid in classifications of (discrete)Morse functions and the flows which they induce.

    \section*{Acknowledgment} This research is supported by the NSF grants DMS-2301360 and CCF-2437030.
    This project has received funding from the European Union’s Horizon 2020 research and innovation programme under the Marie Skłodowska-Curie Grant Agreement No. 101034413.
    Also, we acknowledge many helpful comments and discussions with Marian Mrozek.

\bibliography{refs}


\end{document}